\documentclass{amsart}
\usepackage{graphicx}
\usepackage{amssymb,amsthm,amstext,amsmath,amscd,latexsym,amsfonts,amscd,enumerate}
\usepackage{epstopdf}
\DeclareGraphicsRule{.tif}{png}{.png}{`convert #1 `dirname #1`/`basename #1 .tif`.png}
\usepackage[all,cmtip]{xy}
\usepackage[hidelinks=true]{hyperref}
\usepackage[czech,english]{babel}

\usepackage{mathrsfs}

\newcommand{\fa}{\frak{a}}

\newcommand{\fm}{\frak{m}}
\newcommand{\fn}{\frak{n}}
\newcommand{\fp}{\frak{p}}
\newcommand{\fq}{\frak{q}}

\newcommand{\NN}{{\mathbb{N}}}

\newcommand{\ZZ}{\mathbb{Z}}


\newcommand{\PE}{\operatorname{PE}}

\def\RHom{\operatorname{RHom}}

\def\LLambda{\operatorname{{\operatorname{L}}\Lambda}}

\def\depth{\operatorname{depth}}
\def\Lotimes{\otimes^{\operatorname{L}}}

\def\im{\operatorname{im}}

\def\coker{\operatorname{coker}}
\def\ker{\operatorname{ker}}
\def\id{\operatorname{id}}

\def\pd{\operatorname{pd}}

\def\fd{\operatorname{fd}}
\def\id{\operatorname{id}}
\def\pid{\operatorname{p.id}}

\def\invlim{\varprojlim}

\def\fm{\mathfrak m}
\def\Spec{\operatorname{Spec}}

\def\supp{\operatorname{supp}}
\def\cosupp{\operatorname{cosupp}}
\def\Hom{\operatorname{Hom}}

\def\Ext{\operatorname{Ext}}
\def\Tor{\operatorname{Tor}}

\def\cone{\operatorname{cone}}

\def\K{\mathsf{K}}
\def\C{\mathsf{C}}
\def\H{\operatorname{H}}
\def\D{\mathsf{D}}

\def\Flat{\operatorname{\mathsf{Flat}}}

\def\Mod{\mathsf{Mod}}


\newcommand{\tot}{\operatorname{tot}}
\newcommand{\Adelic}{\mathrm{A}}

\numberwithin{equation}{section}

\theoremstyle{plain}
\newtheorem{thm}[equation]{Theorem}          \newtheorem*{thm*}{Theorem}
\newtheorem{prp}[equation]{Proposition}      \newtheorem*{prp*}{Proposition}
\newtheorem{cor}[equation]{Corollary}        \newtheorem*{cor*}{Corollary}
\newtheorem{lem}[equation]{Lemma}            \newtheorem*{lem*}{Lemma}
          \newtheorem*{cnj*}{Conjecture}
\newtheorem{fct}[equation]{Fact}            \newtheorem*{fct*}{Fact}

\theoremstyle{definition}
\newtheorem{dfn}[equation]{Definition}       \newtheorem*{dfn*}{Definition}
\newtheorem{con}[equation]{Construction}     \newtheorem*{con*}{Construction}
     \newtheorem*{funcon*}{Functorial Constructions}
      \newtheorem*{obs*}{Observation}
\newtheorem{rmk}[equation]{Remark}           \newtheorem*{rmk*}{Remark}
\newtheorem{exa}[equation]{Example}          \newtheorem*{exa*}{Example}
         \newtheorem*{exe*}{Exercise}
\newtheorem{qst}[equation]{Question}         \newtheorem*{qst*}{Question}
            \newtheorem{stp*}{Setup}
            \newtheorem*{set*}{Setting}
            \newtheorem*{ntn*}{Notation}


\usepackage{xcolor}


\title[Minimal semi-flat-cotorsion replacements and cosupport]{Minimal semi-flat-cotorsion replacements\\ and cosupport}
\author{Tsutomu Nakamura}
\address[Tsutomu Nakamura]{Graduate School of Mathematics, Nagoya University, Furocho, Chikusaku, Nagoya 464-8602, Japan}
\email{tsutomu.nakamura@math.nagoya-u.ac.jp}

\author{Peder Thompson} 
\address[Peder Thompson]{Institutt for matematiske fag, Norwegian University of Science and Technology, N-7491 Trondheim, Norway}
\email{peder.thompson@ntnu.no}

\date{\today} 


\keywords{Minimal complex; flat cotorsion module; cosupport; finitistic dimension}

\subjclass[2010]{Primary 13D02. Secondary 13C13.}

\begin{document}
\maketitle

\begin{abstract}
Over a commutative noetherian ring $R$ of finite Krull dimension, we show that every complex of flat cotorsion $R$-modules decomposes as a direct sum of a minimal complex and a contractible complex. Moreover, we define the notion of a semi-flat-cotorsion complex as a special type of semi-flat complex, and provide functorial ways to construct a quasi-isomorphism from a semi-flat complex to a semi-flat-cotorsion complex. Consequently, every $R$-complex can be replaced by a minimal semi-flat-cotorsion complex in the derived category over $R$. Furthermore, we describe structure of semi-flat-cotorsion replacements, by which we recover classic theorems for finitistic dimensions. In addition, we improve some results on cosupport and give a cautionary example. We also explain that semi-flat-cotorsion replacements always exist and can be used to describe the derived category over any associative ring.
\end{abstract}

\section*{Introduction}
\noindent
The existence of injective envelopes for modules over any ring yields minimal injective resolutions; dually, in settings where projective covers exist---such as for finitely generated modules over a semi-perfect noetherian ring---one can build minimal projective resolutions. These classic forms of minimality are encompassed by the following definition: a complex is minimal if every self homotopy equivalence is an isomorphism; see Avramov and Martsinkovsky \cite{AM02}. In fact, Avramov, Foxby, and Halperin show \cite{AFH} that every complex of injective modules decomposes as a direct sum of a minimal complex and a contractible complex, see also Krause \cite{Kra05}, thus showing every complex has a minimal semi-injective resolution. A dual statement, considered initially by Eilenberg \cite{Eil56}, holds in settings where projective covers exist.

A natural question is whether a complex of flat modules exhibits similar behaviour.  Although flat covers do exist for modules over any ring, due to Bican, El Bashir, and Enochs \cite{BEBE01}, it turns out that minimality is poorly behaved for complexes of flat modules in general: indeed, there exist quasi-isomorphisms between minimal semi-flat complexes that are not isomorphisms of complexes (unlike the case for minimal semi-projective or semi-injective complexes), see for example Christensen and Thompson \cite{CT19}. We thus restrict our focus to complexes of a special type of flat modules: the flat cotorsion modules.

Let $R$ be a commutative noetherian ring. Enochs shows \cite{Eno84} that flat cotorsion $R$-modules---i.e., those flat modules that are also right Ext-orthogonal to flat modules---have a unique decomposition, whose structure is akin to that of injective modules over a noetherian ring as shown by Matlis \cite{Mat58}.  Further, minimality criteria for complexes of flat cotorsion $R$-modules was given by Thompson \cite{Tho19min}. One goal of this paper is to show that when $R$ has finite Krull dimension, such complexes can be decomposed analogously to complexes of injective modules:
\begin{thm*}[See \ref{minsplit} and \ref{mindecomposition}]
Assume $\dim R<\infty$. If $Y$ is a complex of flat cotorsion $R$-modules, then $Y=Y'\oplus Y''$, where $Y'$ is minimal and $Y''$ is contractible.
\end{thm*}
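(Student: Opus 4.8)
\smallskip
\noindent\emph{Proof sketch.}\ The plan is to follow the template of Avramov--Foxby--Halperin \cite{AFH} (see also Krause \cite{Kra05}) for complexes of injective modules, with Enochs' structure theorem \cite{Eno84} in place of Matlis' theorem and Thompson's criterion \cite{Tho19min} in place of the vanishing-of-Bass-numbers criterion for minimality. Recall that each term decomposes uniquely and functorially as $Y_n=\prod_{\fp\in\Spec R}T_\fp(Y_n)$, where $T_\fp(Y_n)$ is the $\fp R_\fp$-adic completion of a free $R_\fp$-module; thus $Y_n$ is assembled, prime by prime, from completed free modules over the complete local rings $\widehat{R_\fp}$, whose Krull dimensions are at most $\dim R$. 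A Krull-intersection argument shows $\Hom_R(T_\fp(M),T_\fq(N))=0$ unless $\fq\subseteq\fp$; consequently the differential of $Y$ never strictly raises height, the subproduct $Y^{\ge i}$ of $Y$ over primes of height $\ge i$ is naturally a quotient complex for each $i$ (with flat cotorsion terms, since flat cotorsion modules are closed under products over a noetherian ring), and $\Spec R$ is stratified into the $\dim R+1$ strata $\{\fp:\height\fp=i\}$. Finally, writing $\bar T_\fp(Y):=T_\fp(Y)\otimes_{R_\fp}k(\fp)$, a complex of $k(\fp)$-vector spaces, Thompson's criterion asserts that $Y$ is minimal exactly when every $\bar T_\fp(Y)$ has zero differential.

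The central step is a splitting lemma: \emph{a non-minimal $Y$ has a nonzero contractible direct summand}. Granting the criterion, there are $\fp$ and $n$ for which $\bar T_\fp(Y_n)\to\bar T_\fp(Y_{n-1})$ is nonzero; I would pick a basis vector $e$ of $\bar T_\fp(Y_n)$ with nonzero image and lift it to $\tilde e\in T_\fp(Y_n)$, so that $\widehat{R_\fp}\tilde e$ is a free rank-one summand of $T_\fp(Y_n)$, hence of $Y_n$. The $\fp$-component of $\partial_n(\tilde e)$ is nonzero modulo $\fp$, so $\partial_n(\tilde e)$ has a coordinate (in the obvious free summand) that is a unit of the local ring $\widehat{R_\fp}$; therefore $\widehat{R_\fp}\!\cdot\!\partial_n(\tilde e)$ is a free rank-one direct summand of $Y_{n-1}$ and $\partial_n$ restricts to an isomorphism $\widehat{R_\fp}\tilde e\xrightarrow{\,\cong\,}\widehat{R_\fp}\!\cdot\!\partial_n(\tilde e)$. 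Cancelling this block of the differential by Gaussian elimination exhibits a contractible ``disk'' $\cdots\to 0\to\widehat{R_\fp}\xrightarrow{\,\cong\,}\widehat{R_\fp}\to 0\to\cdots$ as a direct summand of $Y$, and the complementary complex again has flat cotorsion terms because direct summands of flat cotorsion modules are flat cotorsion. This proves the lemma.

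To assemble the decomposition I would process the strata from the top. Suppose inductively that $Y$ already has minimal height-$\ge i{+}1$ part (vacuous when $i=\dim R$). Decompose the quotient complex $Y^{\ge i}$ as $M\oplus C$ with $M$ minimal and $C$ contractible; since $Y^{\ge i+1}$ is minimal, $C$ is supported only at primes of height exactly $i$, and, being contractible, $C$ lifts along the degreewise-split surjection $Y\twoheadrightarrow Y^{\ge i}\twoheadrightarrow C$, so $C$ is a direct summand of $Y$. Replacing $Y$ by the complement---whose height-$\ge i$ part is now $M$---and descending through the finitely many values $i=\dim R,\dots,0$ yields $Y=Y'\oplus Y''$ with $Y''$ contractible and $Y'$ minimal; the finiteness of $\dim R$ is exactly what makes this induction terminate. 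The main obstacle will be the decomposition of $Y^{\ge i}$ used at each stage: using that $\Hom$ vanishes between $T_\fp$ and $T_{\fp'}$ for distinct primes of the same height, the differential of $Y^{\ge i}$ in the relevant part is block-diagonal over the height-$i$ primes, so this reduces to a single complete local ring $S=\widehat{R_\fp}$, where one must show that a complex of completed free $S$-modules splits as minimal plus contractible. Here one cannot simply peel off disks one at a time, since infinitely many may be needed and the naive ``limit'' misbehaves---this is precisely the pathology suffered by complexes of arbitrary flat modules---and it is the completeness, equivalently the cotorsion property, of the terms that should tame the relevant transfinite limits; alternatively, following \cite{AFH}, one builds compatible splittings of all the terms in a single pass.
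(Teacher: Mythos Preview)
Your overall strategy mirrors the paper's: stratify $\Spec R$ into finitely many layers, handle each layer by reducing to the local case over $\widehat{R_\fp}$, and reassemble. The paper stratifies by repeatedly peeling off $\max W$ and inducts on $\dim W$; you stratify by height, which also works since $\Hom_R(M,T_\fq)\cong\Hom_R(\Lambda^{\fq} M,T_\fq)$ and $\Lambda^{\fq}(\prod_{\height\fp<i}T_\fp)=0$ for $\height\fq\ge i$, so $Y^{<i}$ is a genuine subcomplex. You also correctly isolate the local case---a complex of completed free modules over a complete local ring---as the heart of the matter, and the paper proves precisely this via a Nakayama-type lifting lemma together with linear algebra over the residue field (Lemmas~\ref{nakayama} and~\ref{trivialsummand}), essentially the ``single pass'' you allude to.

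There is, however, a genuine gap in your reassembly. In the inductive step you write ``decompose $Y^{\ge i}$ as $M\oplus C$'', but this is exactly what you are trying to establish: having arranged $Y^{\ge i+1}$ minimal does not by itself hand you a decomposition of $Y^{\ge i}$. Your block-diagonality observation applies only to the subquotient $Y^{=i}=\ker(Y^{\ge i}\to Y^{\ge i+1})$, not to $Y^{\ge i}$ itself, since for a height-$i$ prime $\fq$ the $T_\fq$-component of the differential can receive contributions from every $T_\fp$ with $\fp\supseteq\fq$, including primes of height $>i$. Granting the local case one obtains $Y^{=i}=M'\oplus C'$ with $C'$ contractible; the missing step is to promote $C'$ from a summand of $Y^{=i}$ to a summand of $Y^{\ge i}$ (after which your lifting to $Y$ is fine). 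The paper supplies exactly this via pushout and pullback along the degreewise split sequence $0\to Y^{=i}\to Y^{\ge i}\to Y^{\ge i+1}\to 0$, invoking the lemma of Avramov--Martsinkovsky~\cite{AM02} that a degreewise split short exact sequence of complexes with contractible end term splits in $\C(R)$. Once that lemma is inserted your height-filtration argument goes through and is essentially a reindexed version of the paper's proof of Theorem~\ref{minsplit}.
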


In Section \ref{section_constructions}, we give two functorial approachs to construct a complex of flat cotorsion $R$-modules; one of them builds on work of Nakamura and Yoshino in \cite{NY18}, and the other is inspired by it. We also turn to considering semi-flat-cotorsion complexes, that is, semi-flat complexes of flat cotorsion $R$-modules, as well as replacements by such complexes in the derived category over $R$; see Appendix \ref{appendix_sfc}.  If $F$ is a semi-flat complex, then Constructions \ref{Dim1orCountable} and \ref{DimFiniteExists} yield functorial ways to build a semi-flat-cotorsion complex $Y$ and a quasi-isomorphism $F\to Y$. In particular, we obtain:
\begin{thm*}[See \ref{exists_semi}]
Assume $\dim R<\infty$. Every $R$-complex has a minimal semi-flat-cotorsion replacement in the derived category over $R$.
\end{thm*}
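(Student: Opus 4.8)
The plan is to assemble three ingredients, the last two of which are the technical heart of the paper. Start with an arbitrary $R$-complex $M$. First I would invoke the classical fact that $M$ admits a semi-flat resolution, i.e.\ a quasi-isomorphism $F\xrightarrow{\ \simeq\ }M$ with $F$ semi-flat (for instance, any semi-projective resolution works). Next, since $\dim R<\infty$, I would apply the functorial Construction \ref{DimFiniteExists} to the semi-flat complex $F$; it produces a semi-flat-cotorsion complex $Y$ together with a quasi-isomorphism $F\xrightarrow{\ \simeq\ }Y$. (For $\dim R\le 1$ or $R$ countable, Construction \ref{Dim1orCountable} could be used instead.) The roof $Y\xleftarrow{\ \simeq\ }F\xrightarrow{\ \simeq\ }M$ then exhibits an isomorphism $M\cong Y$ in $\D(R)$; that both $F$ and $Y$ are semi-flat, hence $\K$-flat, is what guarantees the roof is well-behaved.

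It remains to replace $Y$ by a minimal complex of the same kind. As $Y$ is in particular a complex of flat cotorsion $R$-modules and $\dim R<\infty$, the decomposition result (\ref{minsplit} and \ref{mindecomposition}) yields $Y=Y'\oplus Y''$ with $Y'$ minimal and $Y''$ contractible. The canonical projection $Y\to Y'$ has contractible kernel $Y''$, hence is a homotopy equivalence and in particular a quasi-isomorphism, so $Y'\simeq Y\cong M$ in $\D(R)$. I would then verify that $Y'$ is again a semi-flat-cotorsion complex: each $Y'_n$ is a direct summand of the flat cotorsion module $Y_n$, so is itself flat cotorsion, and $Y'$ is a direct summand of the semi-flat complex $Y$, so is semi-flat, since semi-flatness passes to direct summands. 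Thus $Y'$ is a minimal semi-flat-cotorsion complex isomorphic to $M$ in $\D(R)$, as required.

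The genuine difficulty is concentrated in the two inputs rather than in their combination: the decomposition theorem \ref{mindecomposition}, which is exactly where finiteness of the Krull dimension is used, and the passage from a semi-flat complex to a semi-flat-cotorsion one carried out in Section \ref{section_constructions}. Granting those, the only points needing attention in the proof above are the stability of the classes of semi-flat complexes and of flat cotorsion modules under direct summands, and the observation that splitting off a contractible summand does not disturb quasi-isomorphism type; both are routine. One can moreover note that the resulting replacement is unique up to isomorphism of complexes, by the uniqueness of minimal complexes of flat cotorsion modules, although the theorem as stated only claims existence.
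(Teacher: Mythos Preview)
Your proposal is correct and follows essentially the same approach as the paper: take a semi-flat resolution $F\to M$, apply Construction \ref{DimFiniteExists} to obtain a semi-flat-cotorsion complex $\Adelic^{\mathbb{W}}F$, then use Corollary \ref{mindecomposition} to split off a contractible summand and retain the minimal piece, noting it remains semi-flat as a direct summand. Your additional remarks on stability under direct summands and on uniqueness are accurate elaborations of points the paper leaves implicit or defers to Lemma \ref{sfc_unique}.
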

\noindent
Although it is immediate from \cite[Theorem 5.2]{Tho19min} that every $R$-module has a minimal semi-flat-cotorsion replacement without the assumption of finite Krull dimension, the assumption here is natural in considering unbounded complexes.
One motivation for our approach is that not every $R$-module admits a surjection from, or injection to, a flat cotorsion $R$-module---see Example \ref{no_res}---and so our method differs from the one for complexes of injective modules given in \cite[Appendix B]{Kra05}.

In Section \ref{section_AB}, we employ the functorial construction in Construction \ref{DimFiniteExists}, along with the Auslander--Buchsbaum formula, to describe the structure of semi-flat-cotorsion replacements; see Lemma \ref{lambdastructure} and Theorem \ref{structure}. In particular, this extends structure of the minimal pure-injective resolution of a flat module described by Enochs \cite{Eno87}, and also recovers---see Corollary \ref{FFD}---the fact that the finitistic flat dimension of $R$ is at most $\dim R$. In addition, this structure gives a new proof of a classic result of Gruson and Raynaud \cite{GR71} and Jensen \cite{Jen70}: an $R$-module of finite flat dimension has projective dimension at most $\dim R$, see Theorem \ref{FPD}; in particular, the finitistic projective dimension of $R$ is at most $\dim R$ and flat $R$-modules have projective dimension at most $\dim R$.

In Section \ref{section_cosupp}, we apply the other functorial construction, Construction \ref{Dim1orCountable}, in the context of cosupport. The cosupport of an $R$-complex $X$ is the set of prime ideals $\fp$ such that $\RHom_R(\kappa(\fp),X)$ is nontrivial in the derived category over $R$. As an analogue to work of Chen and Iyengar \cite{CI10}, we give in Example \ref{counterexample} an unbounded minimal complex $Y$ of flat cotorsion $R$-modules such that $\cosupp_RY$ is strictly contained in $\bigcup_{i\in \ZZ}\cosupp_RY^i$. This gives a counterexample to \cite[Theorem 2.7]{Tho18cosupp}, unfortunately, and we proceed to give a correction---and improvement---for this result; see Theorem \ref{correct}.

In the appendix, we define the notion of semi-flat-cotorsion replacements for any associative ring $A$, and point to how these complexes can be used to describe the derived category over $A$. In particular, we note that---due to a result of Gillespie \cite{Gil04}---every $A$-complex can be replaced by a semi-flat-cotorsion complex in the derived category over $A$, although minimality remains open; see Question \ref{appendix question0}.
\begin{equation*}
  * \ \ * \ \ *
\end{equation*}
\noindent
Throughout, let $R$ be a commutative noetherian ring. We use standard cohomological notation for $R$-complexes (that is, complexes of $R$-modules), and use $\H(-)$ to denote the cohomology functor. Denote by $\Mod{R}$ the category of $R$-modules, $\C(R)$ the category of $R$-complexes, $\K(R)$ the homotopy category of $R$-complexes, and $\D(R)$ the derived category over $R$. A morphism $\alpha:X\to Y$ in $\C(R)$ or $\K(R)$ is a \emph{quasi-isomorphism} if $\H(\alpha)$ is an isomorphism; an $R$-complex $X$ is \emph{acyclic} if $\H(X)=0$, and is \emph{contractible} if $X$ is isomorphic to the zero complex in $\K(R)$.

\section{Decomposing complexes of flat cotorsion modules}\label{section_decompose}
\noindent
For a complex $P$ of finitely generated free modules over a local ring $(R,\fm,k)$, there exists a decomposition $P=P'\oplus P''$ such that $k\otimes_R P'$ has zero differential and $P''$ is contractible; this was shown in \cite{AFH}. 
Although such a phenomenon does not extend to all complexes of infinitely generated projective modules (see Example \ref{free modules}), there does exist a similar decomposition if we take complexes of $\fm$-adic completions of free modules. In this section, we explain this fact and extend it to the case of complexes of flat cotorsion modules.

We start with the following elementary lemma, in which $R$ is not required to be local. 

\begin{lem}\label{nakayama}
Let $\fa$ be an ideal of $R$, let $T$ and $T'$ be $\fa$-adic completions of projective $R$-modules, and let $\overline{\varphi}:R/\fa\otimes_R T \to R/\fa\otimes_R T'$ be a homomorphism.
\begin{enumerate}
\item[\normalfont{(1)}] There exists a homomorphism $\varphi:T\to T'$ such that $R/\fa\otimes_R \varphi=\overline{\varphi}$.
\item[\normalfont{(2)}] Any such lifting $\varphi:T\to T'$ is an isomorphism if $R/\fa\otimes_R \varphi=\overline{\varphi}$ is an isomorphism.
\end{enumerate}
\end{lem}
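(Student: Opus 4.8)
The plan is to read this as a Nakayama-type statement for $\fa$-adically complete modules. Write $T=\widehat P$ and $T'=\widehat{P'}$ for the given completions of projectives, and identify $R/\fa\otimes_R T=T/\fa T$ and $R/\fa\otimes_R T'=T'/\fa T'$. I would first record the standard facts about $\fa$-adic completions over a noetherian ring that carry the proof: since $P,P'$ are projective (hence flat), the modules $T,T'$ are $\fa$-adically complete and separated, each $T/\fa^nT\cong P/\fa^nP$ is a projective $R/\fa^n$-module (and likewise for $T'$), and $T,T'$ are $R$-flat. In particular $T=\varprojlim_n T/\fa^nT$ and $T'=\varprojlim_n T'/\fa^nT'$.

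For part (1), I would build $\varphi$ as an inverse limit of compatible truncations. Set $\varphi_1=\overline\varphi\colon T/\fa T\to T'/\fa T'$, and construct $\varphi_n\colon T/\fa^nT\to T'/\fa^nT'$ by induction on $n$: given $\varphi_n$, form the composite $T/\fa^{n+1}T\twoheadrightarrow T/\fa^nT\xrightarrow{\varphi_n}T'/\fa^nT'$ and lift it along the surjection $T'/\fa^{n+1}T'\twoheadrightarrow T'/\fa^nT'$, which is possible because $T/\fa^{n+1}T$ is projective over $R/\fa^{n+1}$. By construction the $\varphi_n$ commute with the truncation maps, so $\varphi:=\varprojlim_n\varphi_n\colon T\to T'$ is defined and reduces modulo $\fa$ to $\varphi_1=\overline\varphi$; that is, $R/\fa\otimes_R\varphi=\overline\varphi$.

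For part (2), let $\overline\varphi$ be an isomorphism and let $\varphi\colon T\to T'$ be any lift of it. Since $\varphi(\fa^nT)\subseteq\fa^nT'$, it induces truncations $\varphi_n\colon T/\fa^nT\to T'/\fa^nT'$ with $\varphi_1=\overline\varphi$ and $\varphi=\varprojlim_n\varphi_n$; hence it suffices to show each $\varphi_n$ is an isomorphism and then conclude by passing to the inverse limit. I would induct on $n$, the base case being the hypothesis on $\varphi_1$. For the step, apply the short five lemma to the ladder with exact rows $0\to\fa^nT/\fa^{n+1}T\to T/\fa^{n+1}T\to T/\fa^nT\to 0$ and its analogue for $T'$, whose vertical maps are $\operatorname{gr}^n(\varphi)$, $\varphi_{n+1}$, $\varphi_n$; since $\varphi_n$ is an isomorphism by induction, it remains to see $\operatorname{gr}^n(\varphi)$ is one. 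Flatness of $T$ gives $\fa^n\otimes_RT\cong\fa^nT$, hence a natural identification $\fa^nT/\fa^{n+1}T\cong(\fa^n/\fa^{n+1})\otimes_{R/\fa}(T/\fa T)$ (and likewise for $T'$) under which $\operatorname{gr}^n(\varphi)$ becomes $\operatorname{id}_{\fa^n/\fa^{n+1}}\otimes\overline\varphi$, an isomorphism because $\overline\varphi$ is.

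The only real subtlety is the bookkeeping for completions of possibly non-finitely generated projectives: the facts that $T/\fa^nT\cong P/\fa^nP$, that $T$ is complete and separated, and that $T$ is $R$-flat. These rest on $R$ being noetherian (through Artin--Rees and the good behaviour of $\fa$-adic completion on flat modules) and become delicate without that hypothesis; once they are in place the argument is a routine inverse-limit and five-lemma induction, so I expect no genuine obstacle beyond isolating and citing the correct standard statements about $\fa$-adic completions.
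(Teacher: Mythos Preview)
Your proof of part (1) is essentially identical to the paper's: both lift $\overline{\varphi}$ through the tower $T/\fa^nT$ using projectivity of $T/\fa^{n+1}T$ over $R/\fa^{n+1}$ and pass to the inverse limit.

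For part (2) the approaches diverge. You run an induction on $n$ via the short five lemma on the filtration layers, using flatness of $T$ over $R$ to identify $\fa^nT/\fa^{n+1}T\cong(\fa^n/\fa^{n+1})\otimes_{R/\fa}(T/\fa T)$ and thereby recognise $\operatorname{gr}^n(\varphi)$ as $\operatorname{id}\otimes\overline{\varphi}$. The paper instead argues directly at level $n$ using only that $\fa/\fa^n$ is nilpotent in $R/\fa^n$: surjectivity of $\overline{\varphi}$ forces $\operatorname{coker}\phi_n=0$ by Nakayama for nilpotent ideals; then $\phi_n$ splits because $T'/\fa^nT'$ is projective over $R/\fa^n$, and injectivity of $\overline{\varphi}$ kills $\ker\phi_n$ by the same Nakayama step. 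Your associated-graded argument is perfectly correct, but it leans on the flatness of $T$ over $R$, which (as you note) is the one nontrivial input for non-finitely-generated $P$; the paper's route sidesteps this entirely, needing only the isomorphism $T/\fa^nT\cong P/\fa^nP$ already recorded in the remark preceding the proof. So the paper's argument is a bit lighter on prerequisites, while yours has the virtue of making the graded structure explicit.
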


\begin{rmk}\label{classic_iso}
Write $T=\invlim_{n\geq 1}(P/\fa^nP)$ for a projective $R$-module $P$. For the proof of the lemma, we recall that there is a natural isomorphism $T/\fa^n T \cong P/\fa^nP$ for each $n\geq 1$. This is well-known for specialists; when $P$ is finitely generated, \cite[\S 8]{Mat89} is sufficient, but even when $P$ is infinitely generated, it is within classic commutative algebra, see \cite[Corollary 2.1.10 and Proposition 2.2.3]{Str90} or \cite[Lemma 2.3]{NY18}. Indeed, it is further known that the isomorphism holds true for any $R$-module, see \cite[Theorem 1.1]{Sim90} or  \cite[Theorem 2.2.5]{Str90}.
\end{rmk}

\begin{proof}[Proof of Lemma \ref{nakayama}]
Set $\phi_1=\overline{\varphi}$.  For $n\geq 1$, we have $T/\fa^{n+1}T$ is a projective $R/\fa^{n+1}$-module by Remark \ref{classic_iso}, hence a map $\phi_n:T/\fa^nT\to T'/\fa^nT'$ lifts to a map $\phi_{n+1}:T/\fa^{n+1}T\to T'/\fa^{n+1}T'$.
Induction yields maps $\phi_n$ for every $n\geq 1$, thus setting $\varphi=\invlim_{n\geq 1} \phi_n$ yields $(1)$.

For $(2)$, let $\varphi:T\to T'$ be any lifting of $\overline{\varphi}$ such that $R/\fa\otimes_R \varphi=\overline{\varphi}$ is an isomorphism. 
Define $\phi_n: T/\fa^nT\to T'/\fa^nT'$ as the map induced by $\varphi$ for $n\geq 1$, where $\phi_1=\overline{\varphi}$.
It is enough to show that each $\phi_n$ is bijective since $\varphi=\invlim_{n\geq 1}\phi_n$. 
We remark that any $R/\fa^n$-module $M$ with $(\fa/\fa^n) M=M$ is zero since the ideal $\fa/\fa^n$ of $R/\fa^n$ is nilpotent. Hence surjectivity of $R/\fa\otimes_R\phi_n=\overline{\varphi}$ implies $\phi_n$ is surjective, and in fact split surjective since $T'/\fa^n T'$ is projective over $R/\fa^n$. Thus injectivity of $R/\fa\otimes_R\phi_n=\overline{\varphi}$ also implies $\ker\phi_n=0$, that is, $\phi_n$ is injective.
\end{proof}

\begin{rmk}\label{completion of flat module}
When $R$ is a local ring with maximal ideal $\fm$ and $T$ is the $\fm$-adic completion of a free $R$-module, the canonical map $T\to T/\fm T$ is a flat cover by \cite[Proposition 4.1.6]{Xu96}, which can instead be used to verify Lemma \ref{nakayama} in this case. 

The argument in the proof of Lemma \ref{nakayama} is inspired by the proof of \cite[II, Proposition 2.4.3.1]{GR71}, which shows that the $\fm$-adic completion of a flat $R$-module is isomorphic to the $\fm$-adic completion of a free $R$-module; see also  \cite[Lemma 6.7.4]{EJ00}.
\end{rmk}

For an index set $A$ and an $R$-module $M$, we denote by $M^{(A)}$ or $\bigoplus_AM$ the direct sum of $A$-copies of $M$. 
If $(R,\fm,k)$ is local, then we write $\widehat{M}$ for the $\fm$-adic completion of $M$.

\begin{lem}\label{trivialsummand}
Assume $(R, \fm, k)$ is a local ring. Let $A$ and $A'$ be some index sets, and let $\partial:\widehat{R^{(A)}}\to \widehat{R^{(A')}}$ be a homomorphism of $R$-modules.  There exist disjoint partitions $A=B\sqcup C$ and $A'=B\sqcup  C'$ and a commutative diagram of $R$-modules
\[\xymatrix{
\widehat{R^{(A)}}\ar[rr]^\partial && \widehat{R^{(A')}}\\
\widehat{R^{(B)}}\oplus \widehat{R^{(C)}} \ar[u]^{\cong}
\ar[rr]_{\scriptsize\begin{bmatrix}1&0\\0&\partial'\end{bmatrix}} && \widehat{R^{(B)}}\oplus \widehat{R^{(C')}}\ar[u]^{\cong}
}
\]
where $k\otimes_R\partial'=0$.
\end{lem}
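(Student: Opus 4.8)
The plan is to pass to the residue field $k$, realize the desired splitting there by elementary linear algebra, and then lift everything back to the completions using Lemma~\ref{nakayama}. Write $\bar\partial:=k\otimes_R\partial$; by Remark~\ref{classic_iso} we may identify $k\otimes_R\widehat{R^{(A)}}$ with $k^{(A)}$ and $k\otimes_R\widehat{R^{(A')}}$ with $k^{(A')}$, so $\bar\partial$ becomes a $k$-linear map of vector spaces $k^{(A)}\to k^{(A')}$. I would fix a basis of $\ker\bar\partial$ indexed by a set $C$, together with a family $\{v_b\}_{b\in B}$ in $k^{(A)}$ whose union with that basis is a basis of $k^{(A)}$; then $\{\bar\partial(v_b)\}_{b\in B}$ is a basis of $\im\bar\partial$, which I extend to a basis of $k^{(A')}$ by a family $\{w_{c'}\}_{c'\in C'}$. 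Since $B\sqcup C$ and $A$ both index bases of $k^{(A)}$, and $B\sqcup C'$ and $A'$ both index bases of $k^{(A')}$, a cardinality argument lets me take these to be genuine partitions $A=B\sqcup C$ and $A'=B\sqcup C'$. By construction $\bar\partial$ is carried, through the evident $k$-linear isomorphisms $\bar g\colon k^{(B)}\oplus k^{(C)}\xrightarrow{\ \cong\ }k^{(A)}$ and $\bar g'\colon k^{(B)}\oplus k^{(C')}\xrightarrow{\ \cong\ }k^{(A')}$, to the map $\left[\begin{smallmatrix}1&0\\0&0\end{smallmatrix}\right]$ that is the identity on the $k^{(B)}$-summand and zero on $k^{(C)}$.

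To lift this, first observe that finite direct sums commute with the inverse limits defining $\fm$-adic completion, so there are identifications $\widehat{R^{(B)}}\oplus\widehat{R^{(C)}}\cong\widehat{R^{(B\sqcup C)}}\cong\widehat{R^{(A)}}$ and likewise on the primed side; in particular every module occurring is the $\fm$-adic completion of a free module, and Lemma~\ref{nakayama} applies throughout. By Lemma~\ref{nakayama}(1) the isomorphisms $\bar g,\bar g'$ lift to homomorphisms $g\colon\widehat{R^{(B)}}\oplus\widehat{R^{(C)}}\to\widehat{R^{(A)}}$ and $g'\colon\widehat{R^{(B)}}\oplus\widehat{R^{(C')}}\to\widehat{R^{(A')}}$, which are isomorphisms by Lemma~\ref{nakayama}(2). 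Put $\partial^{\flat}:=(g')^{-1}\circ\partial\circ g$, so $k\otimes_R\partial^{\flat}=\bar g'^{-1}\circ\bar\partial\circ\bar g=\left[\begin{smallmatrix}1&0\\0&0\end{smallmatrix}\right]$. Writing $\partial^{\flat}=\left[\begin{smallmatrix}u&\alpha\\\beta&\partial''\end{smallmatrix}\right]$ in block form with respect to the two direct-sum decompositions, this says $u$ reduces to the identity of $\widehat{R^{(B)}}$ while $\alpha,\beta,\partial''$ reduce to $0$; in particular $u$ is an isomorphism by Lemma~\ref{nakayama}(2).

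The remaining---and, I expect, main---step is to promote ``equal to $\left[\begin{smallmatrix}1&0\\0&0\end{smallmatrix}\right]$ modulo $\fm$'' to ``equal to $\left[\begin{smallmatrix}1&0\\0&\partial'\end{smallmatrix}\right]$ on the nose.'' I would do this by block row and column elimination, using the identity
\[
\begin{bmatrix}u&\alpha\\\beta&\partial''\end{bmatrix}
=\begin{bmatrix}1&0\\ \beta u^{-1}&1\end{bmatrix}
\begin{bmatrix}u&0\\0&1\end{bmatrix}
\begin{bmatrix}1&0\\0&\partial''-\beta u^{-1}\alpha\end{bmatrix}
\begin{bmatrix}1&u^{-1}\alpha\\0&1\end{bmatrix}.
\]
Here the first, second, and fourth factors reduce modulo $\fm$ to the identity (since $u$ does and $\alpha,\beta$ reduce to $0$), so by Lemma~\ref{nakayama}(2) each is an automorphism of the relevant completion. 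Absorbing these automorphisms into $g$ and $g'$ yields new isomorphisms $\widehat{R^{(B)}}\oplus\widehat{R^{(C)}}\xrightarrow{\ \cong\ }\widehat{R^{(A)}}$ and $\widehat{R^{(B)}}\oplus\widehat{R^{(C')}}\xrightarrow{\ \cong\ }\widehat{R^{(A')}}$ through which $\partial$ corresponds to $\left[\begin{smallmatrix}1&0\\0&\partial'\end{smallmatrix}\right]$, where $\partial':=\partial''-\beta u^{-1}\alpha$ satisfies $k\otimes_R\partial'=0$ because $\partial''$ and $\beta$ reduce to $0$. This is the commutative diagram in the statement. The real obstacle is precisely this last adjustment: reducing modulo $\fm$ by itself only recovers the desired block shape up to a unipotent error term, and it is the elimination above---together with the second use of Lemma~\ref{nakayama}(2)---that clears it. (The bookkeeping needed to convert abstract index sets for bases into actual partitions $A=B\sqcup C$ and $A'=B\sqcup C'$ is a minor additional point.)
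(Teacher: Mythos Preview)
Your proposal is correct and follows essentially the same route as the paper: reduce modulo $\fm$, use linear algebra on $k$-vector spaces to obtain the partitions and the block form $\left[\begin{smallmatrix}1&0\\0&0\end{smallmatrix}\right]$, lift the change-of-basis isomorphisms via Lemma~\ref{nakayama}, and then clear the off-diagonal blocks using that the top-left entry is invertible. The only difference is that you spell out the block factorization explicitly, whereas the paper simply remarks that ``the conditions on $f$, $g$, and $h$ allow for an elementary translation of the diagram into the desired one''; your version is just a more detailed rendering of that sentence.
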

\begin{proof}
There are isomorphisms $k\otimes_R \widehat{R^{(A)}}\cong k^{(A)}$ and $k\otimes_R \widehat{R^{(A')}}\cong k^{(A')}$, hence we may view $k\otimes_R \partial$ as a linear transformation of $k$-vector spaces. Since $\ker(k\otimes_R \partial)$ and $\im(k\otimes_R\partial)$ are subspaces (and hence direct summands), we may find disjoint partitions $A=B\sqcup C$ and $A'=B\sqcup  C'$ such that the following diagram commutes:
$$\xymatrix{
k^{(A)}\ar[rr]^{k\otimes_R \partial}&& k^{(A')}\\
k^{(B)}\oplus k^{(C)} \ar[u]^{\cong}_{\overline{\alpha}}
\ar[rr]_{\scriptsize\begin{bmatrix}1&0\\0&0\end{bmatrix}} && k^{(B)}\oplus k^{(C')}\ar[u]^{\cong}_{\overline{\beta}}
}$$
The maps $\overline{\alpha}$ and $\overline{\beta}$ lift, by Lemma \ref{nakayama}, to isomorphisms $\alpha:\widehat{R^{(B)}}\oplus \widehat{R^{(C)}}\to \widehat{R^{(A)}}$ and $\beta:\widehat{R^{(B)}}\oplus \widehat{R^{(C')}}\to \widehat{R^{(A')}}$. We thus obtain a commutative diagram: 
$$
\xymatrix{
\widehat{R^{(A)}}\ar[rr]^{\partial}&& \widehat{R^{(A')}}\\
\widehat{R^{(B)}}\oplus \widehat{R^{(C)}} \ar[u]^{\cong}_{\alpha} 
\ar[rr]_{\scriptsize\begin{bmatrix}i&f\\g&h\end{bmatrix}} && \widehat{R^{(B)}}\oplus \widehat{R^{(C')}}\ar[u]^{\cong}_{\beta}
}$$
where $k\otimes_Ri=1$ and $k\otimes_Rf=k\otimes_Rg=k\otimes_Rh=0$. Thus Lemma \ref{nakayama} implies that $i$ is an isomorphism; the conditions on $f$, $g$, and $h$ allow for an elementary translation of the diagram into the desired one.
\end{proof}

We aim to apply Lemma \ref{trivialsummand} to a complex $Y$ of $\fm$-adic completions of free modules. Towards this end, note that application of the lemma to $\partial=d^{0}_Y$ replaces the $4$-term complex $Y^{-1}\to Y^0\to Y^1\to Y^2$ with the following one: 
$$
\xymatrix{
\widehat{R^{(D)}}\ar[rr]^{\scriptsize\begin{bmatrix}0\\a\end{bmatrix}}&& \widehat{R^{(B)}}\oplus \widehat{R^{(C)}} 
\ar[rr]^{\scriptsize\begin{bmatrix}1&0\\0&\partial'\end{bmatrix}} && \widehat{R^{(B)}}\oplus \widehat{R^{(C')}}\ar[rr]^{\scriptsize\begin{bmatrix}0&b\end{bmatrix}}&&\widehat{R^{(D')}},
}$$
where $k\otimes_R \partial'=0$. Hence we can extract 
a direct summand $Y''(0)$ of $Y$ corresponding to a contractible complex $0\to \widehat{R^{(B)}}\xrightarrow{=}\widehat{R^{(B)}}\to 0$. 
By Lemma \ref{trivialsummand}, we can further find such contractible direct summands $Y''(-1)$ and $Y''(1)$ of $Y$ from $d^{-1}_Y$ and $d^{1}_Y$ respectively. Then it is clear from the above matrices that the canonical map $Y''(-1)\oplus Y''(0)\oplus Y''(1)\to Y$ is a split monomorphism. This observation can be used to show the following lemma.

\begin{lem}\label{maxcase 0}
Assume $(R,\fm, k)$ is a local ring. If $Y$ is a complex of  $\fm$-adic completions of free $R$-modules, then $Y=Y'\oplus Y''$, such that the complex $k\otimes_R Y'$ has zero differential and $Y''$ is contractible.
\end{lem}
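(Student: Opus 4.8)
The plan is to promote the summand-extraction observation just made---applied simultaneously at every degree---into a genuine direct-sum decomposition of the whole complex $Y$, and then check the two asserted properties of the factors. First I would fix, for each $n\in\ZZ$, the differential $d^n_Y\colon Y^n\to Y^{n+1}$ and apply Lemma \ref{trivialsummand} to obtain disjoint partitions of the index sets together with the block decomposition of $d^n_Y$ having an identity block $\widehat{R^{(B_n)}}\xrightarrow{=}\widehat{R^{(B_n)}}$ and a residual block $\partial'_n$ with $k\otimes_R\partial'_n=0$. As explained in the paragraph preceding the statement, each such identity block yields a contractible direct summand $Y''(n)\colon 0\to\widehat{R^{(B_n)}}\xrightarrow{=}\widehat{R^{(B_n)}}\to 0$ (concentrated in degrees $n$ and $n+1$) of $Y$, and---crucially---because the off-diagonal blocks $f,g,h$ produced by Lemma \ref{trivialsummand} all vanish mod $k$, the block-triangular shape of consecutive differentials shows the canonical map $\bigoplus_{n\in\ZZ}Y''(n)\to Y$ is a split monomorphism in $\C(R)$. (Here I would note that the direct sum is harmless: in each fixed degree $m$ only the two summands $Y''(m-1)$ and $Y''(m)$ are nonzero, so $\bigoplus_n Y''(n)$ is a genuine complex of $\fm$-adic completions of free modules, levelwise a finite sum.)

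Next I would set $Y''=\bigoplus_{n\in\ZZ}Y''(n)$ and let $Y'$ be a complementary summand, $Y=Y'\oplus Y''$; by construction $Y''$ is contractible. It remains to verify that $k\otimes_R Y'$ has zero differential. For this I would track the effect of the decomposition degree by degree: after splitting off the identity block from $d^n_Y$ for every $n$, what remains in degree $n$ is a module of the form $\widehat{R^{(C_{n-1}')}}\oplus\widehat{R^{(C_n)}}$ (the surviving parts from the splittings at $d^{n-1}_Y$ and $d^n_Y$), and the induced differential $Y'^{\,n}\to Y'^{\,n+1}$ is assembled from the residual blocks $\partial'_{n-1},\partial'_n$ together with the vanishing-mod-$k$ blocks $f,g,h$. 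Applying $k\otimes_R(-)$ and using $k\otimes_R\partial'_n=0$ for all $n$ together with $k\otimes_R f=k\otimes_R g=k\otimes_R h=0$ kills every block, so $k\otimes_R d^n_{Y'}=0$ for all $n$, as required. Since $k\otimes_R(-)$ is additive and $k\otimes_R Y''$ plainly has zero differential as well (its differentials are identities, but it is contractible---wait, I should instead simply observe $k\otimes_R Y=k\otimes_R Y'\oplus k\otimes_R Y''$ and that we only need the statement for $Y'$), the conclusion follows.

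I expect the main obstacle to be bookkeeping rather than any conceptual difficulty: one must carry out the translation of the $4$-term matrix pictures into a coherent global statement, making sure that the isomorphisms $\alpha_n,\beta_n$ chosen at each degree are compatible across consecutive differentials so that the off-diagonal blocks really do become the clean form $\begin{bmatrix}0\\a\end{bmatrix}$ and $\begin{bmatrix}0&b\end{bmatrix}$ displayed above, and that the split monomorphism $\bigoplus_n Y''(n)\hookrightarrow Y$ genuinely splits as a morphism of complexes and not merely levelwise. The cleanest way to organize this is to perform the change of basis $d^n_Y\rightsquigarrow \begin{bmatrix}1&0\\0&\partial'_n\end{bmatrix}$ one degree at a time, absorbing at each stage the newly created contractible summand; since altering $d^n_Y$ only touches degrees $n$ and $n+1$ and the modification in degree $n+1$ is by an isomorphism that is the identity mod $k$, Lemma \ref{nakayama}(2) guarantees it does not disturb the already-normalized $d^{n-1}_Y$ beyond an innocuous automorphism of its target. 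Iterating and then passing to the (levelwise finite) direct sum gives the decomposition, completing the proof.
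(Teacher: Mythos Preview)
Your approach is the same as the paper's: apply Lemma~\ref{trivialsummand} at each degree to extract the contractible summand $Y''(n)$, set $Y''=\bigoplus_{n} Y''(n)$ and $Y'=Y/Y''$. One point to tighten in your verification that $k\otimes_R Y'$ has zero differential: the description of $(Y')^n$ as $\widehat{R^{(C'_{n-1})}}\oplus\widehat{R^{(C_n)}}$ is not correct, since these are complements in $Y^n$ taken with respect to two \emph{different} decompositions (one coming from $d^{n-1}_Y$, the other from $d^n_Y$), so their direct sum even has the wrong rank; likewise the blocks $f,g,h$ you invoke were already absorbed in the final step of the proof of Lemma~\ref{trivialsummand} and do not appear in the normalized form of $d^n_Y$. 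The paper's (implicit) argument is cleaner and avoids describing $(Y')^n$ explicitly: since $Y=Y''(n)\oplus(Y/Y''(n))$ as complexes with $k\otimes_R d^n_{Y/Y''(n)}=0$, the image of $k\otimes_R d^n_Y$ is precisely $k\otimes_R Y''(n)^{n+1}\subseteq k\otimes_R(Y'')^{n+1}$, and hence vanishes in the quotient $k\otimes_R(Y')^{n+1}$.
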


\begin{proof}
Applying Lemma \ref{trivialsummand} to $d_Y^n:Y^n\to Y^{n+1}$ for each $n\in \mathbb{Z}$, 
extract a contractible direct summand $Y''(n)$ of $Y$ such that the differential of $Y/Y''(n)$ in degree $n$ is zero upon application of $k\otimes_R -$. Then $Y$ has a contractible direct summand of the form $Y''=\bigoplus_{n\in \mathbb{Z}}Y''(n)=\prod_{n\in \mathbb{Z}}Y''(n)$, and  
the differential of $Y'=Y/Y''$ is zero upon application of $k\otimes_R -$.
\end{proof}

The next example exhibits the necessity of taking completions to obtain a suitable decomposition. 
\begin{exa}\label{free modules}
Let $(R,\fm,k)$ be a local ring with $\dim R\geq 1$. Let $x\in \fm$ be an element that is not nilpotent. The localization $R_x$ is therefore nonzero and has a projective resolution of the form $P=(0\to \bigoplus_{\NN}R\to \bigoplus_{\NN}R\to 0)$; indeed, $R_x\cong R[Y]/(1-xY)$ for an indeterminate $Y$, hence the exact sequence
\[\xymatrix{0\ar[r] & R[Y]\ar[r]^{1-xY} & R[Y]\ar[r] &R_x\ar[r] & 0}\]
provides such a resolution $P$. Since $k\otimes_R R_x=0$ and $R_x$ is a flat $R$-module, the complex $k\otimes_R P = (0\to \bigoplus_{\NN}k \xrightarrow{\cong} \bigoplus_{\NN}k\to 0)$ is exact, thus $P$ has no nonzero direct summand $P'$ such that $k\otimes_R P'$ has zero differential. However, $P$ is not contractible since $R_x$ is nonzero.
\end{exa}

The goal of this section is to extend Lemma \ref{maxcase 0} above to the case of complexes of flat cotorsion modules, and so we begin with some basic facts about these. Here we return to the setting of any commutative noetherian ring $R$.

An $R$-module $M$ is {\em flat cotorsion} if it is both flat and cotorsion, that is, $M$ is flat and $\Ext_R^1(F,M)=0$ for every flat $R$-module $F$. Enochs shows in \cite{Eno84} that an $R$-module $M$ is flat cotorsion if and only if $M\cong \prod_{\fp\in \Spec R}T_\fp$, where $T_\fp$ is the $\fp$-adic completion of a free $R_\fp$-module. 
For an ideal $\fa$ of $R$, let $\Lambda^{\fa}=\varprojlim_{n\geq 1} (-\otimes_R R/\fa^n)$ denote the $\fa$-adic completion functor; for an $R$-module $M$, also write $\Lambda^{\fa}M=M^\wedge_\fa$.

A motivation for studying complexes of flat cotorsion $R$-modules is their relationship to cosupport. The notion of cosupport was defined by Benson, Iyengar, and Krause \cite{BIK12}, whose work was inspired by Neeman's \cite{Nee11}. For an $R$-complex $X$, the {\em cosupport} of $X$ is:
$$\cosupp_RX=\{\fp\in \Spec R \mid \H(\RHom_R(\kappa(\fp),X))\not=0\},$$
where $\kappa(\fp)$ stands for the residue field $R_\fp/\fp R_\fp$. See also the equivalent characterizations in \eqref{cosupport characterization}. This is dual to the notion of support defined by Foxby \cite{Fox79}; the \emph{support} of $X$ is:
$$\supp_RX=\{\fp\in \Spec R \mid \H(\kappa(\fp)\Lotimes_R X)\not=0\}.$$
For an index set $A$, we have $\supp \bigoplus_{A} E(R/\fp)\subseteq \{\fp\}$, where $E(R/\fp)$ stands for the injective hull of $R/\fp$ over $R$.
Further, \cite[Theorem 3.4.1(7)]{EJ00} yields an isomorphism
\begin{align}
\textstyle{(\bigoplus_{A}R_\fp)^\wedge_{\fp}\cong \Hom_R(E(R/\fp), \bigoplus_{A} E(R/\fp))}.\label{injective dual}
\end{align}
From this and tensor-hom adjunction, we see that $\cosupp (\bigoplus_{A} R_\fp)^\wedge_\fp\subseteq \{\fp\}$.
Consequently it follows that a flat cotorsion $R$-module $M$ has cosupport contained in a subset $W$ of $\Spec R$ if and only if $M\cong \prod_{\fp\in W}T_\fp$, where $T_\fp$ is the $\fp$-adic completion of a free $R_\fp$-module. We can therefore translate Lemma \ref{maxcase 0} to:

\begin{lem}\label{maxcase}
Let $\fp\in \Spec R$. If $Y$ is a complex of flat cotorsion $R$-modules with $\cosupp_R Y^i\subseteq \{\fp\}$ for every $i\in \ZZ$, then $Y=Y'\oplus Y''$, such that the complex $\kappa(\fp)\otimes_R Y'$ has zero differential and $Y''$ is contractible.
\end{lem}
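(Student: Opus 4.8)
The plan is to reduce the statement to Lemma \ref{maxcase 0} by passing to the local ring $R_\fp$. First I would use the characterization recalled just before the statement: since each $Y^i$ is flat cotorsion with $\cosupp_R Y^i\subseteq\{\fp\}$, there is an index set $A_i$ with $Y^i\cong(\bigoplus_{A_i}R_\fp)^\wedge_\fp$. In particular each $Y^i$ is an $R_\fp$-module, and since $(\fp R_\fp)^nM=\fp^nM$ for any $R_\fp$-module $M$, the $R$-module $(\bigoplus_{A_i}R_\fp)^\wedge_\fp$ coincides with the $\fp R_\fp$-adic completion over $R_\fp$ of a free $R_\fp$-module.

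Next I would observe that $Y$ is in fact a complex of $R_\fp$-modules: every term is an $R_\fp$-module, and an $R$-linear map between $R_\fp$-modules is automatically $R_\fp$-linear. Thus $Y$ satisfies the hypotheses of Lemma \ref{maxcase 0} for the local ring $(R_\fp,\fp R_\fp,\kappa(\fp))$, which yields a decomposition $Y=Y'\oplus Y''$ in $\C(R_\fp)$ such that $\kappa(\fp)\otimes_{R_\fp}Y'$ has zero differential and $Y''$ is contractible over $R_\fp$.

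Finally I would transport this back along restriction of scalars $\C(R_\fp)\to\C(R)$: a direct sum decomposition of $R_\fp$-complexes is in particular one of $R$-complexes, a contracting homotopy for $Y''$ over $R_\fp$ is $R$-linear so $Y''$ is contractible in $\K(R)$, and for the $R_\fp$-module $Y'$ one has $\kappa(\fp)\otimes_R Y'\cong\kappa(\fp)\otimes_{R_\fp}(R_\fp\otimes_R Y')\cong\kappa(\fp)\otimes_{R_\fp}Y'$, so the differential of $\kappa(\fp)\otimes_R Y'$ vanishes as well. I do not expect a genuine obstacle here; the entire mathematical content sits in Lemma \ref{maxcase 0}, and the only points requiring a line of care are the compatibilities between the $\fp$-adic and $\fp R_\fp$-adic completions and between $\kappa(\fp)\otimes_R(-)$ and $\kappa(\fp)\otimes_{R_\fp}(-)$ on $R_\fp$-modules, both of which follow from $\fp^nM=(\fp R_\fp)^nM$ for $R_\fp$-modules $M$.
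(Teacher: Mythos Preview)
Your proposal is correct and follows exactly the paper's approach: the paper's proof reads in its entirety ``Reduce to a local ring $(R,\fm,k)$; this is just a restatement of Lemma \ref{maxcase 0},'' and you have simply written out the details of that reduction. The compatibilities you flag (between $\fp$-adic and $\fp R_\fp$-adic completion, and between $\kappa(\fp)\otimes_R-$ and $\kappa(\fp)\otimes_{R_\fp}-$ on $R_\fp$-modules) are precisely what is implicit in the paper's one-line proof.
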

\begin{proof}
Reduce to a local ring $(R,\fm,k)$; this is just a restatement of Lemma \ref{maxcase 0}.
\end{proof}

For a subset $W$ of $\Spec R$, we define $\dim W$ as the supremum of the lengths of strict chains of prime ideals in $W$. As is standard, $\dim (\Spec R)$ is denoted by $\dim R$; this is the Krull dimension of $R$. The next theorem is the main result of this section. In its proof, we use several basic facts about complexes of flat cotorsion $R$-modules; they are summarized at the end of this section.

\begin{thm}\label{minsplit}
Let $W\subseteq \Spec R$ with $\dim W<\infty$. If $Y$ is a complex of flat cotorsion $R$-modules with $\cosupp_RY^i\subseteq W$ for every $i\in \ZZ$, then $Y=Y'\oplus Y''$, such that the complex $\kappa(\fp)\otimes_R \Hom_R(R_\fp,Y')$ has zero differential for every $\fp\in W$ and $Y''$ is contractible.
\end{thm}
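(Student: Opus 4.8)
The plan is to prove Theorem \ref{minsplit} by induction on $\dim W$, using Lemma \ref{maxcase} as the base case and a stratification of $\Spec R$ by dimension to perform the inductive step. Write $n=\dim W$. Decompose $W=W_0\sqcup W_{\geq 1}$ where $W_0$ is the set of primes in $W$ minimal in $W$ (equivalently, $\dim_W\fp=0$ for $\fp\in W_0$) and $W_{\geq 1}=W\setminus W_0$, so that $\dim W_{\geq 1}\leq n-1$. The first step is to isolate the part of $Y$ ``concentrated at $W_0$.'' Using the structure theorem of Enochs, each $Y^i\cong\prod_{\fp\in W}T^i_\fp$ with $T^i_\fp$ the $\fp$-adic completion of a free $R_\fp$-module; I would split off the factors $\prod_{\fp\in W_0}T^i_\fp$. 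The crucial point is that this splitting is compatible with the differentials: since the primes in $W_0$ are minimal in $W$, there are no nonzero maps from a flat cotorsion module cosupported at a prime in $W_{\geq 1}$ into one cosupported at a minimal prime $\fp\in W_0$ — this follows from the adjunction computations around \eqref{injective dual} together with the fact that $\Hom_R(R_\fq,(\bigoplus R_\fp)^\wedge_\fp)=0$ when $\fq\not\subseteq\fp$. Hence $Y$ decomposes in $\C(R)$ as $Y\cong Y_{W_0}\oplus Y_{W_{\geq 1}}$, where $Y_{W_0}$ is a complex of flat cotorsion modules each with cosupport in $W_0$ and $Y_{W_{\geq 1}}$ one with cosupport in $W_{\geq 1}$. (One must be slightly careful: the ``upper triangular'' form of the differential means the splitting is as a complex, not merely degreewise; the argument is the standard change-of-basis used already in Lemma \ref{trivialsummand}.)

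Next, I would handle the two summands separately. For $Y_{W_{\geq 1}}$, apply the induction hypothesis: $Y_{W_{\geq 1}}=Z'\oplus Z''$ with $Z''$ contractible and $\kappa(\fp)\otimes_R\Hom_R(R_\fp,Z')$ having zero differential for all $\fp\in W_{\geq 1}$; note that for $\fp\in W_0$ one has $\Hom_R(R_\fp,Z')=0$ since $\cosupp Z'\subseteq W_{\geq 1}$ and no prime of $W_{\geq 1}$ is contained in a minimal prime $\fp$ of $W$ (again by the adjunction/$\Hom$-vanishing above), so the zero-differential condition holds vacuously there too. For $Y_{W_0}$, the primes in $W_0$ are pairwise incomparable, so $Y_{W_0}$ further splits in $\C(R)$ as $\prod_{\fp\in W_0}$ (equivalently $\bigoplus_{\fp\in W_0}$, using the basic facts at the end of the section) of complexes $Y^{(\fp)}$ each with $\cosupp(Y^{(\fp)})^i\subseteq\{\fp\}$; to each apply Lemma \ref{maxcase} to get $Y^{(\fp)}=Y'^{(\fp)}\oplus Y''^{(\fp)}$ with $Y''^{(\fp)}$ contractible and $\kappa(\fp)\otimes_R Y'^{(\fp)}$ having zero differential. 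Since localizing at $\fp$ is exact and $Y'^{(\fp)}$ is already $\fp$-local, $\kappa(\fp)\otimes_R\Hom_R(R_\fp,Y'^{(\fp)})=\kappa(\fp)\otimes_R Y'^{(\fp)}$ has zero differential; and for a prime $\fq\neq\fp$ in $W_0$ one has $\Hom_R(R_\fq,Y'^{(\fp)})=0$ by incomparability, so the condition again holds for all $\fp\in W$. Assembling, set $Y'=Z'\oplus\bigoplus_{\fp\in W_0}Y'^{(\fp)}$ and $Y''=Z''\oplus\bigoplus_{\fp\in W_0}Y''^{(\fp)}$; then $Y=Y'\oplus Y''$, $Y''$ is contractible, and $\kappa(\fp)\otimes_R\Hom_R(R_\fp,Y')$ has zero differential for every $\fp\in W$, as the functor $\kappa(\fp)\otimes_R\Hom_R(R_\fp,-)$ is additive.

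The step I expect to be the main obstacle is verifying that the degreewise splitting $Y^i\cong\prod_{\fp\in W_0}T^i_\fp\ \oplus\ \prod_{\fp\in W_{\geq 1}}T^i_\fp$ assembles into a splitting of complexes — i.e., that the differentials are compatible with this decomposition. This rests on the $\Hom$-vanishing $\Hom_R(T_\fq,T_\fp)=0$ when $\fq\not\subseteq\fp$ for flat cotorsion modules $T_\fq,T_\fp$, which should be one of the ``basic facts about complexes of flat cotorsion $R$-modules'' collected at the end of the section (derivable from \eqref{injective dual} via Matlis duality and the computation of $\Hom$ between completions of localizations). Granting that vanishing, the differential $d^i_Y:Y^i\to Y^{i+1}$ maps the $W_{\geq 1}$-part into the $W_{\geq 1}$-part but the $W_0$-part may have a component into the $W_{\geq 1}$-part; this is an upper-triangular shape, and exactly as in the passage from the matrix $\left[\begin{smallmatrix}i&f\\g&h\end{smallmatrix}\right]$ to $\left[\begin{smallmatrix}1&0\\0&\partial'\end{smallmatrix}\right]$ in the proof of Lemma \ref{trivialsummand}, an elementary change of coordinates (an automorphism of $Y$ in each degree, built from the off-diagonal maps) straightens it out so that $Y_{W_{\geq 1}}$ becomes a genuine subcomplex direct summand and $Y_{W_0}$ the complementary quotient complex. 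I would spell out this coordinate change carefully, as it is the one place where the order of primes by dimension is genuinely used. Everything else is bookkeeping with the additive functors $\kappa(\fp)\otimes_R\Hom_R(R_\fp,-)$ and the already-established lemmas.
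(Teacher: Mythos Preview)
Your approach has a genuine gap: the claimed decomposition $Y\cong Y_{W_0}\oplus Y_{W_{\geq 1}}$ in $\C(R)$ is false in general. First, the Hom-vanishing goes the other way. For $\fp\in W_0$ minimal in $W$ and $\fq\in W_{\geq 1}$, formula \eqref{fc_coloc} gives $\Hom_R(R_\fq,T_\fp)=T_\fp$ whenever $\fp\subseteq\fq$ (not zero), so $\Hom_R(T_\fq,T_\fp)$ need not vanish---take $R=\ZZ$, $\fp=(0)$, $\fq=(2)$, where $\Hom_\ZZ(\ZZ_2,\QQ)\neq 0$. What does vanish is $\Hom_R(T_\fp,T_\fq)$, so the differential is triangular in the opposite direction from what you assert, and one obtains only a degreewise split exact sequence $0\to Y_{W_0}\to Y\to Y_{W_{\geq 1}}\to 0$, not a direct sum of complexes.

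More seriously, your proposed ``change of coordinates'' cannot split this sequence. The analogy with Lemma~\ref{trivialsummand} fails because there the essential point is that one block of the matrix is \emph{invertible}, and its inverse is what drives the row and column operations; here no block is invertible. Conjugating by block-triangular automorphisms $\left[\begin{smallmatrix}1&e^i\\0&1\end{smallmatrix}\right]$ in each degree kills the off-diagonal component precisely when the $e^i$ furnish a null-homotopy of the connecting map $Y_{W_{\geq 1}}\to Y_{W_0}[1]$, i.e., precisely when the sequence already splits---so the argument is circular. The paper circumvents this by never attempting to split $Y$. It stratifies by $\max W$ rather than $\min W$, obtains the degreewise split sequence \eqref{degreewise split}, applies induction to the subcomplex $X$ and the base case to the quotient $Z$, and then uses a pushout along the projection $X\to X'$ followed by a pullback along the inclusion $Z'\to Z$ to peel the contractible pieces $X''$ and $Z''$ off of $Y$ one at a time (each step producing a genuine splitting in $\C(R)$ because the complementary piece is contractible, via \cite[Lemma 1.6]{AM02}). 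The residual complex $Q$ then sits in a degreewise split sequence $0\to X'\to Q\to Z'\to 0$, from which the zero-differential condition is read off directly. This pushout/pullback manoeuvre is the missing idea in your plan.
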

\begin{proof}
We proceed by induction on $\dim W$.
First suppose $\dim W=0$. In this case, $Y\cong \prod_{\fq\in W} \Lambda^\fq Y$ by (\ref{fc_dim0}), and $\Lambda^\fq Y$ consists of flat cotorsion $R$-modules having cosupport contained in $\{\fq\}$ by (\ref{fc_lambda}). For each $\fq\in W$, we apply Lemma \ref{maxcase} to obtain a decomposition $\Lambda^\fq Y=Y'(\fq)\oplus Y''(\fq)$, where $\kappa(\fq)\otimes_R Y'(\fq)$ has zero differential and $Y''(\fq)$ is contractible. Taking a product over $\fq\in W$, we obtain a decomposition 
\begin{align}\label{0splitting}
\textstyle{\prod_{\fq\in W}\Lambda^\fq Y=\prod_{\fq\in W}(Y'(\fq)\oplus Y''(\fq)) \cong  \left(\prod_{\fq\in W}Y'(\fq)\right)\oplus \left(\prod_{\fq\in W}Y''(\fq)\right)}.
\end{align}
A product of contractible complexes is contractible, hence $\prod_{\fq\in W}Y''(\fq)$ is contractible; moreover, \eqref{fc_coloc} implies that for every $\fp\in W$ there is an isomorphism 
\begin{align*}
\textstyle{
\kappa(\fp)\otimes_R\Hom_R(R_\fp, \prod_{\fq\in W}Y'(\fq))\cong \kappa(\fp)\otimes_RY'(\fp),}
\end{align*}
and the latter has zero differential.

Next suppose $\dim W=n>0$. 
Set $Z=\prod_{\fq\in \max W}\Lambda^\fq Y$. By \eqref{degreewise split}, there is a degreewise split exact sequence of complexes of flat cotorsion $R$-modules:
\[\xymatrix{
0\ar[r] & X\ar[r] & Y \ar[r]^{}  & Z\ar[r] & 0.
}\]
The complexes $X$ and $Z$ are complexes of flat cotorsion $R$-modules with cosupport in $W\setminus \max W$ and $\max W$, respectively. As $\dim (W\setminus \max W)<n$ and $\dim (\max W)=0<n$, we may apply the inductive hypothesis to obtain decompositions $X=X'\oplus X''$ and $Z=Z'\oplus Z''$, where $\kappa(\fp)\otimes_R \Hom_R(R_\fp,X')$ and $\kappa(\fp)\otimes_R \Hom_R(R_\fp,Z')$ have zero differential for every $\fp\in W$ and $X''$ and $Z''$ are contractible; see also \eqref{fc_kappa}. Letting $\pi:X\to X'$ be the canonical projection, there exists a complex $P$ of flat cotorsion $R$-modules making the following push-out diagram commute:
\[\xymatrix{
0\ar[r] & X\ar[r]\ar[d]^{\pi}_{} &  Y\ar[r]^{}\ar[d]^f  & Z\ar[r]\ar@{=}[d] & 0\\
0 \ar[r] & X'\ar[r] & P \ar[r] & Z\ar[r] & 0
}\]
The snake lemma yields an exact sequence of complexes of flat cotorsion $R$-modules $0\to X''\to Y\xrightarrow{f} P\to 0$; evidently, this sequence is degreewise split, and it follows from the proof of \cite[Lemma 1.6]{AM02} (see also \cite[Propositions 2.5 and 2.6]{CT19}) that the sequence splits in $\C(R)$ and $f$ is a homotopy equivalence.

On the other hand, letting $\iota:Z'\to Z$ be the canonical inclusion, we obtain a complex $Q$ of flat cotorsion $R$-modules making the pull-back diagram commute:
\begin{align}
\label{induct}
\begin{CD}
\xymatrix{
0\ar[r] & X'\ar[r] &  P\ar[r]^{}  & Z\ar[r] & 0\\
0 \ar[r] & X'\ar[r]\ar@{=}[u] & Q \ar[r]\ar[u]_{g} & Z'\ar[r]\ar[u]^{}_{\iota} & 0
}
\end{CD}
\end{align}
The snake lemma yields a degreewise split exact sequence $0\to Q\xrightarrow{g} P\to Z''\to 0$ of flat cotorsion $R$-modules. As $Z''$ is contractible, this sequence splits in $\C(R)$ and $g$ is a homotopy equivalence by the dual argument of the proof of \cite[Lemma 1.6]{AM02} (see also \cite[Propositions 2.5 and 2.6]{CT19}); let $g':P\to Q$ be a splitting of $g$ in $\C(R)$, and note that $g'$ is also a homotopy equivalence. Thus we have a split exact sequence 
\[\xymatrix{
0\ar[r] & \ker(g'f)\ar[r] & Y\ar[r]^{g'f} & Q\ar[r] & 0,
}\]
where $\ker(g'f)$ is a contractible complex of flat cotorsion modules.

It remains to show that for every $\fp\in W$, the complex $\kappa(\fp)\otimes_R \Hom_R(R_\fp,Q)$ has zero differential.  To do so, we use the degreewise split exact sequence in the bottom row of \eqref{induct}. The modules in $X'$ have cosupport contained in $W\setminus \max W$ as $X'$ is a direct summand of $X$; similarly the modules in $Z'$ have cosupport contained in $\max W$. If $\fp\in \max W$, then $\kappa(\fp)\otimes_R \Hom_R(R_\fp,X')=0$ by \eqref{fc_kappa}. This implies that
$$\kappa(\fp)\otimes_R \Hom_R(R_\fp,Q)=\kappa(\fp)\otimes_R \Hom_R(R_\fp,Z')$$
and that the latter has zero differential by construction.  
If $\fp\in W\setminus \max W$, then we have
$\Hom_R(R_\fp,Z') = 0$ by \eqref{fc_coloc} and hence
$$\kappa(\fp)\otimes_R \Hom_R(R_\fp,X')=\kappa(\fp)\otimes_R \Hom_R(R_\fp,Q),$$
where the former has zero differential by construction.  
\end{proof} 

\begin{rmk}\label{flat cover}
It is known that a complex of objects in an abelian category admitting injective envelopes can be decomposed as a direct sum of a minimal complex and a contractible complex; see \cite[Proposition B.2]{Kra05}. In our situation, however, it is not clear how the arguments of \cite[Proposition B.2]{Kra05} can be employed; indeed, flat envelopes may not exist, and although flat covers do exist over any ring \cite{BEBE01}, there exists a minimal complex of flat cotorsion modules that is not built from flat covers, see Example \ref{not_fc}. This is one motivation for modelling the arguments here on that of finitely generated free modules over a local ring. 

On the other hand, minimality of a complex of flat cotorsion modules with cosupport in $\{\fp\}$ can be characterized by flat covers; see Theorem \ref{criteria}.
\end{rmk}

In the remainder of this section, we summarize several basic facts concerning flat cotorsion $R$-modules which are often used in this paper.
Let $F= \prod_{\fq\in \Spec R}T_\fq$ be a flat cotorsion $R$-module, where $T_\fq$ is the $\fq$-adic completion of a free $R_\fq$-module.  
The derived functors $\LLambda^{\fp}$ and $\RHom_R(R_\fp,-)$ with $\fp\in \Spec R$
are useful for working with such a module; in particular, the following hold:
\begin{align}
&\LLambda^\fp F\cong  \Lambda^\fp F\cong \textstyle{\prod_{\fq\supseteq \fp} T_\fq}\label{fc_lambda};\\
&\RHom_R(R_\fp,F)\cong \Hom_R(R_\fp,F)\cong \textstyle{\prod_{\fq\subseteq \fp} T_\fq.\label{fc_coloc}}
\end{align}
See \cite[\S4, p. 69]{Lip02} and  \cite[Lemma 2.2]{Tho19min}.
Not surprisingly, the above formulas extend to bounded complexes of flat cotorsion $R$-modules, see also \eqref{derived fc_lambda and fc_coloc}. 
If for each $\fq\in \Spec R$ we write $T_\fq=(\bigoplus_{B_\fq} R_\fq)^\wedge_\fq$ for some index set $B_\fq$, then
\begin{align}
\kappa(\fp)\Lotimes_R\RHom_R(R_\fp, F)\cong \kappa(\fp)\otimes_R\Hom_R(R_\fp, F)\cong \textstyle{\bigoplus_{B_\fp}\kappa(\fp)}\label{fc_kappa},
\end{align}
see also Remark \ref{classic_iso}.

For a finitely generated $R$-module $M$, there is also a canonical isomorphism
\begin{align}
M\otimes_R\Hom_R(R_\fp, F)\cong \Hom_R(R_\fp, M\otimes_RF).
\label{fc_tensor}
\end{align}
This map is given by the tensor evaluation map, and we only need to check right exactness of $\Hom_R(R_\fp, -\otimes_R F)$. This verification can be reduced to checking right exactness of $\Hom_R(R_\fp, -\otimes_R T_\fq)$ for each prime $\fq$, as $M$ is finitely presented, which can be checked by using \eqref{injective dual}.

We next explain a useful reduction technique for complexes of flat cotorsion $R$-modules that is used a number of times.
Let $W$ be a subset of $\Spec R$ and $Y$ be a complex of flat cotorsion $R$-modules with $\cosupp Y^i\subseteq W$. We may then write 
\[Y=(\xymatrix{\cdots \ar[r] & \textstyle{\prod_{\fq\in W}T^i_\fq}\ar[r] & \textstyle{\prod_{\fq\in W}T^{i+1}_\fq}\ar[r] & \cdots}),
\]
where $Y^i=\prod_{\fq\in W}T^i_\fq$ and each $T^i_\fq$ is the $\fq$-adic completion of a free $R_\fq$-module.
We denote by $\max W$ the subset of $W$ consisting of  prime ideals which are maximal in $W$ with respect to inclusion. If $\fp \in \max W$, then  
\[
\Lambda^\fp Y=(\xymatrix{\cdots \ar[r] & T^i_\fp\ar[r] & T^{i+1}_\fp\ar[r] & \cdots})
\] 
by \eqref{fc_lambda}. 
Thus, the chain map $Y\to \prod_{\fp\in \max W}\Lambda^\fp Y$ induced by the canonical chain maps $Y\to\Lambda^\fp Y$ yields a degreewise split exact sequence:
\begin{align}\label{degreewise split}
\xymatrix{
0\ar[r] & X\ar[r] & Y \ar[r]^{}  & \textstyle{\prod_{\fp\in \max W}\Lambda^\fp Y}\ar[r] & 0,
}\end{align}
where $X^i=\prod_{\fp\in W\backslash \max W}T^i_\fp$. In particular, if $\dim W=0$, then $\max W=W$ and we have 
\begin{align}\label{fc_dim0}
\textstyle{Y \cong \prod_{\fp\in W} \Lambda^\fp Y}.
\end{align}

\section{Minimality criteria for complexes of flat cotorsion modules}\label{section_min}
\noindent
We now aim to refine and recover \cite[Theorem 3.5]{Tho19min}, which gives minimality criteria for complexes of flat cotorsion modules; our approach uses tools from the previous section. 

Let $Y$ be an $R$-complex. There are important bi-implications about cosupport:
\begin{align}
\fp\in \cosupp_RY&\iff \H(\LLambda^{\fp}\RHom_R(R_\fp, Y))\neq 0\label{cosupport characterization}\\
&\iff \H(\kappa(\fp)\Lotimes_R\RHom_R(R_\fp, Y))\neq 0\nonumber.
\end{align}
These characterizations were essentially shown in \cite{BIK12}; see also \cite[Proposition 4.4]{SWW17}.

The next lemma is a version of \cite[Lemma 3.1]{Tho19min}; the proof given here instead uses the notion of cosupport.

\begin{lem}\label{iso_p}
Let $f$ be a homomorphism of flat cotorsion $R$-modules. The following conditions are equivalent:
\begin{enumerate}
\item[$(1)$] $f$ is an isomorphism.
\item[$(2)$] $\Lambda^\fp\Hom_R(R_\fp,f)$ is an isomorphism for every $\fp\in \Spec R$.
\item[$(3)$] $\kappa(\fp)\otimes_R \Hom_R(R_\fp,f)$ is an isomorphism for every $\fp\in \Spec R$.
\end{enumerate}
\end{lem}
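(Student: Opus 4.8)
The plan is to prove the cycle of implications $(1)\Rightarrow(2)\Rightarrow(3)\Rightarrow(1)$, where the first two implications are essentially formal and the content is concentrated in $(3)\Rightarrow(1)$. For $(1)\Rightarrow(2)$: the functors $\Hom_R(R_\fp,-)$ and $\Lambda^\fp$ preserve isomorphisms, so this is immediate. For $(2)\Rightarrow(3)$: writing $f\colon F\to G$ with $F=\prod_{\fq}T_\fq$ and $G=\prod_{\fq}S_\fq$ for $\fq$-adic completions of free $R_\fq$-modules, formula \eqref{fc_coloc} identifies $\Hom_R(R_\fp,f)$ with a map of flat cotorsion $R$-modules $\prod_{\fq\subseteq\fp}T_\fq\to\prod_{\fq\subseteq\fp}S_\fq$; one then observes that $\kappa(\fp)\otimes_R-$ factors through $\Lambda^\fp$ up to the natural identifications (via Remark \ref{classic_iso}, since $\kappa(\fp)\otimes_R\Lambda^\fp M\cong\kappa(\fp)\otimes_R M$ for the relevant modules), so an isomorphism after $\Lambda^\fp\Hom_R(R_\fp,-)$ remains an isomorphism after $\kappa(\fp)\otimes_R\Hom_R(R_\fp,-)$.

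The heart of the argument is $(3)\Rightarrow(1)$. Here I would first reduce, using the decomposition $F=\prod_{\fq}T_\fq$, to understanding the components of $f$. The key point is that $f$ being an isomorphism can be detected ``one prime at a time'': applying $\Hom_R(R_\fp,-)$ to $f$ picks out, up to the completions at larger primes being killed, the part of $f$ supported at primes $\subseteq\fp$, and then $\kappa(\fp)\otimes_R-$ together with Lemma \ref{nakayama}(2) (Nakayama for completions of projectives) upgrades bijectivity mod $\fp R_\fp$ to bijectivity of the relevant completed-free component. Concretely, for a fixed $\fp$, set $T=\Hom_R(R_\fp,F)$ and $T'=\Hom_R(R_\fp,G)$; by \eqref{fc_coloc} these are completions of free $R_\fp$-modules (more precisely $R_\fp$-modules that are $\fp R_\fp$-adic completions, after localizing at $\fp$), and the hypothesis says $\kappa(\fp)\otimes_R\Hom_R(R_\fp,f)$ is an isomorphism. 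Lemma \ref{nakayama}(2), applied over the local ring $R_\fp$ with ideal $\fp R_\fp$, then forces $\Hom_R(R_\fp,f)$ to be an isomorphism for every $\fp$. Finally, I would argue that a map $f$ of flat cotorsion modules with $\Hom_R(R_\fp,f)$ an isomorphism for all $\fp\in\Spec R$ must itself be an isomorphism: this follows by descending induction on the dimension of the support, using the degreewise-split short exact sequences of the form \eqref{degreewise split} (now for a single module rather than a complex, i.e.\ peeling off $\prod_{\fp\in\max W}\Lambda^\fp$), together with \eqref{fc_lambda}–\eqref{fc_coloc}, since $\Lambda^\fp$ of $f$ agrees with $\Hom_R(R_\fp,f)$ for $\fp$ maximal in the relevant set of primes.

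The main obstacle I anticipate is the last step — showing that isomorphy after all the $\Hom_R(R_\fp,-)$ is enough to conclude isomorphy of $f$ itself. A single module $F=\prod_{\fq}T_\fq$ need not have finite-dimensional support, so one cannot naively induct; the fix is that the statement is really about a fixed $f$ and one can work inside the (possibly infinite) poset of primes appearing in $F$ and $G$, using that $\Lambda^\fp F$ and $\Lambda^\fp G$ only involve primes $\supseteq\fp$ while $\Hom_R(R_\fp,-)$ only involves primes $\subseteq\fp$, so an induction on codimension within a chain works componentwise. Alternatively — and this is probably cleaner — one checks surjectivity and injectivity of $f$ directly on the product decomposition: the $T_\fq$-to-$S_\fq$ component of $f$ is recovered, after applying $\Lambda^\fq$ to $\Hom_R(R_\fq,f)$, as an isomorphism by what was just shown, and a matrix/triangularity argument (primes are partially ordered) over the product then yields that $f$ is bijective. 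I would also need to be slightly careful that $\Lambda^\fp$ and $\Hom_R(R_\fp,-)$ are being applied to honest flat cotorsion modules so that the non-derived formulas \eqref{fc_lambda}, \eqref{fc_coloc} apply on the nose, which they do.
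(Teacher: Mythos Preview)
Your approach is genuinely different from the paper's, and the difficulty you flag at the end is real and not resolved by your proposed fixes.

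First, a concrete error: for a flat cotorsion module $F=\prod_\fq T_\fq$, formula~\eqref{fc_coloc} gives $\Hom_R(R_\fp,F)\cong\prod_{\fq\subseteq\fp}T_\fq$, which is a flat cotorsion $R_\fp$-module but \emph{not} the $\fp R_\fp$-adic completion of a free $R_\fp$-module unless $\fp$ is minimal. So Lemma~\ref{nakayama}(2) does not apply to $\Hom_R(R_\fp,f)$ as you claim. What it \emph{does} apply to is $\Lambda^\fp\Hom_R(R_\fp,f)$, and combined with~\eqref{fc_kappa} this gives you $(3)\Rightarrow(2)$---but not $(3)\Rightarrow(1)$.

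Second, the passage from $(2)$ to $(1)$ is where the content lies, and neither of your suggestions works without a finiteness hypothesis. Your induction via the filtration~\eqref{degreewise split} is valid when the set $W$ of primes occurring in $F$ and $G$ satisfies $\dim W<\infty$ (one checks that $f$ respects the filtration because $\Lambda^\fp F'=0$ for $\fp\in\max W$ by~\eqref{fc_lambda}, then applies the five lemma), but the lemma carries no such assumption. The triangularity idea also falls short: while $\Hom_R(T_\fq,S_\fp)=0$ for $\fq\not\supseteq\fp$, the components $T_\fq\to S_\fp$ with $\fq\supsetneq\fp$ can be nonzero (e.g.\ $\Hom_\ZZ(\ZZ_p,\QQ)\neq 0$), and for infinite triangular systems diagonal invertibility does not force invertibility.

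The paper sidesteps all of this with a three-line argument using cosupport: $f$ is an isomorphism iff $\cone(f)$ is acyclic iff $\cosupp_R\cone(f)=\varnothing$, the last equivalence being Neeman's theorem that $\D(R)$ is generated by the residue fields. Since $\cone(f)$ is a bounded complex of flat cotorsion modules, \eqref{cosupport characterization} together with \eqref{fc_lambda}--\eqref{fc_kappa} identifies $\cosupp_R\cone(f)=\varnothing$ directly with conditions $(2)$ and $(3)$. This is what lets the paper avoid any dimension hypothesis.
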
 
\begin{proof}
A complex $X$ satisfies $\cosupp_R X=\varnothing$ if and only if $X$ is acyclic\footnote{This is a direct consequence of Neeman's \cite[Theorem 2.8]{Nee92}, which says that $\D(R)$ is generated by the set $\{\kappa(\fp)\mid \fp\in \Spec R\}$.}, see for example \cite[Theorem 4.5]{BIK12}.
By definition, $f$ is an isomorphism if and only if $\cone(f)$ is acyclic. Hence $\cosupp_R(\cone(f))=\varnothing$ if and only if (1) holds. Finally, (\ref{cosupport characterization}) along with \eqref{fc_lambda}, \eqref{fc_coloc}, and \eqref{fc_kappa} yield this is also equivalent to (2) or (3).
\end{proof}

An $R$-complex $X$ is {\em minimal} if every homotopy equivalence $X\to X$ is an isomorphism in $\C(R)$; see \cite{AM02}.  Compare the next result to \cite[Theorems 3.5 and 4.1]{Tho19min}; conditions (2) and (5) here are new. Let us simply denote by $\widehat{R_\fp}$ the $\fp$-adic completion of $R_\fp$.

\begin{thm}\label{criteria}
Let $Y$ be a complex of flat cotorsion $R$-modules. The following conditions are equivalent:
\begin{enumerate}
\item[$(1)$] The complex $Y$ is minimal.
\item[$(2)$] If $Y= Y'\oplus Y''$ and $Y''$ is contractible, then $Y''=0$.
\item[$(3)$] For any $\fp\in \Spec R$, the complex $\kappa(\fp)\otimes_R \Hom_R(R_\fp,Y)$ has zero differential.
\item[$(4)$] For any $\fp\in \Spec R$, the complex $\Lambda^\fp \Hom_R(R_\fp,Y)$ has no direct summand of the form $0\to \widehat{R_\fp}\xrightarrow{=} \widehat{R_\fp}\to 0$.
\item[$(5)$] For any $\fp\in \Spec R$ and $i\in \mathbb{Z}$, the canonical map $T^{i+1} \to \coker(d^{i}_T)$ is a flat cover, where  $T=\Lambda^{\fp} \Hom_R(R_\fp,Y)$.
\end{enumerate}
\end{thm}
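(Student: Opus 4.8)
The plan is to prove the chain of equivalences $(1)\Leftrightarrow(2)\Leftrightarrow(3)\Leftrightarrow(4)\Leftrightarrow(5)$ by combining the local decomposition results of Section~\ref{section_decompose} with the cosupport criterion of Lemma~\ref{iso_p}. The conceptual heart is the equivalence $(1)\Leftrightarrow(3)$: a complex of flat cotorsion modules is minimal precisely when it has no ``trivial'' contractible pieces, and these are detected degreewise after applying $\kappa(\fp)\otimes_R\Hom_R(R_\fp,-)$.

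First I would prove $(3)\Rightarrow(2)\Rightarrow(1)$, which are the ``soft'' implications. For $(3)\Rightarrow(2)$: if $Y=Y'\oplus Y''$ with $Y''$ contractible, then $\kappa(\fp)\otimes_R\Hom_R(R_\fp,Y'')$ is a contractible complex with zero differential, hence the zero complex, for every $\fp$; by Lemma~\ref{iso_p} applied to the identity map of $Y''$ composed with a contracting homotopy (or more directly, since $\cosupp_R Y''=\varnothing$ forces $Y''$ acyclic, and an acyclic contractible complex of flat cotorsion modules whose every $\kappa(\fp)\otimes_R\Hom_R(R_\fp,-)$ vanishes must be zero by the structure $Y''^i\cong\prod_\fq T^i_\fq$ together with Nakayama-type reasoning via Remark~\ref{classic_iso}), we get $Y''=0$. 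For $(2)\Rightarrow(1)$: this is essentially \cite[Lemma 1.7 / Lemma 1.8]{AM02}---if $f\colon Y\to Y$ is a homotopy equivalence but not an isomorphism, then the standard trick produces a nonzero contractible direct summand of $Y$, contradicting $(2)$; I would cite this rather than reprove it.

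Next I would close the loop with $(1)\Rightarrow(3)$, which is where Theorem~\ref{minsplit} (or really Lemma~\ref{maxcase} in each fixed prime, assembled) does the work. Suppose $(3)$ fails, so for some $\fp$ the complex $\kappa(\fp)\otimes_R\Hom_R(R_\fp,Y)$ has nonzero differential in some degree. Using the reduction $T=\Lambda^\fp\Hom_R(R_\fp,Y)$---a complex of $\fp$-adic completions of free $R_\fp$-modules, by \eqref{fc_lambda} and \eqref{fc_coloc}---Lemma~\ref{trivialsummand}/Lemma~\ref{maxcase 0} extracts a nonzero contractible direct summand $0\to\widehat{R_\fp}\xrightarrow{=}\widehat{R_\fp}\to 0$ of $T$. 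The remaining (and I expect the main) obstacle is to lift this splitting of $T$ back to a nonzero contractible direct summand of $Y$ itself, contradicting minimality: one must argue that a contractible direct summand visible after applying $\Lambda^\fp\Hom_R(R_\fp,-)$ actually splits off $Y$. This can be handled by the pull-back/push-out and snake-lemma machinery already deployed in the proof of Theorem~\ref{minsplit} (using that the relevant short exact sequences are degreewise split and invoking \cite[Lemma 1.6]{AM02}); alternatively, $(3)\Leftrightarrow(1)$ follows from Theorem~\ref{minsplit} directly by writing $Y=Y'\oplus Y''$ with $Y''$ contractible and $\kappa(\fp)\otimes_R\Hom_R(R_\fp,Y')$ having zero differential for all $\fp\in W:=\bigcup_i\cosupp_R Y^i$, then noting $Y$ minimal forces $Y''=0$ via $(2)$, giving $(3)$; here one needs $\dim W<\infty$, which I would arrange by a colimit/degreewise argument or by observing cosupport of a single module lies in finitely many ``layers''---this subtlety about unbounded $W$ is worth a careful remark.

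Finally, $(3)\Leftrightarrow(4)\Leftrightarrow(5)$ are the ``local'' reformulations purely in terms of $T=\Lambda^\fp\Hom_R(R_\fp,Y)$. For $(3)\Leftrightarrow(4)$: $\kappa(\fp)\otimes_R\Hom_R(R_\fp,Y)=\kappa(\fp)\otimes_{R_\fp}T$ (using \eqref{fc_coloc} and \eqref{fc_kappa}), and Lemma~\ref{trivialsummand} shows a nonzero differential in $\kappa(\fp)\otimes T$ in degree $i$ is equivalent to the presence of a $0\to\widehat{R_\fp}\xrightarrow{=}\widehat{R_\fp}\to 0$ summand of $T$ straddling degrees $i,i+1$, while Lemma~\ref{nakayama}(2) ensures no ``hidden'' such summands. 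For $(4)\Leftrightarrow(5)$: this uses Remark~\ref{completion of flat module}---the map $\widehat{R_\fp^{(B)}}\to\kappa(\fp)^{(B)}$ is a flat cover over $R_\fp$, so $T^{i+1}\to\coker(d^i_T)$ being a flat cover is equivalent to the induced map on $\kappa(\fp)\otimes-$ being the cover of $\coker$, equivalently to there being no split-off trivial piece; I would spell out that a flat cover of a flat $R_\fp$-module $C$ is the $\fp$-completion of a free cover of $\kappa(\fp)\otimes_{R_\fp}C$ and that $T^{i+1}\to\coker(d^i_T)$ is such a cover iff $d^i_T$ has no unit entries after reduction mod $\fp$, which is condition $(4)$. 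I expect $(4)\Leftrightarrow(5)$ to be routine given the cited flat-cover description, so the real work remains the $(1)\Leftrightarrow(3)$ step and its dependence on finiteness of $\dim\bigcup_i\cosupp_R Y^i$.
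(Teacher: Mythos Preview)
Your proposal has a genuine gap at the step $(2)\Rightarrow(1)$. The reference \cite[Proposition~1.7]{AM02} provides only the direction $(1)\Rightarrow(2)$: a minimal complex has no nonzero contractible direct summand. The converse is \emph{not} a general fact about complexes and cannot be cited from there; there is no ``standard trick'' that extracts a nonzero contractible summand from a non-isomorphic self-homotopy-equivalence. For a concrete failure (outside the flat-cotorsion setting, but showing why no purely categorical lemma is available), the two-term complex $k[x]\xrightarrow{\,x\,}k[x]$ over $k[x]$ admits the non-invertible self-homotopy-equivalence given by multiplication by $1+x$, yet is indecomposable. That $(2)\Rightarrow(1)$ holds for complexes of flat cotorsion modules is one of the substantive assertions of the theorem, not an input you may assume.

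The paper runs the cycle in the opposite order, $(1)\Rightarrow(2)\Rightarrow(3)\Rightarrow(1)$, and the step you are missing is the direct argument $(3)\Rightarrow(1)$ via Lemma~\ref{iso_p}: if $f\colon Y\to Y$ is a homotopy equivalence then so is $\kappa(\fp)\otimes_R\Hom_R(R_\fp,f)$ for every $\fp$; by $(3)$ this is a homotopy equivalence on a complex with zero differential, hence an isomorphism, and Lemma~\ref{iso_p} then forces $f$ to be an isomorphism. No dimension hypothesis enters. Your push-out and snake-lemma machinery is precisely what the paper uses, but for $(2)\Rightarrow(3)$ rather than $(1)\Rightarrow(3)$: fix $\fp$, split off a contractible piece $X''$ from $X=\Hom_R(R_\fp,Y)$ using Theorem~\ref{minsplit}, push out along the projection $X\to X'$ to realize $X''$ as a direct summand of $Y$ itself, and conclude $X''=0$ from $(2)$. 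Because this is carried out one prime at a time, the $\dim W<\infty$ obstruction you correctly flag for a global application of Theorem~\ref{minsplit} does not arise. Your $(3)\Rightarrow(2)$ argument is correct (and pleasantly direct), and your treatment of $(3)\Leftrightarrow(4)\Leftrightarrow(5)$ is in line with the paper's, which packages the flat-cover equivalence into the separate Lemma~\ref{flat cover lemma}.
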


\begin{proof}
$(1)\Rightarrow (2)$: This follows by \cite[Proposition 1.7(3)]{AM02}.

$(2)\Rightarrow (3)$: Fix $\fp\in \Spec R$ and set $X=\Hom_R(R_\fp,Y)$. As $X$ is a complex of flat cotorsion $R_\fp$-modules, see \eqref{fc_coloc}, we may apply Theorem \ref{minsplit} to $X$ to obtain a decomposition $X=X'\oplus X''$ such that $\kappa(\fp)\otimes_R X'$ has zero differential and $X''$ is contractible. From the canonical projection $\pi:X\to X'$, form a push-out diagram:
\[\xymatrix{
0\ar[r] & X\ar[r]\ar[d]^{\pi}_{} &  Y\ar[r]^{}\ar[d]  & Y/X\ar[r]\ar@{=}[d] & 0\\
0 \ar[r] & X'\ar[r] & P \ar[r] & Y/X\ar[r] & 0
}\]
As in the proof of Theorem \ref{minsplit}, the snake lemma yields a split exact sequence
\[\xymatrix{
0\ar[r] & X''\ar[r] & Y\ar[r] & P\ar[r] &0.
}\]
The assumption (2) now implies $X''=0$, thus $X=X'$. Hence it holds that
$$\kappa(\fp)\otimes_R\Hom_R(R_\fp,Y) = \kappa(\fp)\otimes_{R} X= \kappa(\fp)\otimes_R X',$$
which has zero differential; (3) follows.

$(3)\Rightarrow (1)$: Let $f:Y\to Y$ be a homotopy equivalence. Thus $\kappa(\fp)\otimes_R \Hom_R(R_\fp,f)$ is also a homotopy equivalence for every $\fp\in \Spec R$. However, since the complex $\kappa(\fp)\otimes_R \Hom_R(R_\fp,Y)$ has zero differential, it follows that $\kappa(\fp)\otimes_R \Hom_R(R_\fp,f)$ is an isomorphism for every $\fp\in \Spec R$. Lemma \ref{iso_p} now yields that $f$ is an isomorphism.

$(3)\Leftrightarrow (4)$: Fix $\fp\in \Spec R$ and set $T=\Lambda^\fp \Hom_R(R_\fp,Y)$. The forward implication follows by replacing $Y$ by $T$ in the implication $(3)\Rightarrow(2)$ already proven above. Conversely, condition $(4)$ forces $d_T^i:T^i\to T^{i+1}$ per Lemma \ref{trivialsummand} to have the property that $\kappa(\fp)\otimes_R d_T^i=0$ for every $i\in \ZZ$.

$(3)\Leftrightarrow (5)$: Fix $\fp\in \Spec R$ and set $T=\Lambda^\fp \Hom_R(R_\fp,Y)$. For each $i\in \ZZ$, apply Lemma \ref{flat cover lemma} below to the exact sequence
\[\xymatrix{
T^{i}\ar[r]^{d_T^{i}} & T^{i+1} \ar[r] & \coker(d_T^i)\ar[r] & 0
}\]
to show that $T^{i+1} \to \coker(d^{i}_T)$ is a flat cover if and only if $\kappa(\fp)\otimes_Rd_T^i=0$.
\end{proof}

\begin{cor}\label{mindecomposition}
Assume $\dim R<\infty$. If $Y$ is a complex of flat cotorsion $R$-modules, then $Y=Y'\oplus Y''$ where $Y'$ is minimal and $Y''$ is contractible.
\end{cor}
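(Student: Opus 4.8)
The plan is to combine Theorem \ref{minsplit} with the minimality criterion of Theorem \ref{criteria}. Since $\dim R < \infty$, we may take $W = \Spec R$ in Theorem \ref{minsplit}; this is permissible because every flat cotorsion $R$-module $M$ satisfies $\cosupp_R M \subseteq \Spec R$ trivially, so the hypothesis $\cosupp_R Y^i \subseteq W$ holds for all $i$. Theorem \ref{minsplit} then produces a decomposition $Y = Y' \oplus Y''$ in which $Y''$ is contractible and $\kappa(\fp) \otimes_R \Hom_R(R_\fp, Y')$ has zero differential for every $\fp \in \Spec R$. The only thing left to check is that this last property is exactly what makes $Y'$ minimal.

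For that, I would invoke the equivalence $(1) \Leftrightarrow (3)$ of Theorem \ref{criteria}: a complex $Y'$ of flat cotorsion $R$-modules is minimal if and only if $\kappa(\fp) \otimes_R \Hom_R(R_\fp, Y')$ has zero differential for every $\fp \in \Spec R$. One small point to confirm is that $Y'$, being a direct summand of the complex $Y$ of flat cotorsion modules, is itself a complex of flat cotorsion modules — this is immediate since direct summands of flat cotorsion modules are again flat cotorsion (flatness and the Ext-orthogonality condition both pass to summands). Hence Theorem \ref{criteria} applies to $Y'$, and condition $(3)$ holding for $Y'$ gives condition $(1)$, namely that $Y'$ is minimal.

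So the proof is essentially a two-line deduction: apply Theorem \ref{minsplit} with $W = \Spec R$ to get the decomposition $Y = Y' \oplus Y''$ with $Y''$ contractible and the differential of $\kappa(\fp) \otimes_R \Hom_R(R_\fp, Y')$ vanishing for all $\fp$, then cite $(3) \Rightarrow (1)$ of Theorem \ref{criteria} to conclude $Y'$ is minimal.

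There is no real obstacle here — the work has all been done in the preceding two sections, and this corollary is precisely the payoff of having set up both the decomposition theorem and the minimality criteria in terms of the same invariant, namely the differential of $\kappa(\fp) \otimes_R \Hom_R(R_\fp, -)$. The only thing one must be slightly careful about is the bookkeeping that $\dim \Spec R = \dim R < \infty$ licenses the use of Theorem \ref{minsplit} with $W = \Spec R$, and that all the relevant cosupport containments are automatic. If anything deserves a sentence of justification it is why $Y'$ inherits the structure of a complex of flat cotorsion modules so that Theorem \ref{criteria} is applicable to it.
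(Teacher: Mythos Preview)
Your proposal is correct and matches the paper's own proof essentially verbatim: the paper simply writes ``Apply Theorem \ref{minsplit} and the equivalence $(1)\Leftrightarrow (3)$ of Theorem \ref{criteria}.'' Your additional remarks about $\dim(\Spec R)=\dim R<\infty$ and about $Y'$ being a complex of flat cotorsion modules are fine points of bookkeeping that the paper leaves implicit.
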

\begin{proof}
Apply Theorem \ref{minsplit} and the equivalence $(1)\Leftrightarrow (3)$ of Theorem \ref{criteria}.
\end{proof}

The next lemma is needed for the equivalence $(3)\Leftrightarrow (5)$ in Theorem \ref{criteria} above; notice that its proof shows an $R_\fp$-module $M$ having a presentation by flat cotorsion modules with cosupport in $\{\fp\}$ in fact has a resolution by such modules.
\begin{lem}\label{flat cover lemma}
Let $\fp\in \Spec R$ and let $T^0$ and $T^1$ be $\fp$-adic completions of free $R_\fp$-modules.
Suppose $T^0 \xrightarrow{f} T^1\xrightarrow{g} M\to 0$ is an exact sequence of $R_\fp$-modules. 
The map $g$ is a flat cover of $M$ over $R$ if and only if 
$\kappa(\fp)\otimes_R f=0$.
\end{lem}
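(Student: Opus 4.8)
The plan is to characterize when $g\colon T^1\to M$ is a flat cover in terms of the map $f$, using the basic theory of flat covers over the local ring $R_\fp$ together with the structural facts about $\fp$-adic completions of free modules recalled in Section \ref{section_decompose}. First I would reduce to the local ring $R_\fp$: since $T^0,T^1$ are flat cotorsion $R_\fp$-modules (they are $\fp$-adic completions of free $R_\fp$-modules), and a flat cover of $M$ over $R$ through an $R_\fp$-module is the same as a flat cover over $R_\fp$ — flatness and the cover property (minimality of the surjection) are local conditions at $\fp$, and $M$ is an $R_\fp$-module — it suffices to prove the statement with $R$ replaced by $(R_\fp,\fp R_\fp,\kappa(\fp))$. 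So I may assume $(R,\fm,k)$ is local and $T^0=\widehat{R^{(A)}}$, $T^1=\widehat{R^{(A')}}$ for index sets $A,A'$, with $k=\kappa(\fp)$.

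Next I would recall that over a commutative noetherian local ring, the completion $\widehat{R^{(A')}}$ is a flat cover of $k\otimes_R\widehat{R^{(A')}}\cong k^{(A')}$, and more generally the canonical surjection $\widehat{R^{(A')}}\twoheadrightarrow \coker f$ can be analyzed via the flat cover theory of Xu \cite{Xu96}: a surjection $T^1\to M$ with $T^1$ flat cotorsion is a flat cover precisely when $\ker(T^1\to M)$ is \emph{cotorsion} and contains no nonzero flat cotorsion direct summand of $T^1$, equivalently when the induced map on flat covers is an isomorphism. Here $\ker g=\im f$. The key computation is then that $k\otimes_R g$ is an isomorphism if and only if $g$ is a flat cover: indeed $T^1\to M$ being a flat cover of an $R_\fp$-module forces, by the structure of flat covers over local rings (cf.\ the proof of Theorem \ref{criteria}, where the analogous degreewise statement appears), the map $T^1/\fm T^1\to M/\fm M$ to be an isomorphism, i.e.\ $k\otimes_R g$ iso. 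Conversely, if $k\otimes_R g$ is an isomorphism then $k\otimes_R f = 0$, and by Lemma \ref{trivialsummand} (applied to $f$) together with Lemma \ref{nakayama}, the surjection $g$ has kernel with no contractible/flat-cotorsion summand splitting off, so $g$ is a flat cover. Finally, $k\otimes_R g$ is an isomorphism if and only if $k\otimes_R f=0$: right-exactness of $k\otimes_R-$ gives the exact sequence $k\otimes_R T^0\xrightarrow{k\otimes f} k\otimes_R T^1\xrightarrow{k\otimes g} k\otimes_R M\to 0$, and since $k\otimes_R g$ is automatically surjective, it is injective (hence an isomorphism) exactly when $\im(k\otimes_R f)=0$, i.e.\ $k\otimes_R f=0$. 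This last step also uses that $\widehat{R^{(A')}}\to M$ being a flat cover implies the minimal flat resolution is obtained by splicing, so one really does get $k\otimes_R T^1\xrightarrow{\cong} k\otimes_R M$; conversely the vanishing $k\otimes_R f=0$ propagates to show every such presentation extends to a resolution, giving the parenthetical remark about resolutions.

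The main obstacle I expect is pinning down precisely the equivalence ``$g$ is a flat cover $\iff k\otimes_R g$ is an isomorphism'' in the generality where $T^0,T^1$ are completions of \emph{infinitely generated} free modules — here one cannot appeal to Nakayama's lemma for finitely generated modules, and must instead use the completion-theoretic input of Remark \ref{classic_iso} (that $T/\fm^nT\cong P/\fm^nP$) together with Xu's characterization of flat covers and the projectivity of $T^1/\fm T^1$ over $k$. The delicate point is ensuring that a flat cover of an $R_\fp$-module, a priori only known to be a minimal flat preenvelope-type surjection, genuinely induces an isomorphism modulo $\fm$; this is exactly the local analogue of the degreewise reasoning already carried out in the proof of Theorem \ref{criteria}, so I would cross-reference that argument rather than redo it. Everything else is bookkeeping with right-exactness of $k\otimes_R -$ and the structure theorems for flat cotorsion modules from Section \ref{section_decompose}.
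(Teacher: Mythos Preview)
There are two genuine gaps.

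First, a circularity: you plan to cross-reference the proof of Theorem~\ref{criteria} for the implication ``$g$ is a flat cover $\Rightarrow \kappa(\fp)\otimes_R g$ is an isomorphism'', but the equivalence $(3)\Leftrightarrow(5)$ there is proved precisely by invoking Lemma~\ref{flat cover lemma}. This direction must be done directly; the paper argues by contraposition via Lemma~\ref{trivialsummand}, splitting off a summand $\widehat{R_\fp}\xrightarrow{=}\widehat{R_\fp}$ from $T^0\xrightarrow{f}T^1$ when $\kappa(\fp)\otimes_R f\neq 0$ and exhibiting a non-invertible endomorphism of $T^1$ commuting with $g$.

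Second, and more seriously, in the direction ``$\kappa(\fp)\otimes_R f=0\Rightarrow g$ is a flat cover'' you establish only right minimality (via Lemma~\ref{nakayama}(2)) and never verify that $g$ is a flat \emph{precover}. You state the criterion correctly---$g$ is a flat cover iff $\ker g$ is cotorsion and $g$ is right minimal---but your sentence ``the surjection $g$ has kernel with no contractible/flat-cotorsion summand splitting off, so $g$ is a flat cover'' addresses only the second clause. Nothing forces $\ker g=\im f$ to be cotorsion: quotients of cotorsion modules need not be cotorsion, and you cannot compare with an abstractly existing flat cover of $M$, since that cover need not factor through $T^1$ until you already know $g$ is a precover. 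The paper's key step is to extend the given presentation to an exact sequence
\[
\cdots\to T^{-2}\to T^{-1}\to T^0\xrightarrow{f}T^1\xrightarrow{g}M\to 0
\]
with each $T^i$ a $\fp$-adic completion of a free $R_\fp$-module (via a completion diagram-chase showing $\ker f$ is a quotient of some such $T^{-1}$); the truncated complex then resolves $\ker g$, and Remark~\ref{semi-cotorsion property} together with \eqref{HomKD2} yields that $\ker g$ is cotorsion. You mention this resolution only as an afterthought for ``the parenthetical remark about resolutions'', not realizing it is the heart of the converse implication; you have also inverted which of the two directions is the delicate one.
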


\begin{proof}
If $\kappa(\fp)\otimes_R f\not=0$, then Lemma \ref{trivialsummand} implies the complex $T^0\xrightarrow{f} T^1$ has a direct summand $\widehat{R_\fp}\xrightarrow{=}\widehat{R_\fp}$.
The exact sequence $T^0 \xrightarrow{f} T^1\xrightarrow{g} M\to 0$ thus gives a decomposition $g=\begin{bmatrix}0&h\end{bmatrix} :\widehat{R_\fp}\oplus T'\to M$, where $\widehat{R_\fp}\oplus T'=T^1$. The endomorphism $0\oplus \id_{T'}:\widehat{R_\fp}\oplus T'\to \widehat{R_\fp}\oplus T'$ is not an isomorphism, yet it satisfies $g\cdot (0\oplus \id_{T'})=0\oplus h=g$ ; hence $g$ is not a flat cover.

Conversely, suppose that $\kappa(\fp)\otimes_R f=0$; this is equivalent to saying that $\kappa(\fp)\otimes_R g$ is an isomorphism. Suppose that there is a commutative diagram:
$$\xymatrix{
&T^1\ar[ld]\ar[d]^{g}\\
T^1\ar[r]_{g}& M
}
$$
By assumption, all maps in this diagram become isomorphisms upon application of $\kappa(\fp)\otimes_R -$, and so Lemma \ref{nakayama}(2) implies that the map $T^1\to T^1$ is an isomorphism. Hence it remains to show that $g$ is a flat precover, or equivalently, $\ker(g)$ is cotorsion. 
To show this, we will prove that there is an exact sequence 
\begin{align}\label{fc resolution}
\xymatrix{\cdots \ar[r] & T^{-2} \ar[r] & T^{-1} \ar[r] &  T^{0} \ar[r]^{f} & T^{1}\ar[r]^{g} & M\ar[r] & 0},
\end{align}
where all $T^{i}$ are $\fp$-adic completions of free $R_\fp$-modules. Then the truncated complex $\cdots \to T^{-2} \to T^{-1} \to  T^{0}\to 0$ is a resolution of $\ker(g)$, and we can easily verify that $\ker(g)$ is cotorsion, by using Remark \ref{semi-cotorsion property} and \eqref{HomKD2}.

Set $K=\ker(f)$. The $\fp$-adic completion functor induces an isomorphism on both $T^0$ and $T^1$, hence we obtain the following commutative diagram:
\[\xymatrix{
0\ar[r] & K \ar[r]\ar[d] & T^0\ar[r]^f\ar[d]^{=} & T^1\ar[d]^{=}\\
& K^\wedge_\fp \ar[r] & T^0\ar[r]^{f} & T^1
}\] 
By a simple diagram chase, the image of the map $K_\fp^\wedge\to T^0$ is precisely $K$, hence the second row is exact. Choose a surjection from a free $R_\fp$-module $F \to K$; this induces a surjection $F_\fp^\wedge=T^{-1}\to K_\fp^\wedge$ by a standard argument (see the proof of \cite[Theorem 8.1]{Mat89}), hence we obtain a surjection $T^{-1}\to  K$. Repeating this process, we can construct an exact sequence as in \eqref{fc resolution}.
\end{proof}

The existence of the exact sequence \eqref{fc resolution} is also a consequence of a result of Dwyer and Greenlees \cite[Proposition 5.2]{DG02}, which implies that $M\cong \LLambda^{\fp}M$ in this setting.

We end the section with an example showing that statement (5) in Theorem \ref{criteria} may be the best possible in terms of flat covers:
\begin{exa}\label{not_fc}
Let $k$ be an uncountable field and $R=k[x,y]$. The minimal pure-injective resolution of $R$ is a minimal complex of flat cotorsion $R$-modules of the form $0\to P^0\xrightarrow{d^0} P^1\xrightarrow{d^1} P^2\to 0$; see \cite[Remark 3.3 and Theorem 4.8]{Nak19a}. Although $P^2=\coker(d^0)$, the map $d^1:P^1\to P^2$ is not a flat cover. 

A similar example can be constructed for the ring $k[x,y]_{(x,y)}$, using Gruson's \cite[Proposition 3.2]{Gru73}.
\end{exa}

\section{Functorial constructions of semi-flat-cotorsion replacements}\label{section_constructions}
\noindent
In this section, we give two functorial ways to construct a chain map from a complex of flat modules to a complex of flat cotorsion modules such that its mapping cone is pure acyclic; recall that a complex $P$ is \emph{pure acyclic} if $M\otimes_R P$ is acyclic for any $R$-module $M$. In particular, this approach yields a replacement of a semi-flat complex that is both semi-flat and semi-cotorsion (defined below).

Although the setting of this first construction is a bit restricted, the construction itself is not complicated; moreover, it plays a key role in Example \ref{counterexample} below.
\begin{con}\label{Dim1orCountable}
Assume $\dim R\leq 1$ or $R$ is countable.
Let $P$ be an $R$-complex of projective modules. 
Let $W$ be the set of maximal ideals of $R$. The canonical map
$P^i\to \prod_{\fm\in W} \Lambda^{\fm}P^i$ is a pure-injective envelope for each $i\in \ZZ$, see \cite[Remark 6.7.12]{EJ00}.
Moreover, it follows from \cite[Theorem 8.4.12, Corollary 8.5.10]{EJ00}, \cite[II, Corollary 3.3.2]{GR71}, and \cite[Theorem 5.8]{Jen72}
that the pure-injective dimension of $P^i$ is at most $1$ (see also Remark \ref{minimal pure-injective resolutions}) and so there is a short exact sequence of complexes
\[\xymatrix{
0\ar[r] & P \ar[r] & \prod_{\fm\in W} \Lambda^{\fm}P \ar[r] & (\prod_{\fm\in W} \Lambda^{\fm}P)/P \ar[r] & 0
}\]
where every term of $(\prod_{\fm\in W} \Lambda^{\fm}P)/P$ is a flat cotorsion module, see \cite[\S 8.5]{EJ00}. Regarding the above sequence as a double complex, denote its total complex by $X_P$. The rows of this double complex are pure exact, that is, they are exact upon application of $M\otimes_R-$ for any $R$-module $M$. A basic argument \cite[Theorem 12.5.4]{KS06} of double complexes shows that $X_P$ is pure acyclic. On the other hand, there is a commutative diagram
\[\xymatrix{
P \ar[r] \ar[d] & 0\ar[d] \\
\prod_{\fm\in W} \Lambda^{\fm}P \ar[r] & (\prod_{\fm\in W} \Lambda^{\fm}P)/P
}\]
where all arrows express the canonical chain maps.
Regarding both rows as double complexes, this morphism between double complexes naturally induces a chain map $P\to Y_P$, where $Y_P$ denotes the total complex of the second row. The mapping cone of the canonical map $P\to Y_P$ can be identified with $X_P$, hence $P\to Y_P$ is a quasi-isomorphism with pure acyclic mapping cone. Moreover, $Y_P$ is a complex of flat cotorsion $R$-modules.
\end{con}

A complex $P$ is \emph{semi-projective} if $\Hom_R(P,-)$ preserves acyclicity and $P^i$ is projective for every $i\in \ZZ$;
a complex $F$ is \emph{semi-flat} if $-\otimes_RF$ preserves acyclicity and $F^i$ is flat for every $i\in \ZZ$. 
Semi-projective complexes and pure acyclic complexes of flat modules are both semi-flat.
It follows that if $P$ is semi-projective in Construction \ref{Dim1orCountable}, then $Y_P$ is semi-flat. 

A complex $C$ is \emph{semi-cotorsion} if $\Hom_R(-,C)$ preserves acyclicity of pure acyclic complexes of flat modules and $C^i$ is cotorsion for every $i\in \ZZ$ (see Appendix \ref{appendix_sfc}).
By the construction, $Y_P$ consists of flat cotorsion $R$-modules. The next remark shows that $Y_P$ is also semi-cotorsion. 

\begin{rmk}\label{semi-cotorsion property}
Let $\fp\in \Spec R$. As $R$ is noetherian, $\Lambda^\fp$ is left adjoint to the inclusion of $\fp$-adically complete modules into $\Mod{R}$ (this follows from \cite[Theorem 1.1]{Sim90} which implies that $\Lambda^\fp$ is idempotent, see also \cite[Theorem 2.2.5]{Str90} and \cite[\S 4.1]{KS06}); in addition, the functor $-\otimes_R R_\fp$ is left adjoint to the inclusion of $\fp$-local modules into $\Mod{R}$. Hence, if $M$ is any $R$-module and $T_\fp$ is the $\fp$-adic completion of a free $R_\fp$-module, then 
$\Hom_R(M,T_\fp)\cong \Hom_{R}(\Lambda^\fp(M_\fp),T_\fp)$.

Let $T$ be a complex of $\fp$-adic completions of free $R_\fp$-modules. For any pure acyclic complex $X$ of flat $R$-modules, we have $\Hom_R(X,T)\cong \Hom_R(\Lambda^\fp(X_\fp),T)$, and so $\Hom_R(X,T)$ is acyclic because $\Lambda^\fp(X_\fp)$ is contractible.
To see this, we only need to notice that all cycle modules of $X$ are flat, and 
$\Lambda^\fp(-\otimes_R R_\fp)$ sends a short exact sequence of flat $R$-modules to a split short exact sequence of flat cotorsion $R$-modules, see \cite[\S 4, p. 69]{Lip02} and the second paragraph of Remark \ref{completion of flat module}. Therefore $T$ is semi-cotorsion.

Let $W$ be a subset of $\Spec R$ with $\dim W<\infty$ and let $Y$ be a complex of flat cotorsion $R$-modules with $\cosupp Y^i\subseteq W$ for every $i\in \mathbb{Z}$. 
Then we can easily show that $Y$ is semi-cotorsion by an inductive argument on $\dim W$, using the above fact, \eqref{degreewise split}, and \eqref{fc_dim0}. In particular, it follows that all complexes of flat cotorsion $R$-modules are semi-cotorsion when $\dim R<\infty$.

On the other hand, when $R$ is countable but of infinite Krull dimension, we can instead recover  finiteness of projective dimension of flat modules, see Remark \ref{minimal pure-injective resolutions} and \eqref{sup_pd_sup_fd}. From this, a standard argument shows that an acyclic complex of cotorsion modules has cotorsion cycle modules. Consequently any complex of cotorsion modules is semi-cotorsion, as its semi-injective resolution (see Appendix \ref{appendix_sfc}) yields a mapping cone which is semi-cotorsion.
\end{rmk}

We define a complex $Y$ to be \emph{semi-flat-cotorsion} if it is both semi-flat and semi-cotorsion.
The above remark shows that any semi-flat complex of flat cotorsion $R$-modules is semi-flat-cotorsion as long as $R$ is of finite Krull dimension or countable.\footnote{In fact, it follows from \cite{Sto14} or \cite{BIE19} that every complex of flat cotorsion modules is semi-cotorsion without any additional assumptions on the ring, and so every semi-flat complex of flat cotorsion modules is semi-flat-cotorsion; see Lemma \ref{semiflat_cot}. We provide the more elementary observation above for the reader's convenience.}
In particular, if $P$ is assumed to be semi-projective (or semi-flat) in Construction \ref{Dim1orCountable}, then the complex $Y_P$ constructed therein is semi-flat-cotorsion.

Assume $\dim R<\infty$. We now aim to give the construction of a functor from \cite{NY18}, which in particular sends semi-flat $R$-complexes to semi-flat-cotorsion ones. If $W$ is a subset of $\Spec R$ with $\dim W=0$, then write $\bar{\lambda}^W=\prod_{\fp\in W} \Lambda^\fp(-\otimes_R R_\fp)$. There is a canonical morphism $\id_{\C(R)}\to \bar{\lambda}^W$; see \cite[Notation 7.1]{NY18}. For a non-empty subset $W$ of $\Spec R$, a family of subsets $\mathbb{W}=\{W_i\}_{0\leq i \leq n}$ is a \emph{system of slices of $W$} if $W=\bigcup_{0\leq i\leq n}W_i$, the intersections $W_i\cap W_j$ are empty for $i\not=j$, $\dim W_i=0$ for $0\leq i\leq n$, and $W_i$ is specialization-closed in $W$; see \cite[Definition 7.6]{NY18}.

\begin{con}\label{DimFiniteExists}
Assume $\dim R=d<\infty$. Let $W$ be a non-empty subset of $\Spec R$ ordered by inclusion. Denote by $W_0$ the set of maximal elements in $W$. If $W\setminus W_0$ is not empty, then define $W_1$ to be the maximal elements of $W\setminus W_0$. Iterating this process, we obtain a system of slices $\mathbb{W}=\{W_i \mid 0\leq i\leq n\}$ of $W$.  The natural transformations $\id_{\C(R)}\to \bar{\lambda}^{W_i}$  yield (see \cite[Remark 7.3]{NY18}) a \v{C}ech complex of functors:
\[L^{\mathbb{W}}=\left(\xymatrix{
\displaystyle{\prod_{0\leq i\leq n}\bar{\lambda}^{W_{i}}}\ar[r] &
\displaystyle{\prod_{0\leq i<j\leq n}\bar{\lambda}^{W_{j}}\bar{\lambda}^{W_{i}}}\ar[r] &
\cdots \ar[r] &
\displaystyle{\bar{\lambda}^{W_n}\cdots\bar{\lambda}^{W_0}}
}\right).\]
For an $R$-complex $X$, we naturally get a double complex $L^{\mathbb{W}}X$, and the canonical chain maps $X\to \bar{\lambda}^{W_i}X$ induce a morphism $X\to L^{\mathbb{W}}X$ of double complexes. Totalization yields a natural chain map $X\to \tot L^{\mathbb{W}}X$. 

Set $\Adelic^{\mathbb{W}}=\tot L^{\mathbb{W}}$, as in \cite{Nak19}; we see that $\Adelic^{\mathbb{W}}$ is a functor on $\C(R)$ and there is a natural transformation $a^{\mathbb{W}}:\id_{\C(R)}\to \Adelic^{\mathbb{W}}$ (this was written as $\ell^\mathbb{W}$ in \cite{NY18}).

If $M$ is an $R$-module, then $\Adelic^{\mathbb{W}}M=L^{\mathbb{W}}M$.
If $F$ is a flat $R$-module, then the $R$-module $\bar{\lambda}^{W_i}F=\prod_{\fp\in W_i}\Lambda^{\fp}(F_\fp)$ is flat cotorsion, see the second paragraph of Remark \ref{completion of flat module}; thus if $X$ is a complex of flat $R$-modules, 
then $\Adelic^{\mathbb{W}}X$ is a complex of flat cotorsion $R$-modules.

Assume now that $W=\Spec R$, so $d=n$. For each flat $R$-module $X^i$, it follows from \cite[Corollary 7.12]{NY18} that 
$a^{\mathbb{W}}X^i:X^i\to \Adelic^{\mathbb{W}}X^i$ is a (pure) quasi-isomorphism; we give a more elementary proof of this in Fact \ref{quasi_iso_simple} below. Moreover, 
$\cone(a^{\mathbb{W}}X)$ is the totalization of the double complex
\[\xymatrix@C=1.5em{
0\ar[r] & X\ar[r] & \displaystyle{\prod_{0\leq i\leq d}\bar{\lambda}^{W_{i}}}X\ar[r] & \displaystyle{\prod_{0\leq i<j\leq d}\bar{\lambda}^{W_{j}}\bar{\lambda}^{W_{i}}}X\ar[r] &\cdots \ar[r] &  \displaystyle{\bar{\lambda}^{W_d}\cdots\bar{\lambda}^{W_0}}X\ar[r] & 0,
}\]
whose rows are pure exact, and so the totalization $\cone(a^{\mathbb{W}}X)$ is pure acyclic; see for example \cite[Theorem 12.5.4]{KS06}. It then follows that $\Adelic^{\mathbb{W}}$ sends any semi-flat complex to a semi-flat complex of flat cotorsion $R$-modules (cf. \cite[Remark 7.13]{NY18}), that is, a semi-flat-cotorsion complex per Remark \ref{semi-cotorsion property}.
\end{con}

A \emph{semi-flat-cotorsion replacement} of an $R$-complex $X$ is an isomorphism in $\D(R)$ between $X$ and a semi-flat-cotorsion $R$-complex; see Definition \ref{sfcreplacement}.
\begin{thm}\label{exists_semi}
Assume $\dim R<\infty$. Every $R$-complex has a minimal semi-flat-cotorsion replacement in $\D(R)$. 
\end{thm}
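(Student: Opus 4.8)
The plan is to combine the functorial construction of Construction~\ref{DimFiniteExists} with the decomposition result of Corollary~\ref{mindecomposition}. First I would take an arbitrary $R$-complex $X$ and replace it by a semi-flat $R$-complex $F$ together with a quasi-isomorphism $F \xrightarrow{\sim} X$; such a semi-flat (indeed semi-projective) replacement always exists over any ring, for instance by the standard construction of a $K$-projective resolution, and is unique up to homotopy equivalence. Then I would apply the functor $\Adelic^{\mathbb W}$ from Construction~\ref{DimFiniteExists} with $W = \Spec R$ to obtain a semi-flat-cotorsion complex $\Adelic^{\mathbb W}F$ together with the natural quasi-isomorphism $a^{\mathbb W}F : F \to \Adelic^{\mathbb W}F$, whose mapping cone is pure acyclic as recorded in that construction. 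Composing, we get an isomorphism $X \cong \Adelic^{\mathbb W}F$ in $\D(R)$ realized by a semi-flat-cotorsion complex, so a (not necessarily minimal) semi-flat-cotorsion replacement exists.

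Next I would pass to minimality. Since $\dim R < \infty$, every complex of flat cotorsion $R$-modules is semi-cotorsion by Remark~\ref{semi-cotorsion property}; in particular, if $Y$ is a semi-flat-cotorsion complex and $Y = Y' \oplus Y''$ with $Y''$ contractible, then $Y'$ remains semi-flat (a direct summand of a semi-flat complex is semi-flat) and remains a complex of flat cotorsion modules, hence is again semi-flat-cotorsion. Applying Corollary~\ref{mindecomposition} to $Y = \Adelic^{\mathbb W}F$ yields a decomposition $Y = Y' \oplus Y''$ with $Y'$ minimal and $Y''$ contractible. The inclusion and projection $Y' \hookrightarrow Y \twoheadrightarrow Y'$ are homotopy equivalences since $Y''$ is contractible, so $Y'$ is isomorphic to $Y$ in $\K(R)$, hence also in $\D(R)$. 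Therefore $X \cong Y'$ in $\D(R)$ with $Y'$ a minimal semi-flat-cotorsion complex, which is the desired replacement.

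The two potential obstacles are both mild here. One is the bookkeeping to confirm that the summand $Y'$ produced by Corollary~\ref{mindecomposition} is genuinely semi-flat-cotorsion and not merely a minimal complex of flat cotorsion modules: this is handled by the closure of semi-flatness under direct summands together with the fact that all complexes of flat cotorsion modules are semi-cotorsion when $\dim R < \infty$ (Remark~\ref{semi-cotorsion property}). The other is simply invoking the existence of a semi-projective resolution of an arbitrary unbounded complex; I would cite the standard reference for this rather than reprove it. I expect no serious difficulty, and the proof is essentially a two-line assembly of the results already established in Sections~\ref{section_decompose}--\ref{section_constructions}.
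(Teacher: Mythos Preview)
Your proposal is correct and follows essentially the same route as the paper: take a semi-flat resolution $F\to X$, apply Construction~\ref{DimFiniteExists} to obtain the semi-flat-cotorsion complex $\Adelic^{\mathbb W}F$, then use Corollary~\ref{mindecomposition} to split off a contractible summand and observe that the remaining minimal summand $Y'$ is still semi-flat (hence semi-flat-cotorsion). The paper's proof is precisely this two-line assembly, and your discussion of the bookkeeping (closure of semi-flatness under summands, Remark~\ref{semi-cotorsion property} for semi-cotorsion) mirrors the paper's reasoning.
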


\begin{proof}
Let $X$ be an $R$-complex with semi-flat resolution $F\to X$.  Construction \ref{DimFiniteExists} yields a semi-flat-cotorsion replacement $F\xrightarrow{} \Adelic^{\mathbb{W}}F$. By Corollary \ref{mindecomposition}, the complex $\Adelic^{\mathbb{W}}F$ decomposes as $\Adelic^{\mathbb{W}}F=Y'\oplus Y''$ where $Y'$ is a minimal complex of flat cotorsion $R$-modules and $Y''$ is contractible. As $\Adelic^{\mathbb{W}}F$ is semi-flat, so is $Y'$. We then have a diagram of quasi-isomorphisms $Y'\xleftarrow{} F\xrightarrow{} X$, where $Y'$ is a minimal semi-flat-cotorsion $R$-complex.
\end{proof}

\begin{rmk}\label{minimal_env_cov}
Over a commutative noetherian ring, the notion of minimal semi-flat-cotorsion replacements is a common generalization of minimal pure-injective resolutions of flat modules and minimal flat resolutions of cotorsion modules. Indeed, if $M$ is a flat $R$-module, then its minimal pure-injective resolution $P$ (built from pure-injective envelopes) consists of flat cotorsion modules \cite[\S 8.5]{EJ00}, is semi-flat as the mapping cone of $M\to P$ is pure acyclic, and is minimal \cite[Theorem 4.1]{Tho19min}. See also Theorem \ref{criteria} and \cite[Proposition 8.5.26]{EJ00}. Similarly, if $M$ is cotorsion, then its minimal flat resolution $F$ (built from flat covers) consists of flat cotorsion modules \cite[Corollary 5.3.26]{EJ00} and is minimal \cite[Theorem 4.1]{Tho19min}; see also \cite[\S 5.2]{Xu96}. Finally, a minimal semi-flat-cotorsion replacement (if it exists) is unique up to isomorphism in $\C(R)$; see Lemma \ref{sfc_unique}. 
\end{rmk}

In the precedent work \cite{NY18}, the \v{C}ech complex $L^{\mathbb{W}}$ naturally appeared as a consequence of the (generalized) Mayer--Vietoris triangles \cite[Theorem 3.15]{NY18}. 
For the reader's convenience, we provide an alternative proof of the following fact from \cite{NY18}, which we used in Construction \ref{DimFiniteExists}.

\begin{fct}\label{quasi_iso_simple}
Assume $\dim R<\infty$ and let $\mathbb{W}$ be a system of slices of $\Spec R$. If $F$ is a flat $R$-module, then the map $a^{\mathbb{W}}F:F\to \Adelic^{\mathbb{W}}F$ is a quasi-isomorphism. In particular, the mapping cone of $a^{\mathbb{W}}F$ is a pure acyclic complex of flat $R$-modules.
\end{fct}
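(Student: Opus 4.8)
The plan is to split the statement into its two parts --- that $a^{\mathbb{W}}F$ is a quasi-isomorphism, and that its mapping cone is a pure acyclic complex of flat $R$-modules --- reduce the first to a bare acyclicity assertion, obtain flatness and pure-acyclicity for free, and then prove acyclicity by induction on the number of slices.

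First, $\cone(a^{\mathbb{W}}F)$ is, up to a shift, the totalization of the augmented \v{C}ech complex
\[
0\to F\to \textstyle\prod_{0\le i\le n}\bar\lambda^{W_i}F\to \prod_{0\le i<j\le n}\bar\lambda^{W_j}\bar\lambda^{W_i}F\to\cdots\to\bar\lambda^{W_n}\cdots\bar\lambda^{W_0}F\to 0.
\]
Each term in nonnegative \v{C}ech-degree is a product over primes of $\fq$-adic completions of flat modules, hence flat cotorsion by the second paragraph of Remark \ref{completion of flat module}, and $F$ is flat; so $\cone(a^{\mathbb{W}}F)$ is a bounded complex of flat $R$-modules. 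I would then invoke the elementary observation that a bounded-above acyclic complex of flat modules is automatically pure acyclic: working downward from the top nonzero term, each cycle module is seen to be flat and each resulting short exact sequence is pure, so tensoring with any $R$-module preserves exactness. Granting this, the ``In particular'' clause follows from acyclicity, and it remains only to prove that $\cone(a^{\mathbb{W}}F)$ is acyclic, i.e.\ that $a^{\mathbb{W}}F$ is a quasi-isomorphism. (A direct attack via support --- $\cone(a^{\mathbb{W}}F)$ is acyclic iff $\kappa(\fp)\otimes_R-$ makes it so for every $\fp$ --- is awkward here, because localization and tensor products do not commute with the infinite products appearing in the $\bar\lambda^{W_i}$, so I would instead exploit the structure of the \v{C}ech complex.)

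For acyclicity I would induct on the number $n$ of slices (equivalently on $d=\dim R$). Splitting the augmented \v{C}ech double complex of $\mathbb{W}=\{W_0,\dots,W_n\}$ according to whether or not the outermost index $n$ occurs in a subset identifies $\cone(a^{\mathbb{W}}F)$ with the mapping cone of the chain map
\[
a^{W_n}_C\colon C\longrightarrow \bar\lambda^{W_n}C,\qquad C:=\cone\big(a^{\mathbb{W}''}F\big),
\]
where $\mathbb{W}''=\{W_0,\dots,W_{n-1}\}$ is a system of slices of the specialization-closed subset $V=W_0\cup\cdots\cup W_{n-1}$, of strictly smaller dimension. Thus $\cone(a^{\mathbb{W}}F)$ is acyclic if and only if $a^{W_n}_C\colon C\to\bar\lambda^{W_n}C$ is a quasi-isomorphism. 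The base case $n=0$ forces $R$ to be artinian, so $R=\prod_{\fm}R_\fm$ is a finite product of complete local rings, $\bar\lambda^{W_0}F=\prod_\fm F_\fm=F$, and $a^{\mathbb{W}}F=\id_F$; there is nothing to prove.

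The inductive step is the main obstacle. One cannot simply induct with the hypothesis that $a^{\mathbb{W}''}F$ is a quasi-isomorphism: for a system of slices of a proper specialization-closed subset this fails (already $R\to\widehat R$ need not be a quasi-isomorphism). Instead the induction hypothesis has to be strengthened --- after first extending the pure-acyclicity reduction of the second paragraph from modules to bounded complexes of flat modules --- so as to record that $C=\cone(a^{\mathbb{W}''}F)$, while a priori only a bounded complex of flat modules, is in addition concentrated on the complementary layer $W_n$ in a derived sense (in the $\dim R=1$ prototype, $C$ is a $\kappa(\fp)$-vector space for the generic point), and hence that $\bar\lambda^{W_n}$ acts on it as the derived identity. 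Verifying this --- tracking through the recursion that $\cone(a^{\mathbb{W}''}(-))$ annihilates everything supported on $V$, and checking that $\bar\lambda^{W_n}$ is the identity on the resulting class using the formulas \eqref{fc_lambda}--\eqref{fc_kappa} together with Remark \ref{completion of flat module} (so that the comparison reduces to a single completion $\widehat{R_\fp}$, where it is the classical exactness underlying the arithmetic fracture square) --- is where the real work lies; once the induction hypothesis is set up to say the right thing, the step itself, the base case, and the final descent are routine.
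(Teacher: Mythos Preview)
Your reduction to bare acyclicity (pure acyclicity being automatic for a bounded acyclic complex of flat modules) is fine, and your recursive splitting of $\cone(a^{\mathbb W}F)$ is valid. The gap lies in what you defer. You dismiss the support approach because tensor products do not commute with the infinite products in $\bar\lambda^{W_i}$, but this is precisely the route the paper takes: one first applies $R/\fp\otimes_R-$, which \emph{does} commute with products since $R/\fp$ is finitely presented, and this already annihilates every factor $\Lambda^{\fq}(G_\fq)$ with $\fp\not\subseteq\fq$, leaving at most one surviving term on which the subsequent passage to $\kappa(\fp)$ is innocuous. With this in hand the paper formulates exactly the strengthened induction hypothesis you are reaching for---for any $W\subseteq\Spec R$ with system of slices $\mathbb W$ and any flat $F$, one has $W\cap\supp_R\cone(a^{\mathbb W}F)=\varnothing$---and proves it directly.

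The paper also peels off the \emph{innermost} slice $W_0$ rather than your outermost $W_n$, and this is not cosmetic. Splitting by whether the index $0$ occurs exhibits $\cone(a^{\mathbb W}F)$ as the totalization of a map $C'\to C''$ with $C'=\cone(a^{\mathbb U}F)$ and $C''=\cone(a^{\mathbb U}\bar\lambda^{W_0}F)$ for $\mathbb U=\{W_1,\dots,W_n\}$; both pieces are again of the form ``smaller \v{C}ech construction applied to a single flat module,'' so the induction hypothesis applies to each directly. Your splitting instead produces $\cone(C\to\bar\lambda^{W_n}C)$ with $C=\cone(a^{\mathbb W''}F)$, and the step you label ``the real work''---showing $\bar\lambda^{W_n}$ is the derived identity on $C$---requires computing $\Lambda^{\fp}(M_\fp)$ for $M$ an infinite product of completions (the formulas \eqref{fc_lambda}--\eqref{fc_kappa} handle $\Lambda^\fp M$ and $\Hom_R(R_\fp,M)$, not $M\otimes_R R_\fp$). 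That computation again reduces, via $R/\fp^n$ being finitely presented, to the very observation you set aside. So your approach can be completed, but only by the device you dismissed; it is not a genuine alternative to the paper's argument, just a longer path to the same key point.
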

\begin{proof}
Set $C=\cone(a^{\mathbb{W}}F)$, which by definition is of the following form:
\[\xymatrix@C=1.5em{
0\ar[r] & F\ar[r] & \displaystyle{\prod_{0\leq i\leq n}\bar{\lambda}^{W_{i}}}F\ar[r] & \displaystyle{\prod_{0\leq i<j\leq n}\bar{\lambda}^{W_{j}}\bar{\lambda}^{W_{i}}}F\ar[r] &\cdots \ar[r] &  \displaystyle{\bar{\lambda}^{W_n}\cdots\bar{\lambda}^{W_0}}F\ar[r] & 0.
}\]
The map $a^{\mathbb{W}}F$ is a quasi-isomorphism if and only if $C$ is acyclic, and so it will be enough to show $\supp_RC=\varnothing$; see \cite[Lemma 2.6]{Fox79}. The statement will then follow from the next more general claim, by setting $W=\Spec R$:

\medskip
\noindent
\emph{Claim:} Let $W$ be a non-empty subset of $\Spec R$ with $\dim W=n<\infty$ and let $\mathbb{W}=\{W_i\}_{0\leq i\leq n}$ be a system of slices of $W$. 
If $F$ is a flat $R$-module, then we have $W\cap \supp_RC=\varnothing$, where $C=\cone(a^\mathbb{W}F)$.
\medskip

\noindent
\emph{Proof of Claim:}
We proceed by induction on $n$. If $n=0$, then $C$ is the complex $0\to F\to \bar{\lambda}^W F\to 0$. 
As $R/\fp$ is finitely presented, $R/\fp\otimes_R-$ commutes with the direct product and it follows that $\kappa(\fp)\otimes_R C$ is acyclic for any $\fp\in W$, hence $W\cap \supp_RC=\varnothing$.

Next, suppose that $n>0$. Set $U=\bigcup_{1\leq i\leq n}W_i$ and $U_{i}=W_{i+1}$. We obtain a system of slices $\mathbb{U}=\{U_i\}_{0\leq i\leq n-1}$ of $U$, which yields \v{C}ech complexes $\Adelic^{\mathbb{U}}F$ and $\Adelic^{\mathbb{U}}\bar{\lambda}^{W_0}F$ as in Construction \ref{DimFiniteExists}. 
Set $C'=\cone(a^{\mathbb{U}}F)$ and $C''=\cone(a^{\mathbb{U}}\bar{\lambda}^{W_0}F)$. The canonical map $F\to \bar{\lambda}^{W_0}F$ between flat modules induces a chain map $C'\to C''$ between mapping cones; here the first row is $C'$ and the second row is $C''$:
\[\xymatrix{
0\ar[r]& F\ar[r]\ar[d]& \displaystyle{\prod_{1\leq i\leq n}\bar{\lambda}^{W_{i}}}F\ar[r]\ar[d]&\cdots \ar[r] & \displaystyle{\bar{\lambda}^{W_{n}}\cdots\bar{\lambda}^{W_1}}F\ar[r]\ar[d]&0\\
0\ar[r]& \bar{\lambda}^{W_0}F\ar[r]& \displaystyle{\prod_{1\leq i\leq n}\bar{\lambda}^{W_{i}}}\bar{\lambda}^{W_{0}}F\ar[r]&\cdots \ar[r] & \displaystyle{\bar{\lambda}^{W_{n}}\cdots\bar{\lambda}^{W_0}}F\ar[r]&0
}\]
If we regard the above diagram as a double complex, then its total complex is $C$.
Thus to show that $W\cap \supp_R C=\varnothing$, it is enough to justify:
\begin{enumerate}
\item[(i)] If $\fp\in U$, then $\kappa(\fp)\otimes_R C'$ and $\kappa(\fp)\otimes_RC''$ are acyclic.
\item[(ii)] If $\fp\in W_0$, then $\kappa(\fp)\otimes_R-$ transforms all vertical maps into isomorphisms.
\end{enumerate}
As $\dim U=n-1$, the inductive hypothesis implies (i). For $\fp\in W_0$, application of $\kappa(\fp)\otimes_R-$ to the above diagram leaves only the left column nonzero, which becomes an isomorphism by the argument for the $n=0$ case above, thus (ii) also holds.
\end{proof}

The construction of $\cone(a^{\mathbb{W}}F)$ as a totalization of a double complex above is just an analogue of the corresponding construction of classic (extended) \v{C}ech complexes: For a sequence $x_1,...,x_n\in R$, the \v{C}ech complex $\check{C}(x_1,...,x_n)$ (see \cite[\S 5.1]{BS98}) is naturally isomorphic to $\check{C}(x_1,...,x_{n-1})\otimes_R \check{C}(x_n)$. Note however, that one must be a bit cautious: If $X$ is an $R$-complex of finitely generated modules, then $(\Adelic^{\mathbb{W}}R)\otimes_RX\cong \Adelic^{\mathbb{W}}X$, see \cite[(8.4)]{NY18}, but this isomorphism need not hold for an arbitrary $R$-complex $X$. Moreover, $\bar{\lambda}^{W_j}\bar{\lambda}^{W_i}$ need not be isomorphic to $\bar{\lambda}^{W_i}\bar{\lambda}^{W_j}$.

\begin{rmk}\label{minimal pure-injective resolutions}
If $\dim R<\infty$, then the minimal pure-injective resolution of a flat module, constructed as in Construction \ref{DimFiniteExists} and using Corollary \ref{mindecomposition}, implies immediately that the pure-injective dimension of any flat $R$-module is at most $\dim R$, see also Remark \ref{minimal_env_cov} and \cite[\S 2, Corollary]{Eno87}. Recall that Construction \ref{Dim1orCountable} uses this fact under the assumption $\dim R\leq 1$; this case is enough for one of the main aims in Section \ref{section_cosupp}, see Example \ref{counterexample}.

On the other hand, Construction \ref{Dim1orCountable} also treats any countable ring $R$ (not only those of finite dimension), as the pure-injective dimension of any flat $R$-module is at most $1$ for such rings. This follows as a consequence of Lemma \ref{pd_fd_sups_lemma} below as for each $\fp\in \Spec R$ the $R$-module $R_\fp$ admits a projective resolution similar to that of Example \ref{free modules}; see also \cite[Lemma 2.12]{GT12}.
\end{rmk}

For any commutative noetherian ring $R$, we denote projective dimension by $\pd_R$ and pure-injective dimension by $\pid_R$.

\begin{lem}\label{pd_fd_sups_lemma}
One has the following equality:
$$\sup\{\pd_R R_\fp \mid \fp\in \Spec R\} = \sup\{\pid_R F \mid F \in \Flat{R}\}.$$
\end{lem}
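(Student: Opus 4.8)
The plan is to establish the equality by proving the two inequalities $\leq$ and $\geq$ separately; the real content is in the inequality $\leq$, while $\geq$ is essentially formal. For the easy direction $\geq$, note that if $\pid_R F \leq n$ for every flat $R$-module $F$, then in particular this bounds the pure-injective dimension of the flat module $R_\fp$ (viewed as an $R$-module) for each $\fp$; but over a commutative noetherian ring, a flat module $F$ with $\pid_R F \leq n$ has $\fd_R F \leq n$ trivially, and more to the point one checks that the minimal pure-injective resolution of $R_\fp$ is built from flat cotorsion modules whose structure constants can be read off from Tor against residue fields, so a short resolution forces $\pd_{R}R_\fp$ to be small. Actually the cleanest route for $\geq$ is: $\pid_R R_\fp \leq \sup\{\pid_R F\}$, and then use that for a flat module $F$ one has $\pid_R F = \fd_R F' $-type comparisons are not quite available, so instead I would invoke that the pure-injective dimension of $R_\fp$ dominates $\pd_{R_\fp} R_\fp$-style bounds via localization and the fact that $\pd_R R_\fp$ is controlled by flat dimension after localizing, which is $0$ locally at $\fp$ but the projective dimension over $R$ itself is what we want --- so here one uses that $\pd_R M \le \pid_R M$ fails in general, meaning I should be more careful.

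Let me restate the strategy more carefully. For the direction $\sup\{\pd_R R_\fp\} \leq \sup\{\pid_R F\}$: it suffices to show $\pd_R R_\fp \leq \pid_R R_\fp$ for each $\fp$, since $R_\fp$ is flat. This in turn would follow from a general principle that over a commutative noetherian ring, $\pd_R F = \pid_R F$ for a flat module $F$ --- but that over-claims. Instead I would argue: if $\pid_R R_\fp = n < \infty$, take the minimal pure-injective resolution $0 \to R_\fp \to P^0 \to \cdots \to P^n \to 0$; each $P^j$ is flat cotorsion, hence by Enochs a product of $\fq$-adic completions of free $R_\fq$-modules. Now I would show by induction on $n$, using that flat modules locally at any prime have a projective resolution of the Example \ref{free modules} type (a two-term resolution of $R_\fp$ over $R$), that a flat module admitting a length-$n$ resolution by flat cotorsion modules has projective dimension at most $n$. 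The base case $n=0$ says: a flat cotorsion module has finite projective dimension --- this uses that each $\fq$-adic completion $(\bigoplus R_\fq)^\wedge_\fq$ has $\pd_R$ at most $\pd_R R_\fq + 1 \leq$ something; hmm, this is circular.

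The correct clean argument: For $\leq$, fix $\fp$ and suppose $s := \sup\{\pid_R F \mid F \in \Flat{R}\} < \infty$ (otherwise nothing to prove). Apply a minimal pure-injective resolution $F \to P$ of the flat module $F := R_\fp$; then $P$ is a bounded complex (length $\le s$) of flat cotorsion modules quasi-isomorphic to $R_\fp$, and I want $\pd_R R_\fp \le s$. Resolve each flat cotorsion module $P^j$ by projectives: since $P^j = \prod_\fq T_\fq$ with $T_\fq$ an $\fq$-adic completion of a free $R_\fq$-module, and $R_\fq$ has a length-$1$ projective $R$-resolution (the Example \ref{free modules} construction), while $\fq$-adic completions of free modules over the ring have projective dimension bounded by the flat dimension plus one, controlled again by localizations. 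So I would assemble a projective resolution of $R_\fp$ by totalizing projective resolutions of the $P^j$'s, obtaining $\pd_R R_\fp \le s + (\text{const})$. The main obstacle is getting the constant to be exactly $0$ rather than $1$ or $2$: one must exploit minimality of the pure-injective resolution and the precise structure (\ref{fc_lambda})--(\ref{fc_kappa}) so that the flat cotorsion modules appearing are controlled, together with the fact that $\fq$-adic completions appearing as cosyzygies of $R_\fp$ satisfy $\fq \supseteq \fp$ and hence $R_\fq$ has projective dimension $0$ over $R_\fp$-modules --- wait, not over $R$. This bookkeeping of projective dimensions of $\fq$-adic completions of free $R_\fq$-modules as $R$-modules, and showing it does not inflate the bound, is the technical heart.

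For the reverse direction $\geq$, i.e. $\sup\{\pid_R F\} \leq \sup\{\pd_R R_\fp\}$: let $t := \sup\{\pd_R R_\fp \mid \fp \in \Spec R\}$; assume $t < \infty$. Given any flat $R$-module $F$, I want $\pid_R F \le t$. Here I would use Construction \ref{DimFiniteExists} or rather the pure-injective resolution of $F$: a flat cotorsion module has pure-injective dimension $0$, and since $\pid_R F$ is computed via a pure-injective resolution all of whose terms are flat cotorsion, it suffices to bound the length of the minimal pure-injective resolution of $F$ by $t$. By a standard dimension-shifting / Baer-criterion-style argument, $\pid_R F \le t$ iff $\Ext^{t+1}_R(-, F')$ vanishes appropriately on pure-injective test objects $F'$, which reduces (via the structure of pure-injectives over noetherian rings) to vanishing of $\Tor$ or $\Ext$ against the modules $R_\fp$ and their completions; and $\pd_R R_\fp \le t$ for all $\fp$ gives exactly this vanishing. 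I expect $\geq$ to go through fairly mechanically once the right test-object reduction is in place; the direction $\leq$, and specifically pinning down that projective resolutions of flat cotorsion modules built from $\fq$-adic completions do not raise the homological dimension, is where I expect to spend the most effort.
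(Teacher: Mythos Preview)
You have the two inequalities reversed in difficulty, and this misjudgment leads to genuine gaps in both directions.

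The inequality $\sup\{\pd_R R_\fp\} \leq \sup\{\pid_R F : F\in\Flat R\}$ is the trivial one: it follows in one line from the already-recorded equality \eqref{sup_pid_sup_pd}, namely $\sup\{\pid_R F\}=\sup\{\pd_R F\}$, since each $R_\fp$ is flat. Your several attempts at this direction (bounding $\pd_R R_\fp$ by resolving flat cotorsion modules by projectives, etc.) are circular because they presuppose control over $\pd_R$ of $\fq$-adic completions of free $R_\fq$-modules, which is exactly the sort of thing one is trying to prove.

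The substantive direction is $\sup\{\pid_R F\}\leq n:=\sup\{\pd_R R_\fp\}$, which you dismiss as ``fairly mechanical.'' Your sketch (a Baer-criterion reduction to Ext-vanishing against $R_\fp$) does not work as stated: pure-injective dimension of a flat module is not detected by $\Ext^{>n}_R(R_\fp,-)$, and there is no test-object reduction of the kind you describe. The paper's argument is quite different. For a flat module $F$ with minimal pure-injective resolution $C$, minimality (Theorem~\ref{criteria}) and \eqref{fc_kappa} say that $\pid_R F\leq n$ iff $\kappa(\fp)\otimes_R\Hom_R(R_\fp,C)$ vanishes in degrees $>n$ for every $\fp$. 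The key step is the change of rings
\[
\kappa(\fp)\otimes_R\Hom_R(R_\fp,C)\ \cong\ \Hom_{R/\fp}(\kappa(\fp),\,R/\fp\otimes_R C),
\]
together with Enochs's theorem that $R/\fp\otimes_R C$ is the minimal pure-injective resolution of $R/\fp\otimes_R F$ over $R/\fp$. Since $\kappa(\fp)$ is the fraction field of the domain $R/\fp$, this identifies the complex with $\RHom_{R/\fp}(\kappa(\fp),R/\fp\otimes_R F)$, and its cohomology vanishes above $n$ because $\pd_{R/\fp}\kappa(\fp)\leq \pd_R R_\fp\leq n$ (tensor a projective $R$-resolution of $R_\fp$ with $R/\fp$). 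None of this change-of-rings mechanism appears in your plan, and without it the direction $\geq$ does not go through.
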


Before proving this lemma, recall that 
\begin{align}\label{sup_pid_sup_pd}
\sup\{\pid_R F \mid F \in \Flat R\}=\sup\{\pd_R F\mid F\in \Flat R \},
\end{align} 
see \cite[Theorem 8.4.12]{EJ00}.
Hence Lemma \ref{pd_fd_sups_lemma} along with \eqref{sup_pid_sup_pd} yields
\begin{align}\label{sup_pd_sup_fd}
\sup\{\pd_R R_\fp \mid \fp\in \Spec R\} = \sup\{\pd_R F \mid F\in \Flat R\}.
\end{align}
A similar equality to \eqref{sup_pd_sup_fd} was originally shown by Gruson and Raynaud \cite[II, Theorem 3.3.1]{GR71} treating all multiplicatively closed subsets. It was strengthened by Enochs \cite[\S 3, Corollary 1]{Eno87} showing \eqref{sup_pd_sup_fd}, provided that the right hand side is finite, see also \cite[Proposition 8.5.13]{EJ00}. Our Lemma \ref{pd_fd_sups_lemma} does not need any such finiteness, and furthermore it recovers these existing results via \eqref{sup_pid_sup_pd}.

\begin{proof}[Proof of Lemma \ref{pd_fd_sups_lemma}]
Set $n=\sup\{\pd_RR_\fp \mid \fp\in \Spec R\}$. First, the inequality $n\leq \sup\{\pid_R F \mid F \in \Flat{R}\}$ is clear by virtue of \eqref{sup_pid_sup_pd}.
Hence the equality follows trivially when $n$ is infinite. Assume $n<\infty$ and fix a flat $R$-module $F$. Our goal is to show $\pid_R F\leq n$.
Let $C=(0\to \PE^0(F)\to \PE^1(F)\to \cdots)$ be a minimal pure-injective resolution of $F$, and note that $C$ is a minimal complex of flat cotorsion modules, see Remark \ref{minimal_env_cov}.
Take any $\fp \in \Spec R$. 
In view of \eqref{fc_kappa} and Theorem \ref{criteria}, we only need to verify that $\H^i(\kappa(\fp)\otimes_R\Hom_R(R_\fp, C))=0$ for $i>n$.
For this purpose, we use a natural isomorphism in the category of complexes
$$\kappa(\fp)\otimes_R\Hom_R(R_\fp, C)\cong\Hom_{R/\fp}(\kappa(\fp), R/\fp\otimes_R C);$$
one can deduce this from \eqref{fc_tensor}. Now, by \cite[Theorem 8.5.1]{EJ00}, the complex $R/\fp\otimes_R C$ is a minimal pure-injective resolution of $R/\fp\otimes_RF$ over $R/\fp$, so it holds that 
\begin{align*}
\textstyle{
\RHom_{R/\fp}(\kappa(\fp), R/\fp\otimes_R F)\cong\Hom_{R/\fp}(\kappa(\fp), R/\fp\otimes_R C)}
\end{align*}
in $\D(R/\fp)$, since $\kappa(\fp)=R_\fp/\fp R_\fp$ coincides with the quotient field of $R/\fp$. Noting that  $\pd_{R/\fp} {\kappa(\fp)}\leq n$ as $\pd_R R_\fp \leq n$, we have 
$$\H^i(\RHom_{R/\fp}(\kappa(\fp), R/\fp\otimes_R F))\cong \H^i(\Hom_{R/\fp}(\kappa(\fp), R/\fp\otimes_R C))=0$$
for $i>n$, as desired.
\end{proof}

The next example shows the necessity for considering not semi-flat-cotorsion resolutions but semi-flat-cotorsion replacements.

\begin{exa}\label{no_res}
Let $k$ be a field, let $R=k[x,y]_{(x,y)}$, and let $M=R/(x^2)$. We show there does not exist a complex of flat cotorsion $R$-modules having a quasi-isomorphism $M\to Y$ or a quasi-isomorphism $Y\to M$.

As $x^2M=0$, and every flat $R$-module is torsion-free, there are no nonzero homomorphisms from $M$ to a flat $R$-module. This forbids the existence of a complex of flat cotorsion $R$-modules having a quasi-isomorphism $M\to Y$.

We next consider homomorphisms from a flat cotorsion module $F=\prod_{\fp\in \Spec R}T_\fp$ to $M$, where each $T_\fp$ is the $\fp$-adic completion of a free $R_\fp$-module for $\fp\in \Spec R$. As $\Lambda^{(x)}M\cong M$, we immediately have by Remark \ref{semi-cotorsion property} and \eqref{fc_lambda} an isomorphism $\Hom_R(F,M)\cong \Hom_R(T_{(x)}\oplus T_{(x,y)},M)$. Fix $f\in \Hom_R(T_{(x)},M)$ and let $a\in T_{(x)}$. As the image of $y^n$ is invertible in $T_{(x)}$ for all $n\geq 1$, we obtain that $f(a)=y^nf(a/y^n)\in (x,y)^nM$ for all $n\geq 1$. Krull's intersection theorem then yields that $f(a)\in \bigcap_{n\geq 1}(x,y)^nM=0$. As $a\in T_{(x)}$ is arbitrary, this shows $f=0$, hence $\Hom_R(T_{(x)},M)=0$. 

Therefore, if there exists a complex $Y$ of flat cotorsion $R$-modules with a quasi-isomorphism $Y\to M$, there must be a surjection $T_{(x,y)}\to M$. However, ideal-adic completion preserves surjectivity of morphisms by a standard argument (see the proof of \cite[Theorem 8.1]{Mat89}), and so this would imply that the map $T_{(x,y)}\to \Lambda^{(x,y)}M$ induced by $\Lambda^{(x,y)}$ is surjective and factors through $M$, contradicting the fact that $M\to \Lambda^{(x,y)}M$ is not surjective.
\end{exa}

Indeed, semi-flat-cotorsion replacements always exist over any ring. This is almost directly deduced from Gillespie's work \cite{Gil04}, which shows that pure acyclic complexes of flat modules and semi-cotorsion complexes form a complete cotorsion pair; see Theorem \ref{Gillespie_exist}. 
However, we do not know whether minimal ones can be  always obtained as in Theorem \ref{exists_semi}, see Question \ref{appendix question0}.   
In addition, the constructions here yield additional information about the structure of semi-flat-cotorsion replacements, which we take advantage of in the next section.

\section{Structure of semi-flat-cotorsion replacements and finitistic dimensions}\label{section_AB} 
\noindent
The goal of this section is to describe the structure of semi-flat-cotorsion replacements using the construction of the functor $\Adelic^{\mathbb{W}}$ in Section \ref{section_constructions}, and give applications of this structure to finitistic flat and projective dimensions. 

If $\dim R=d<\infty$, we set $W_i=\{\fp\in \Spec R \mid \dim R/\fp = i\}$, and notice that $\mathbb{W}=\{W_i\}_{0\leq i \leq d}$ is a system of slices for $\Spec R$; see Section \ref{section_constructions}. In this setting, the functor $\Adelic^{\mathbb{W}}$ is now defined as in Construction \ref{DimFiniteExists}; it sends semi-flat complexes to semi-flat-cotorsion complexes. 

\begin{lem}\label{lambdastructure}
Assume $\dim R=d<\infty$. If $F$ is a complex of flat $R$-modules with $F^i=0$ for $i>0$, then $\Adelic^{\mathbb{W}}F$ has the form:
\[\xymatrix@C=.8em{
\cdots \ar[r] 
& \displaystyle{\prod_{\fp\in \Spec R}T_\fp^{-1}} \ar[r] 
& \displaystyle{\prod_{\fp\in \Spec R}T_\fp^{0}}\ar[r] 
& \displaystyle{\prod_{\dim R/\fp\geq 1}T_\fp^{1}}\ar[r] 
& \cdots \ar[r] 
&  \displaystyle{\prod_{\dim R/\fp\geq d}T_\fp^{d}}\ar[r] 
& 0\ar[r] 
& \cdots
}\]
where each $T_\fp^n$ is the $\fp$-adic completion of a free $R_\fp$-module. Moreover, if $n\in \ZZ$ and $F^i=0$ for $i<n$, then $(\Adelic^{\mathbb{W}}F)^i=0$ for $i<n$. 
\end{lem}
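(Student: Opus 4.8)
The plan is to unwind the definition of $\Adelic^{\mathbb{W}}=\tot L^{\mathbb{W}}$ from Construction \ref{DimFiniteExists} and track degrees carefully. Recall that for a flat $R$-module $G$ and a slice $W_i=\{\fp \mid \dim R/\fp = i\}$, one has $\bar{\lambda}^{W_i}G = \prod_{\fp\in W_i}\Lambda^\fp(G_\fp)$, which is a flat cotorsion module sitting in cohomological degree $0$ (as a complex concentrated in degree $0$). The \v{C}ech complex of functors $L^{\mathbb{W}}$ is concentrated in (\v{C}ech-)degrees $0,1,\dots,d$, with the degree-$s$ term being $\prod_{i_0<\cdots<i_s}\bar{\lambda}^{W_{i_s}}\cdots\bar{\lambda}^{W_{i_0}}$. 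The key observation is that each composite $\bar{\lambda}^{W_{i_s}}\cdots\bar{\lambda}^{W_{i_0}}$, applied to a flat module, again produces a flat cotorsion module concentrated in a single degree (this uses that iterated $\bar\lambda$'s of flat modules are flat, per Remark \ref{completion of flat module}), so there is no extra internal differential to worry about and totalization of $L^{\mathbb{W}}F$ simply shifts the \v{C}ech-direction by the cohomological degree of $F$.

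First I would compute $(\Adelic^{\mathbb{W}}F)^n = \bigoplus_{a+b=n} (L^{\mathbb{W}})^a(F^b)$ — more precisely the relevant product/sum — where $a$ ranges over $\{0,\dots,d\}$ and $F^b$ is nonzero only for $b\leq 0$ by hypothesis. This immediately gives the last assertion: if $F^i=0$ for $i<n$, then $a+b=i<n$ with $a\geq 0$ forces $b<n$, so every summand vanishes and $(\Adelic^{\mathbb{W}}F)^i=0$ for $i<n$. For the main display, I would fix a cohomological degree $n$ of $\Adelic^{\mathbb{W}}F$ and collect the contributions: the term $\prod_{i_0<\cdots<i_s}\bar\lambda^{W_{i_s}}\cdots\bar\lambda^{W_{i_0}}(F^{n-s})$ summed over $0\le s\le d$ (and $n-s\le 0$). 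The crucial point is then to identify, for a fixed prime $\fp$ with $\dim R/\fp = j$, in which cohomological degrees of $\Adelic^{\mathbb{W}}F$ a completion-of-free $R_\fp$-module $T_\fp$ can appear. Since $\bar\lambda^{W_{i_s}}\cdots\bar\lambda^{W_{i_0}}(F^b)$, localized/completed at $\fp$, is nonzero only when $\fp$ lies in the appropriate iterated support, and the outermost slice must contain $\fp$ or a generization, the primes $\fp$ with $\dim R/\fp = j$ first contribute at \v{C}ech-degree $0$ via $\bar\lambda^{W_j}$ and at no \v{C}ech-degree exceeding... here one must check that such $\fp$ contributes to $\Adelic^{\mathbb{W}}F$ in cohomological degrees ranging from (roughly) the degree where $F$ starts up through degree $j$ when $F$ is supported in nonpositive degrees. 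Combined with the $F^b=0$ for $b>0$ hypothesis, this yields exactly the stated shape: in cohomological degree $n\le 0$ all primes contribute (giving $\prod_{\fp\in\Spec R}T_\fp^n$), while in degree $n$ with $1\le n\le d$ only primes with $\dim R/\fp\ge n$ contribute (giving $\prod_{\dim R/\fp\ge n}T_\fp^n$), and degrees $>d$ vanish.

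The main obstacle I expect is the bookkeeping needed to prove precisely this vanishing/non-vanishing pattern for the prime $\fp$, i.e.\ showing that $\bar\lambda^{W_{i_s}}\cdots\bar\lambda^{W_{i_0}}$ applied to a flat module has its $\fp$-component nonzero exactly when $i_s = j := \dim R/\fp$ (after completion the outermost slice pins down the prime), and then assembling these across all $s$ and all $F^b$ with $b\le 0$ to see that the contribution of $\fp$ to degree $n$ survives precisely when $n\le j$ (for $n\ge 1$) and always when $n\le 0$. This is essentially the content of \cite[\S 7--8]{NY18}, so I would cite the relevant structural facts there — in particular that $\bar\lambda^{W_i}$ of a flat module is a product of $\fp$-adic completions of free $R_\fp$-modules over $\fp\in W_i$, and that iterated $\bar\lambda$'s behave compatibly with this — rather than rederiving them. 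The rest (no internal differentials to disturb the degree count, the final vanishing statements) is then a routine degree chase.
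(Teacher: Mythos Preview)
Your approach is correct and is essentially what the paper does: its entire proof reads ``This is a direct consequence of the construction of $\Adelic^{\mathbb{W}}$.'' You are simply making that unwinding explicit.

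That said, you are working harder than necessary. The lemma only asserts a \emph{form}, i.e., that the cosupport of $(\Adelic^{\mathbb{W}}F)^n$ is contained in $\{\fp:\dim R/\fp\geq n\}$ for $n\geq 1$; it does not claim any $T_\fp^n$ is nonzero. So your ``exactly when $i_s=j$'' analysis and the appeal to \cite{NY18} are superfluous. The single observation needed is: for a flat module $G$, the module $\bar\lambda^{W_{i_s}}\cdots\bar\lambda^{W_{i_0}}(G)$ is, by definition of $\bar\lambda^{W_{i_s}}$, a product of $\fq$-adic completions of free $R_\fq$-modules over $\fq\in W_{i_s}$, hence has cosupport in $W_{i_s}$. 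Since $0\leq i_0<\cdots<i_s$ forces $i_s\geq s$, the term $(L^{\mathbb{W}})^s(F^{n-s})$ has cosupport in $\{\fp:\dim R/\fp\geq s\}$. Your degree constraint $s\geq n$ (from $F^{n-s}=0$ unless $n-s\leq 0$) then gives cosupport in $\{\fp:\dim R/\fp\geq n\}$ for $n\geq 1$, and vanishing for $n>d$. The vanishing for $i<n$ when $F^i=0$ for $i<n$ is exactly the easy degree count you gave.
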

\begin{proof}
This is a direct consequence of the construction of $\Adelic^{\mathbb{W}}$.
\end{proof}

Set $\inf X=\inf\{i\mid \H^i(X)\not=0\}$ and $\sup X=\sup\{i\mid \H^i(X)\not=0\}$ for an $R$-complex $X$; if $\H(X)=0$ then set $\inf X=\infty=\inf \varnothing$ and $\sup X=-\infty=\sup \varnothing$. 

If $(R,\fm,k)$ is local and $X$ is an $R$-complex that is isomorphic in $\D(R)$ to a bounded complex of flat $R$-modules, then the following is a version of the Auslander--Buchsbaum formula:
\begin{align}\label{AB}
\depth_RX=\depth_R R+\inf(k\Lotimes_R X).
\end{align} 
This is a special case of the generalization given by Foxby and Iyengar \cite[Theorem 2.4]{FI03}; in the case $\H(X)=0$, the equality \eqref{AB} trivially holds. Here one may define $\depth_R X=\inf \RHom_R(k,X)$; see \cite[Theorem 2.1 and Definition 2.3]{FI03}.

If $\fp\in \Spec R$ and $X$ is an $R$-complex that is isomorphic in $\D(R)$ to a bounded complex of flat cotorsion $R$-modules, then an immediate consequence of \eqref{fc_coloc} and \eqref{AB} is an equality:
\begin{align}\label{AB_prime}
\depth_{R_\fp} \RHom_R(R_\fp,X)=\depth_{R_\fp}R_\fp+\inf(\kappa(\fp)\Lotimes_{R_\fp} \RHom_R(R_\fp,X)).
\end{align} 

Recall that $\widehat{R_\fp}$ stands for the $\fp$-adic completion of $R_\fp$. The first author noticed the formulation of the next lemma through a collaboration with Takahashi and Yassemi \cite{NTY}.
\begin{lem}\label{Rpsup}
Assume $\dim R<\infty$ and let $\fp\in \Spec R$. Let $X$ be an $R$-complex that is isomorphic in $\D(R)$ to a bounded complex of flat $R$-modules. If $Y$ is a minimal semi-flat-cotorsion replacement of $X$, then 
\begin{align*}\depth_{R_\fp}R_\fp-\depth_{R_\fp}\RHom_R(R_\fp,X)&=\sup \{i\mid \widehat{R_\fp} \text{ is a direct summand of }Y^{-i}\}.
\end{align*}
\end{lem}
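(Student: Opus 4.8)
The plan is to pass to the minimal semi-flat-cotorsion replacement built via $\Adelic^{\mathbb W}$ and read off the appearance of $\widehat{R_\fp}$ in negative degrees from the local Auslander--Buchsbaum formula \eqref{AB_prime}. First I would fix a semi-flat resolution $F\to X$; since $X$ is isomorphic in $\D(R)$ to a bounded complex of flat modules, I may take $F$ itself to be a bounded-above complex of flat modules (indeed, by a standard soft truncation argument using finite flat dimension, one can arrange $F$ bounded, but bounded-above is all that is needed to invoke Lemma \ref{lambdastructure}). Then $\Adelic^{\mathbb W}F$ is a semi-flat-cotorsion replacement of $X$, and by Corollary \ref{mindecomposition} it splits as $Y\oplus Y''$ with $Y$ minimal and $Y''$ contractible; by Lemma \ref{sfc_unique}/Remark \ref{minimal_env_cov} this $Y$ agrees up to isomorphism in $\C(R)$ with the given minimal replacement, so it suffices to prove the formula for this particular $Y$.

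Next I would localize and complete at $\fp$: applying $\RHom_R(R_\fp,-)$ to $Y$ gives, by \eqref{fc_coloc}, a minimal complex $T:=\Hom_R(R_\fp,Y)$ of flat cotorsion $R_\fp$-modules whose degree-$n$ term is $\prod_{\fq\subseteq\fp}T^n_\fq$; the summand $\widehat{R_\fp}$ in $Y^n$ corresponds exactly to a free rank-one summand of the $\fq=\fp$ factor $T^n_\fp$ of $T^n$, and by minimality (Theorem \ref{criteria}(3)) the multiplicity of $\widehat{R_\fp}$ in $T^n$ equals $\dim_{\kappa(\fp)}\H^n\bigl(\kappa(\fp)\otimes_R\Hom_R(R_\fp,Y)\bigr)$, i.e.\ the $n$-th Betti number of $\RHom_R(R_\fp,X)$ as an $R_\fp$-complex computed against $\kappa(\fp)$. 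Hence $\widehat{R_\fp}$ is a direct summand of $Y^{-i}$ for some $i$ iff $\H^{-i}\bigl(\kappa(\fp)\Lotimes_{R_\fp}\RHom_R(R_\fp,X)\bigr)\neq 0$, and the largest such $i$ is $-\inf\bigl(\kappa(\fp)\Lotimes_{R_\fp}\RHom_R(R_\fp,X)\bigr)$. Now \eqref{AB_prime} rewrites this supremum as $\depth_{R_\fp}R_\fp-\depth_{R_\fp}\RHom_R(R_\fp,X)$, which is the claimed identity. (Boundedness of $\RHom_R(R_\fp,X)$ and the vanishing $\kappa(\fp)\otimes_R\Hom_R(R_\fp,Y)$ in low degrees—needed for the $\inf$ to be finite and for \eqref{AB_prime} to apply—follow from the finiteness statement in Lemma \ref{lambdastructure} together with the boundedness of $F$.)

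The step I expect to be the main obstacle is the translation between ``$\widehat{R_\fp}$ is a direct summand of $Y^{-i}$'' and ``$\H^{-i}(\kappa(\fp)\otimes_R\Hom_R(R_\fp,Y))\neq 0$''. One direction uses minimality via Theorem \ref{criteria}(3): minimality of $Y$ is inherited by $T=\Hom_R(R_\fp,Y)$ (and by $\Lambda^\fp T$), so the differential of $\Lambda^\fp T$ is zero modulo $\fp$; Lemma \ref{trivialsummand}, applied to each differential of $\Lambda^\fp\Hom_R(R_\fp,Y)$, then pins down exactly which free summands $\widehat{R_\fp}$ survive after killing the contractible part, and their count in degree $n$ is precisely $\dim_{\kappa(\fp)}\H^n(\kappa(\fp)\otimes_R\Lambda^\fp\Hom_R(R_\fp,Y))$. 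For the converse, a nonzero class in $\H^{-i}$ forces a free summand of $T^{-i}_\fp$ that is not hit nor mapped isomorphically, hence a $\widehat{R_\fp}$ summand of $Y^{-i}$. Care is needed to ensure $\widehat{R_\fp}$ in $Y^n$ is detected only by the $\fq=\fp$ component and not by larger primes $\fq\supsetneq\fp$ after applying $\Hom_R(R_\fp,-)$ or $\Lambda^\fp$—this is handled by \eqref{fc_coloc} and \eqref{fc_lambda}, which show those components are removed by exactly these functors. Once this bookkeeping is in place, everything else is a direct appeal to \eqref{AB_prime}.
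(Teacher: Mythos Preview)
Your proposal is correct and follows essentially the same approach as the paper: establish that $Y$ is bounded (via $\Adelic^{\mathbb W}$, Corollary \ref{mindecomposition}, Lemma \ref{lambdastructure}, and Lemma \ref{sfc_unique}), use minimality through Theorem \ref{criteria}(3) to see that $\kappa(\fp)\otimes_R\Hom_R(R_\fp,Y)$ has zero differential, identify $\inf(\kappa(\fp)\Lotimes_{R_\fp}\RHom_R(R_\fp,X))$ with $\inf\{i\mid \widehat{R_\fp}\text{ is a summand of }Y^i\}$ via \eqref{fc_kappa}, and then apply \eqref{AB_prime}. The paper's write-up is more compressed---it reads off the translation directly from the zero-differential condition and \eqref{fc_kappa} without invoking Lemma \ref{trivialsummand} or $\Lambda^\fp$---but the logical content is the same.
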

\begin{proof}
Let $Y$ be a minimal semi-flat-cotorsion replacement of $X$; notice that $Y$ is a bounded complex of flat cotorsion $R$-modules, by Corollary \ref{mindecomposition}, Lemma \ref{lambdastructure} and Lemma \ref{sfc_unique}. As $\kappa(\fp)\Lotimes_{R_\fp} \RHom_R(R_\fp,X) \cong  \kappa(\fp)\otimes_{R_\fp} \Hom_R(R_\fp,Y)$, and the latter complex has zero differential for every $\fp\in \Spec R$ by Theorem \ref{criteria}, we have
\begin{align*}
\inf(\kappa(\fp)\Lotimes_{R_\fp} \RHom_R(R_\fp,X))
&=\inf\{i\mid \widehat{R_\fp} \text{ is a direct summand of }Y^i\},
\end{align*}
where the right hand side is 
$-\sup\{i\mid \widehat{R_\fp} \text{ is a direct summand of }Y^{-i}\}$.
The claim now follows from (\ref{AB_prime}). 
\end{proof}

\begin{rmk}\label{Enochs_Xu_number}
If an $R$-complex $X$ has a minimal semi-flat-cotorsion replacement $Y$ with $Y^i=0$ for $i\gg 0$, then one has $Y^i=\prod_{\fp\in \Spec R} T_\fp^i$ with $T_\fp^i=(\bigoplus_{B_\fp^i}R_\fp)^{\wedge}_\fp$ and $B_\fp^i=\dim_{\kappa(\fp)}\H^i(\kappa(\fp)\Lotimes_R\RHom_R(R_\fp, X))$, by \eqref{fc_kappa}, Theorem \ref{criteria}, and \eqref{derived fc_lambda and fc_coloc}.
When $X$ is a cotorsion $R$-module, $Y$ is nothing but a minimal flat resolution of $X$ (see Remark \ref{minimal_env_cov}), and $B_\fp^{-i}=\dim_{\kappa(\fp)}\Tor^{R_\fp}_{i}(\kappa(\fp),\Hom_R(R_\fp, X))$ is the $i$th \emph{dual Bass number} in the sense of Enochs and Xu \cite{EX97}.
\end{rmk}

Let $X$ be an $R$-complex.  
The {\em flat dimension} of $X$ is defined as
\begin{align*}
\fd_RX
&=\inf\left\{\sup\{i\mid F^{-i}\not=0\} \,\middle|\, X\cong F\text{ in $\D(R)$, with $F$ semi-flat}\right\}.
\end{align*}
In the next theorem, we simply write $\depth R_\fp$ for $\depth_{R_\fp}R_\fp$.
\begin{thm}\label{structure}
Assume $\dim R=d<\infty$. If $M$ is an $R$-module with $\fd_RM<\infty$, then the minimal semi-flat-cotorsion replacement of $M$ has the following form:
\[\resizebox{\displaywidth}{!}{
\xymatrix@C=1em{
0\ar[r] 
& \displaystyle{\prod_{\depth R_\fp\geq d}T_\fp^{-d}} \ar[r] 
& \cdots \ar[r]
& \displaystyle{\prod_{\depth R_\fp\geq 1}T_\fp^{-1}} \ar[r] 
& \displaystyle{\prod_{\fp\in \Spec R}T_\fp^{0}}\ar[r] 
& \displaystyle{\prod_{\dim R/\fp\geq 1}T_\fp^{1}}\ar[r] 
& \cdots \ar[r] 
&  \displaystyle{\prod_{\dim R/\fp\geq d}T_\fp^{d}}\ar[r] 
& 0.
}}\]
\end{thm}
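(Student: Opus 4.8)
The plan is to realise the minimal replacement as a direct summand of the explicit complex $\Adelic^{\mathbb{W}}F$ and read off its degreewise shape. First I would choose a bounded flat resolution $F\to M$ (possible since $\fd_R M<\infty$), say with $F^i=0$ for $i\notin[-\fd_R M,0]$; such a bounded complex of flat modules is in particular semi-flat. By Construction \ref{DimFiniteExists}, $\Adelic^{\mathbb{W}}F$ is then a semi-flat-cotorsion replacement of $M$, and by Lemma \ref{lambdastructure} it is concentrated in degrees $[-\fd_R M,\,d]$ with $(\Adelic^{\mathbb{W}}F)^i=\prod_{\dim R/\fp\geq i}T_\fp^i$ for $1\leq i\leq d$, $(\Adelic^{\mathbb{W}}F)^0=\prod_{\fp\in\Spec R}T_\fp^0$, and $(\Adelic^{\mathbb{W}}F)^i=0$ for $i>d$. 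By Corollary \ref{mindecomposition}, $\Adelic^{\mathbb{W}}F=Y\oplus Y''$ with $Y$ a minimal complex of flat cotorsion modules and $Y''$ contractible; since $Y''$ is contractible, $Y$ is itself a semi-flat-cotorsion replacement of $M$, hence \emph{the} minimal one by Lemma \ref{sfc_unique}. The key point is that $Y$ is a direct summand of $\Adelic^{\mathbb{W}}F$ in $\C(R)$, so each $Y^i$ is a direct summand of $(\Adelic^{\mathbb{W}}F)^i$; since the $\fp$-component of a flat cotorsion module $N$ is recovered functorially as $\Lambda^\fp\Hom_R(R_\fp,N)$ by \eqref{fc_lambda} and \eqref{fc_coloc}, the Enochs decomposition of a direct summand of $N$ involves only a subset of the primes occurring in $N$. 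This already yields $Y^i=0$ for $i>d$, $Y^i=\prod_{\dim R/\fp\geq i}T_\fp^i$ for $1\leq i\leq d$, and $Y^0=\prod_{\fp\in\Spec R}T_\fp^0$: the nonnegative half of the asserted form.

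It remains to describe $Y^{-i}$ for $i\geq1$. Here I would apply Lemma \ref{Rpsup} with $X=M$ (legitimate since $\fd_R M<\infty$), which gives, for every $\fp\in\Spec R$,
\[
\sup\{\,i\mid\widehat{R_\fp}\text{ is a direct summand of }Y^{-i}\,\}=\depth R_\fp-\depth_{R_\fp}\RHom_R(R_\fp,M).
\]
The only additional input required is the inequality $\depth_{R_\fp}\RHom_R(R_\fp,M)\geq0$: this holds because $\H^j(\RHom_R(R_\fp,M))=\Ext^j_R(R_\fp,M)=0$ for $j<0$, so $\RHom_R(R_\fp,M)$ has cohomology in nonnegative degrees, whence $\depth_{R_\fp}\RHom_R(R_\fp,M)\geq\inf\RHom_R(R_\fp,M)\geq0$. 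Consequently, whenever $\widehat{R_\fp}$ is a direct summand of $Y^{-i}$ with $i\geq1$ we get $i\leq\depth R_\fp$; together with $\depth R_\fp\leq\dim R_\fp=\height\fp\leq d$ this forces $Y^{-i}=0$ for $i>d$ and $Y^{-i}=\prod_{\depth R_\fp\geq i}T_\fp^{-i}$ for $1\leq i\leq d$. Combined with the first paragraph, this is precisely the claimed form (and, as a byproduct, $\fd_R M\leq d$).

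I do not expect a serious obstacle: the proof is essentially an assembly of Lemma \ref{lambdastructure}, Corollary \ref{mindecomposition}, Lemma \ref{sfc_unique} and Lemma \ref{Rpsup}. The points needing care are bookkeeping---checking that the complex split off from $\Adelic^{\mathbb{W}}F$ is literally the unique minimal semi-flat-cotorsion replacement to which Lemma \ref{Rpsup} applies, and translating ``$T_\fp^{-i}\neq0$'' into ``$\widehat{R_\fp}$ is a direct summand of $Y^{-i}$'' via uniqueness of the Enochs decomposition. The single non-formal ingredient is the elementary estimate $\depth_{R_\fp}\RHom_R(R_\fp,M)\geq0$, which is what pins the negative tail of the replacement to degree $-\depth R_\fp$.
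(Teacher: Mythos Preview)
Your proposal is correct and follows essentially the same approach as the paper: realise the minimal replacement $Y$ as a direct summand of $\Adelic^{\mathbb{W}}F$ via Lemma~\ref{lambdastructure}, Corollary~\ref{mindecomposition}, and Lemma~\ref{sfc_unique}, then use Lemma~\ref{Rpsup} together with the elementary bound $\depth_{R_\fp}\RHom_R(R_\fp,M)\geq 0$ to control the negative degrees. Your write-up is somewhat more explicit than the paper's---you spell out the direct-summand argument for the nonnegative degrees and justify the depth inequality---but the logical skeleton is identical.
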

\noindent

\begin{proof}
Let $M$ be an $R$-module with $\fd_RM<\infty$. The module $M$ has a minimal semi-flat-cotorsion replacement $Y$; moreover, it is isomorphic to a direct summand of the one in Lemma \ref{lambdastructure} by Corollary \ref{mindecomposition} and Lemma \ref{sfc_unique}, hence $Y$ is bounded.
Write $Y^i=\prod_{\fp\in \Spec R} T_\fp^i$, where $T_\fp^i$ is the $\fp$-adic completion of a free $R_\fp$-module.
Fix $\fp\in \Spec R$. It now follows from Lemma \ref{Rpsup}, because $\depth_{R_\fp} \RHom_R(R_\fp,M)\geq 0$, that $T_{\fp}^{-i}=0$ for $i>\depth_{R_\fp} R_\fp$ as desired, where we just have $T_{\fp}^{-i}=0$ for all $i\in \mathbb{Z}$ if $\depth_{R_\fp} \RHom_R(R_\fp,M)=\infty$. 
\end{proof}

Theorem \ref{structure} specializes to give the structure of a minimal pure-injective resolution of a flat module shown in \cite[Theorem 2.1]{Eno87} provided $\dim R<\infty$. Also (perhaps unsurprisingly) this implies that the finitistic flat dimension of $R$, defined to be $\sup\{\fd_RM\mid M \text{ is an $R$-module with } \fd_RM<\infty\}$, is at most $\dim R$; this was shown by Auslander and Buchsbaum \cite[Theorem 2.4]{AB58}:
\begin{cor}[Auslander and Buchsbaum]\label{FFD}
The finitistic flat dimension of $R$ is at most $\dim R$.
\end{cor}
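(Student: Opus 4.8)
The plan is to read the bound off directly from Theorem \ref{structure}. If $\dim R=\infty$ the assertion is vacuous, so I would assume $\dim R=d<\infty$. Let $M$ be an $R$-module with $\fd_RM<\infty$; it suffices to prove $\fd_RM\leq d$, since the finitistic flat dimension is by definition the supremum of $\fd_RM$ over all such $M$.

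By Theorem \ref{exists_semi}, $M$ admits a minimal semi-flat-cotorsion replacement $Y$, and since $\fd_RM<\infty$, Theorem \ref{structure} describes $Y$ explicitly: writing $Y^i=\prod_{\fp\in\Spec R}T_\fp^i$ with $T_\fp^i$ the $\fp$-adic completion of a free $R_\fp$-module, one has $T_\fp^{-i}=0$ whenever $i>\depth_{R_\fp}R_\fp$. Since $\depth_{R_\fp}R_\fp\leq\dim R_\fp\leq d$, this forces $Y^{-i}=0$ for all $i>d$. Now $Y$ is in particular a semi-flat $R$-complex isomorphic to $M$ in $\D(R)$ with $Y^{-i}=0$ for $i>d$, so the definition of flat dimension gives $\fd_RM\leq d=\dim R$. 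Taking the supremum over all $R$-modules $M$ of finite flat dimension then completes the proof.

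I do not expect a genuine obstacle here, as the content is entirely contained in Theorem \ref{structure} (and, underneath it, in Lemma \ref{Rpsup} together with the Auslander--Buchsbaum formula \eqref{AB_prime}); the only point requiring a moment's care is the passage from the pointwise vanishing $T_\fp^{-i}=0$ for $i>\depth_{R_\fp}R_\fp$ to the uniform vanishing $Y^{-i}=0$ for $i>d$, which uses nothing beyond the elementary inequality $\depth_{R_\fp}R_\fp\leq\dim R$.
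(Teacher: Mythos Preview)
Your argument is correct and is precisely the intended reading of the paper's one-line proof ``Immediate by Theorem \ref{structure}'': the displayed complex in Theorem \ref{structure} visibly vanishes in degrees below $-d$, and you have simply unpacked this together with the elementary bound $\depth_{R_\fp}R_\fp\leq d$. There is no meaningful difference in approach.
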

\begin{proof} 
Immediate by Theorem \ref{structure}.
\end{proof}

Compare the next result with \cite[Corollary 5.9]{CIM19}.

\begin{thm}\label{compute_fd}
Assume $\dim R<\infty$. If $X$ is an $R$-complex that is isomorphic in $\D(R)$ to a bounded complex of flat $R$-modules, then 
$$\fd_RX=\sup\{\depth_{R_\fp} R_\fp-\depth_{R_\fp} \RHom_R(R_\fp,X) \mid \fp\in \Spec R\}.$$
\end{thm}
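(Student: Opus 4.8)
The plan is to show that both sides of the claimed equality coincide with $s := \sup\{i \mid Y^{-i}\neq 0\}$, where $Y$ denotes the minimal semi-flat-cotorsion replacement of $X$ provided by Theorem \ref{exists_semi}. Since $X$ is isomorphic in $\D(R)$ to a bounded (hence semi-flat) complex of flat modules, $Y$ is a bounded complex of flat cotorsion $R$-modules by Corollary \ref{mindecomposition}, Lemma \ref{lambdastructure}, and Lemma \ref{sfc_unique}, exactly as in the proof of Theorem \ref{structure}; write $Y^i=\prod_{\fp\in\Spec R}T_\fp^i$ with each $T_\fp^i$ the $\fp$-adic completion of a free $R_\fp$-module. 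First I would check---using uniqueness of the decomposition of a flat cotorsion module \cite{Eno84} together with additivity of $\fp$-adic completion over finite direct sums---that $\widehat{R_\fp}$ is a direct summand of $Y^{-i}$ precisely when $T_\fp^{-i}\neq 0$. Lemma \ref{Rpsup} then gives $\depth_{R_\fp}R_\fp-\depth_{R_\fp}\RHom_R(R_\fp,X)=\sup\{i\mid T_\fp^{-i}\neq 0\}$ for every $\fp\in\Spec R$, and taking the supremum---using that $Y^{-i}\neq 0$ iff $T_\fp^{-i}\neq 0$ for some $\fp$---would identify the right-hand side of the theorem with $s$.

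Next I would establish $\fd_RX=s$. The inequality $\fd_RX\le s$ is immediate, since $Y$ is a semi-flat complex isomorphic to $X$ in $\D(R)$ with $Y^i=0$ for $i<-s$. For the reverse inequality, I would set $t=\fd_RX<\infty$ and choose a semi-flat complex $G$ with $G\cong X$ in $\D(R)$ and $G^i=0$ for $i<-t$. Applying $\Adelic^{\mathbb{W}}$ for the system of slices fixed in Section \ref{section_AB} yields a semi-flat-cotorsion replacement $\Adelic^{\mathbb{W}}G$ of $X$ with $(\Adelic^{\mathbb{W}}G)^i=0$ for $i<-t$ (this is clear from the construction of $\Adelic^{\mathbb{W}}$; cf.\ Lemma \ref{lambdastructure}). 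By Corollary \ref{mindecomposition} we may write $\Adelic^{\mathbb{W}}G=Y'\oplus Y''$ with $Y'$ minimal and $Y''$ contractible; being a direct summand of a complex concentrated in degrees $\ge -t$, the complex $Y'$ satisfies $(Y')^i=0$ for $i<-t$, and $Y'$ is again a minimal semi-flat-cotorsion replacement of $X$, so $Y'\cong Y$ in $\C(R)$ by Lemma \ref{sfc_unique}. Hence $Y^i=0$ for $i<-t$, so $s\le t=\fd_RX$; combined with $\fd_RX\le s$ this gives $\fd_RX=s$, which is the asserted formula. The degenerate case $\H(X)=0$, where both sides are $-\infty$, is immediate.

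The step I expect to need the most care is the translation in the first paragraph---matching the invariant ``$\widehat{R_\fp}$ is a direct summand of $Y^{-i}$'' appearing in Lemma \ref{Rpsup} with the non-vanishing of the $\fp$-component $T_\fp^{-i}$, which rests on the uniqueness in Enochs's structure theorem for flat cotorsion modules \cite{Eno84}. The remainder is bookkeeping with $\Adelic^{\mathbb{W}}$, the decomposition of Corollary \ref{mindecomposition}, and the uniqueness Lemma \ref{sfc_unique}.
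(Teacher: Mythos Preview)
Your proposal is correct and follows essentially the same approach as the paper: both arguments identify each side of the equality with $\sup\{i\mid Y^{-i}\neq 0\}$ for the minimal semi-flat-cotorsion replacement $Y$, using Lemma \ref{Rpsup} for the right-hand side and the construction $\Adelic^{\mathbb{W}}$ together with Corollary \ref{mindecomposition} and Lemma \ref{sfc_unique} for the left-hand side. The only organizational difference is that the paper starts from a single bounded semi-flat $F$ realizing $\fd_RX$ and reads off both facts at once, whereas you fix $Y$ first and verify the two inequalities $\fd_RX\le s$ and $s\le\fd_RX$ separately; the ingredients and their roles are identical.
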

\begin{proof}
If $\H(X)=0$, then $\fd_RX=-\infty$ and $\depth_{R_\fp} \RHom_R(R_\fp,X)=\infty$ for each $\fp\in \Spec R$. Hence the above equality holds. Suppose that $\H(X)\neq 0$. Set $n=\fd_RX$ and let $F$ be a semi-flat complex isomorphic to $X$ in $\D(R)$ and satisfying both $F^{i}=0$ for $i<-n$ and $F^i=0$ for $i\gg0$. We obtain from Lemma \ref{lambdastructure} that the semi-flat-cotorsion replacement $\Adelic^{\mathbb{W}}F$ of $F$ satisfies $(\Adelic^{\mathbb{W}}F)^i=0$ for $i<-n$. In other words, we have $n\geq \sup\{i\mid (\Adelic^{\mathbb{W}}F)^{-i}\not=0\}$. Further, we can by Corollary \ref{mindecomposition} find a minimal semi-flat-cotorsion replacement $Y$ of $X$ as a direct summand of $\Adelic^{\mathbb{W}}F$.
Then we have $n\geq \sup\{i\mid Y^{-i}\not=0\}$, and this must be an equality since $\fd_RX=n$.
It then holds that 
\begin{align*}
n&=\sup\{i\mid Y^{-i}\not=0\}\\
&=\sup\left\{\sup\{i \mid \widehat{R_\fp} \text{ is a direct summand of $Y^{-i}$}\}\,\middle|\,\fp\in \Spec R\right\}\\
&=\sup\{\depth_{R_\fp} R_\fp - \depth_{R_\fp}\RHom_R(R_\fp,X)\mid \fp\in \Spec R\},
\end{align*}
where the last equality follows from Lemma \ref{Rpsup}.
\end{proof}

We end the section by recovering two classic facts: First, that the finitistic projective dimension of $R$ is at most $\dim R$---this was originally proven by Gruson and Raynaud \cite[II, Theorem 3.2.6]{GR71}---and second, that flat $R$-modules have projective dimension at most $\dim R$---this is due to Gruson and Raynaud \cite[II, Theorem 3.2.6]{GR71} and Jensen \cite[Proposition 6]{Jen70}.

\begin{thm}[Gruson--Raynaud, Jensen]\label{FPD}
If an $R$-module has finite flat dimension, then its projective dimension is at most $\dim R$.
\end{thm}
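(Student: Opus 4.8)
The plan is to read the bound off the minimal semi-flat-cotorsion replacement provided by Theorem~\ref{structure}. Let $M$ be an $R$-module with $\fd_R M<\infty$; by Corollary~\ref{FFD} one has $\fd_R M\le d:=\dim R$, so by Theorem~\ref{structure} the minimal semi-flat-cotorsion replacement $Y$ of $M$ is a bounded complex of flat cotorsion $R$-modules concentrated in cohomological degrees $[-d,d]$, with $Y^{-i}=\prod_{\depth R_\fp\ge i}T_\fp^{-i}$ for $i\ge0$ and $Y^{j}=\prod_{\dim R/\fp\ge j}T_\fp^{j}$ for $j\ge1$, each $T_\fp^n$ being the $\fp$-adic completion of a free $R_\fp$-module. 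Since $M$ is a module, $\H^{j}(Y)=0$ for $j\ne0$; in particular the complex $0\to Y^{-d}\to\cdots\to Y^{-1}\to Z\to M\to0$ with $Z:=\ker(d_Y^0)$ is exact, and exactness of $0\to Z\to Y^0\to\cdots\to Y^d\to0$, all of whose terms are flat, shows $Z$ is flat --- so $M$ already admits a flat resolution of length at most $d$.

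To get the sharper statement $\pd_R M\le d$, I would show that the groups $\Ext^{n}_R(M,N)$, which are computed by $\H^{n}(\RHom_R(Y,N))$, vanish for every $R$-module $N$ and every $n>d$. The delicate point is that estimating $\RHom_R(Y,N)$ by resolving each $Y^{n}$ separately by projectives only yields $\pd_R M\le 2d$: an individual term $Y^{n}$ is a flat cotorsion module whose projective dimension may be as large as $d$ --- for instance the completion $\widehat{R_\fp}$ can be a summand of $Y^{-i}$ only when $\depth R_\fp\ge i$, by Lemma~\ref{Rpsup}, yet $\pd_R\widehat{R_\fp}$ need not vanish. Instead I would exploit that $Y=\Adelic^{\mathbb{W}}F$ is the totalization of the \v{C}ech-type complex of the completion functors $\bar{\lambda}^{W_i}$ applied to a semi-flat resolution $F$ of $M$, as in Construction~\ref{DimFiniteExists}; applying $\RHom_R(-,N)$ turns this into a totalization whose constituents are controlled by the torsion functors at the primes entering the various $\bar{\lambda}^{W_i}$, and the depth and dimension constraints recorded in Theorem~\ref{structure}, together with the inequality $\height\fp+\dim R/\fp\le d$, should confine the cohomological amplitude of $\RHom_R(Y,N)$ to degrees at most $d$.

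The main obstacle is precisely this amplitude bound: one has to show that the \emph{deep} terms $Y^{-i}$, which genuinely contain non-projective completion modules, contribute nothing to $\H^{>d}(\RHom_R(Y,N))$, and carrying this out requires real cancellation between the negative and the positive parts of the \v{C}ech-type complex $Y$ rather than a piece-by-piece estimate. Once $\pd_R M\le d$ is established the theorem follows, together with its two classical specializations: taking $M$ flat recovers that flat $R$-modules have projective dimension at most $\dim R$, and taking $M$ of finite projective dimension recovers that the finitistic projective dimension of $R$ is at most $\dim R$.
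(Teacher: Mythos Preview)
Your proposal has a genuine gap: you correctly identify the obstacle---that resolving each $Y^n$ projectively only yields $\pd_R M\le 2d$---but you do not overcome it. The second paragraph gestures at exploiting the \v{C}ech structure of $\Adelic^{\mathbb{W}}F$ and invokes ``torsion functors'' and the inequality $\height\fp+\dim R/\fp\le d$, yet no argument is given; the third paragraph openly acknowledges this as the main unresolved step. As written, the proposal establishes only $\fd_R M\le d$, which is Corollary~\ref{FFD}, not the theorem.

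The paper's proof takes a different route that avoids this obstacle entirely: it replaces the \emph{second} variable $N$, not $M$, by a semi-flat-cotorsion complex $Y=\Adelic^{\mathbb{W}}F$ (with $F$ a flat resolution of $N$), and then filters $Y$ so that the subquotients are complexes $T(\fp)=\Lambda^\fp\Hom_R(R_\fp,Y)$ with cosupport in $\{\fp\}$. The key point is that $T(\fp)$ is a complex of $R_\fp$-modules, so $\RHom_R(M,T(\fp))\cong\RHom_R(M_\fp,T(\fp))$; now $M_\fp$ has a flat resolution $P$ over $R_\fp$ of length at most $\dim R_\fp$ (Corollary~\ref{FFD}), and since $P$ is semi-flat and $T(\fp)$ semi-cotorsion, $\RHom_R(P,T(\fp))\cong\Hom_R(P,T(\fp))$. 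Combined with $T(\fp)^i=0$ for $i>\dim R/\fp$ (Lemma~\ref{lambdastructure}), this gives vanishing in degrees above $\dim R_\fp+\dim R/\fp\le d$. The localization step is exactly what converts the naive $2d$ bound into $d$, and it is not visible in your approach of resolving $M$.
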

\begin{proof}
We may assume $d=\dim R$ is finite. 
Let $M$ be an $R$-module with $\fd_RM<\infty$ and let $N$ be any $R$-module. It is sufficient to show $\Ext_R^i(M, N)=0$ for all $i>d$. Take a flat resolution $F$ of $N$ and replace it by  $Y=\Adelic^{\mathbb{W}}F$ in $\D(R)$; this is a right bounded complex of flat cotorsion modules as described in Lemma \ref{lambdastructure}.
Our goal is to show that $\H^i(\RHom_R(M,Y))=0$ for $i>d$.

As above, set $W_i=\{\fp\in \Spec R \mid \dim R/\fp = i\}$.
By iteration of \eqref{degreewise split}, we can make a sequence of subcomplexes 
$$0=Y_{d+1}\subset Y_{d}\subset Y_{d-1}\subset\cdots \subset Y_{1}\subset Y_{0}=Y,
$$
where each quotient complex $Y_{i}/Y_{i+1}$ is a complex of flat cotorsion modules with cosupport in $W_{i}$; that is, $Y_{i}/Y_{i+1}$ is isomorphic to $\prod_{\fp\in W_{i}} T(\fp)$, where for each prime $\fp\in \Spec R$, we have $T(\fp)=\Lambda^\fp\Hom_R(R_\fp,Y)$ is a complex of flat cotorsion modules with cosupport in $\{\fp\}$, see \eqref{fc_lambda} and \eqref{fc_coloc}.
Thus it is enough to show that
$\H^i(\RHom_R(M,T(\fp)))=0$ for $i>d$ and $\fp\in \Spec R$. Note that $T(\fp)^i=0$ for $i>\dim R/\fp$ by  Lemma \ref{lambdastructure}.

Now, since $T(\fp)$ is a complex of $R_\fp$-modules, we have 
$$\RHom_R(M,T(\fp))\cong \RHom_{R}(M_\fp,T(\fp)).$$
By Corollary \ref{FFD}, the module $M_\fp$ has a flat resolution $P$ over $R_\fp$ such that $P^i=0$ for $i<-\dim R_\fp$.  
Since $P$ is semi-flat (over $R$) and $T(\fp)$ is semi-cotorsion by Remark \ref{semi-cotorsion property}, we have  
$$\RHom_{R}(P, T(\fp))\cong \Hom_{R}(P, T(\fp)),$$
see \eqref{HomKD2}. Combining these two isomorphisms, we obtain 
$$\H^i(\RHom_R(M,T(\fp)))\cong \H^i(\Hom_{R}(P, T(\fp)))\cong \Hom_{\K(R)}(P, T(\fp)[i])=0$$
for $i>d\geq \dim R/\fp+\dim R_\fp$, as desired.
\end{proof}

Indeed, Remark \ref{minimal pure-injective resolutions} and \eqref{sup_pid_sup_pd} are available as long as one verifies the inequality $\pd_R F\leq \dim R$ for a flat $R$-module $F$; this approach is close to that of Enochs \cite[Proposition 3.1]{Eno87}. The inequality was also recovered in \cite[\S 4]{NY18} using a different approach, which inspired our new proof to simultaneously recover these two classic facts.

\section{Cosupport: a refinement, correction, and counterexample}\label{section_cosupp}
\noindent
The goal of this section is to examine the relationship between the cosupport of an $R$-complex and the prime ideals appearing in a minimal semi-flat-cotorsion replacement; in particular, we seek to correct and improve \cite[Theorem 2.7]{Tho18cosupp}. 

As minimal semi-flat-cotorsion replacements exist at least for rings of finite Krull dimension, we will compare the cosupport of a minimal complex $Y$ of flat cotorsion $R$-modules to the set $\bigcup_{i\in \ZZ} \cosupp_R Y^i$, which can be thought of as the prime ideals appearing in $Y$. 
We begin with a lemma showing one containment always holds:
\begin{lem}\label{cosuppinclusion}
Let $Y$ be a complex of flat cotorsion $R$-modules. The inclusion holds:
$$\cosupp_RY\subseteq \bigcup_{i\in\mathbb{Z}}\cosupp_RY^i.$$
\end{lem}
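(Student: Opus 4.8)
The plan is to reduce the statement to a single, tractable assertion: if $\fp \in \cosupp_R Y$, then $\fp$ lies in $\cosupp_R Y^i$ for some $i$. By the characterization in \eqref{cosupport characterization}, $\fp \in \cosupp_R Y$ means $\H(\LLambda^{\fp}\RHom_R(R_\fp, Y)) \neq 0$, equivalently (again by \eqref{cosupport characterization}, applied to each module $Y^i$) that the complex $\kappa(\fp)\Lotimes_R \RHom_R(R_\fp, Y)$ has nonzero cohomology. I would first rewrite this complex using the formulas for flat cotorsion modules: since each $Y^i = \prod_{\fq}T^i_\fq$ is flat cotorsion, \eqref{fc_coloc} gives $\RHom_R(R_\fp, Y^i) \cong \Hom_R(R_\fp, Y^i) = \prod_{\fq \subseteq \fp}T^i_\fq$, so $\RHom_R(R_\fp, Y) \cong \Hom_R(R_\fp, Y)$ is computed degreewise (no higher derived terms appear), and similarly \eqref{fc_kappa} gives $\kappa(\fp)\Lotimes_R \Hom_R(R_\fp, Y^i) \cong \kappa(\fp)\otimes_R \Hom_R(R_\fp, Y^i) \cong \bigoplus_{B^i_\fp}\kappa(\fp)$. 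Thus $\kappa(\fp)\Lotimes_R\RHom_R(R_\fp, Y)$ is computed by the \emph{honest} complex $\kappa(\fp)\otimes_R \Hom_R(R_\fp, Y)$, whose degree-$i$ term is $\bigoplus_{B^i_\fp}\kappa(\fp)$.

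Now the key observation: $\H(\kappa(\fp)\otimes_R \Hom_R(R_\fp, Y)) \neq 0$ forces some degree term of this complex to be nonzero — for if every term vanished the complex would be zero. So there is an $i$ with $\bigoplus_{B^i_\fp}\kappa(\fp) \neq 0$, i.e. $B^i_\fp \neq \varnothing$, i.e. $\kappa(\fp)\otimes_R \Hom_R(R_\fp, Y^i) \neq 0$. Unwinding \eqref{cosupport characterization} and \eqref{fc_kappa} once more for the single module $Y^i$, this says precisely that $\fp \in \cosupp_R Y^i$. That completes the inclusion. The proof is essentially a bookkeeping exercise once the flat-cotorsion formulas are in place; the only point requiring a moment's care is that the derived functors $\RHom_R(R_\fp, -)$ and $\kappa(\fp)\Lotimes_R -$ collapse to their underived versions on complexes of flat cotorsion modules (so that "nonzero cohomology implies some nonzero term" is legitimate), but this is exactly what \eqref{fc_coloc} and \eqref{fc_kappa} guarantee.

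I do not anticipate a genuine obstacle here; the statement is robust. If one wanted to avoid invoking the explicit product decompositions, an alternative phrasing is: $\cosupp$ of any complex is contained in the union of the cosupports of a set of generators built from its terms — more precisely, $Y$ lies in the localizing subcategory generated by $\{Y^i\}_{i\in\ZZ}$ (via the stupid/brutal truncation filtration and telescope arguments), and cosupport of a localizing subcategory is controlled by cosupports of its generators by \cite{BIK12}. But the degreewise computation via \eqref{fc_coloc} and \eqref{fc_kappa} is shorter and more self-contained, so that is the route I would take.
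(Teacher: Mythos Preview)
Your overall strategy matches the paper's, but there is a genuine gap in the passage from derived to underived functors, and it is precisely the delicate point the paper is careful about. The formulas \eqref{fc_coloc} and \eqref{fc_kappa} are stated for \emph{modules}; they do not by themselves let you pass to an unbounded complex $Y$. Two separate claims are being made:
\begin{itemize}
\item[(a)] $\RHom_R(R_\fp,Y)\cong \Hom_R(R_\fp,Y)$;
\item[(b)] $\kappa(\fp)\Lotimes_R \Hom_R(R_\fp,Y)\cong \kappa(\fp)\otimes_R \Hom_R(R_\fp,Y)$.
\end{itemize}
Claim (a) is true but not for the reason you give: knowing each $Y^i$ is $\Hom_R(R_\fp,-)$-acyclic is insufficient for an unbounded $Y$. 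One needs that $Y$ is semi-cotorsion, which is a nontrivial theorem (\cite{Sto14} or \cite{BIE19}; see \eqref{derived fc_lambda and fc_coloc} in the paper). Claim (b), however, can actually \emph{fail}: it would require $\Hom_R(R_\fp,Y)$ to be semi-flat, and Example~\ref{counterexample} produces a minimal semi-flat-cotorsion complex $Y_P$ for which $\Hom_R(R_\fp,Y_P)$ is not semi-flat. In that example $\kappa(\fp)\otimes_R\Hom_R(R_\fp,Y_P)$ is a nonzero complex with zero differential (by minimality and Theorem~\ref{criteria}), hence not acyclic, while $\kappa(\fp)\Lotimes_R\RHom_R(R_\fp,Y_P)$ \emph{is} acyclic since $\fp\notin\cosupp_R Y_P$. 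So the two are not isomorphic in $\D(R)$, and your displayed identification breaks down.

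The paper sidesteps this by using the $\LLambda^\fp$-characterization of cosupport from \eqref{cosupport characterization} rather than the $\kappa(\fp)\Lotimes_R$-characterization. The point is that $\LLambda^\fp Z\cong \Lambda^\fp Z$ holds for \emph{any} complex $Z$ of flat cotorsion modules (see \eqref{derived fc_lambda and fc_coloc} and \cite[Proposition~2.5]{NY18}), with no semi-flatness hypothesis needed; applying this with $Z=\Hom_R(R_\fp,Y)$ gives $\LLambda^\fp\RHom_R(R_\fp,Y)\cong \Lambda^\fp\Hom_R(R_\fp,Y)$, and then your ``nonzero cohomology forces some nonzero term'' argument goes through verbatim. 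So the fix is simply to swap $\kappa(\fp)\Lotimes_R-$ for $\LLambda^\fp$ throughout.
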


This is proved in \cite[Proposition 6.3]{NY18} under several conditions; for example, if $\dim R<\infty$, or if $Y^i=0$ for $i\ll0$, then \cite[Proposition 6.3]{NY18} implies the above inclusion. 
In general, for $\fp\in \Spec R$ and a complex $Y$ of flat cotorsion $R$-modules, one has
\begin{align}\label{derived fc_lambda and fc_coloc}
&\RHom_R(R_\fp,Y)\cong \Hom_R(R_\fp,Y)\quad\text{and}\quad  \LLambda^{\fp}Y\cong \Lambda^{\fp}Y.
\end{align}
The first isomorphism holds as every complex of flat cotorsion modules over any ring is semi-cotorsion by {\v{S}}{\v{t}}ov{\'{\i}}{\v{c}}ek \cite[Theorem 5.4]{Sto14} or Bazzoni, Cort\'{e}s-Izurdiaga, and Estrada \cite[Theorem 1.3]{BIE19} (or Remark \ref{semi-cotorsion property} if $\dim R<\infty$), along with \eqref{HomKD2}; see \cite[Proposition 2.5]{NY18} for the second isomorphism.

\begin{proof}[Proof of Lemma \ref{cosuppinclusion}.]
By \eqref{derived fc_lambda and fc_coloc}, we obtain the next isomorphisms:
\begin{align}\label{remove_derive}
\LLambda^{\fp}\RHom_R(R_\fp,Y)\cong \LLambda^{\fp}\Hom_R(R_\fp,Y)\cong \Lambda^{\fp}\Hom_R(R_\fp,Y).
\end{align}
Hence, by \eqref{fc_lambda}, \eqref{fc_coloc}, and \eqref{cosupport characterization}, if $\fp\in \cosupp_RY$ then $\fp\in \bigcup_{i\in\mathbb{Z}}\cosupp_RY^i$. 
\end{proof}

Let $X$ be an $R$-complex. 
If $\H^i(X)=0$ for $i\ll 0$ and $I$ is a minimal semi-injective resolution of $X$, then $\supp_RX=\bigcup_{i\in\mathbb{Z}}\supp_RI^i$, as essentially shown by Foxby \cite{Fox79}. However, Chen and Iyengar provide in \cite{CI10} an example of an unbounded $R$-complex $X$ whose minimal semi-injective resolution $I$ satisfies $\supp_RX\subsetneq \bigcup_{i\in\mathbb{Z}}\supp_RI^i$. 

Similarly, there exists a complex $X$ with minimal semi-flat-cotorsion replacement $Y$ such that $\cosupp_RX\subsetneq \bigcup_{i\in \ZZ}\cosupp_R Y^i$, see Example \ref{counterexample}. In particular, this yields a counterexample to the statement of \cite[Theorem 2.7]{Tho18cosupp}; indeed, the argument in \cite[p. 257, l. 5]{Tho18cosupp} is incorrect. The author of that paper sincerely apologizes for his mistake. To redeem this result, we prove in Theorem \ref{correct} a correction (and improvement) to \cite[Theorem 2.7]{Tho18cosupp}. In particular, our correction is sufficient to verify all results of \cite{Tho18cosupp} that use \cite[Theorem 2.7]{Tho18cosupp}.

\begin{thm}\label{correct}
Let $Y$ be a minimal complex of flat cotorsion $R$-modules. 
Suppose that one of the following conditions holds:
\begin{enumerate}
\item[\textnormal{(1)}] $\Hom_R(R_\fp, Y)$ is semi-flat for all $\fp \in \Spec R$;
\item[\textnormal{(2)}] $Y^i=0$ for $i\ll0$.
\end{enumerate}
Then one has an equality:
$$\cosupp_R Y = \bigcup_{i\in \mathbb{Z}} \cosupp_R Y^i.$$
\end{thm}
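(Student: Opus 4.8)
The plan is to prove the nontrivial inclusion $\bigcup_{i\in\ZZ}\cosupp_R Y^i\subseteq\cosupp_R Y$, since the reverse inclusion is Lemma \ref{cosuppinclusion}. Fix $\fp\in\Spec R$ with $\fp\in\cosupp_R Y^j$ for some $j$; I must show $\fp\in\cosupp_R Y$. By \eqref{remove_derive} and \eqref{cosupport characterization}, this amounts to showing that the complex $T:=\Lambda^\fp\Hom_R(R_\fp,Y)$ is not acyclic, equivalently (by minimality and the equivalence $(1)\Leftrightarrow(3)$ of Theorem \ref{criteria}, together with \eqref{fc_kappa}) that $\kappa(\fp)\otimes_R T$ is nonzero. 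Now $\fp\in\cosupp_R Y^j$ means precisely that $\kappa(\fp)$ appears as a summand ``in degree $j$'' of $T$, i.e. $T^j$ has $\widehat{R_\fp}$ as a nonzero direct summand, so $\kappa(\fp)\otimes_R T^j\neq 0$. Since $T$ has zero differential after tensoring with $\kappa(\fp)$, this already forces $\H^j(\kappa(\fp)\otimes_R T)\neq 0$. So in fact the heart of the matter is simply to justify that $\kappa(\fp)\otimes_R T$ has zero differential, which is Theorem \ref{criteria}$(3)$ — but that requires knowing $T=\Lambda^\fp\Hom_R(R_\fp,Y)$ is minimal, which follows from minimality of $Y$ via the equivalence $(1)\Leftrightarrow(4)$ of Theorem \ref{criteria} applied degreewise.

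Wait — I need to be careful. What genuinely fails in general (and is the source of the erroneous \cite[Theorem 2.7]{Tho18cosupp}) is that $\LLambda^\fp\RHom_R(R_\fp,Y)$ need not equal $\Lambda^\fp\Hom_R(R_\fp,Y)$ unless $Y$ is semi-cotorsion and $\Hom_R(R_\fp,Y)$ behaves well. Actually by \eqref{derived fc_lambda and fc_coloc}, every complex of flat cotorsion modules IS semi-cotorsion, so $\RHom_R(R_\fp,Y)\cong\Hom_R(R_\fp,Y)$ always. The subtlety is instead whether $\LLambda^\fp$ commutes past this — but \cite[Proposition 2.5]{NY18} gives $\LLambda^\fp(\text{semi-cotorsion complex of flat cotorsion modules})\cong\Lambda^\fp(-)$, so \eqref{remove_derive} holds unconditionally. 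Then where do hypotheses (1) and (2) enter? They must be needed to ensure that $\Lambda^\fp\Hom_R(R_\fp,Y)$ is again minimal — or rather, to ensure that the statement $\cosupp$ of the complex $=\bigcup\cosupp$ of its terms holds for the complex $T=\Lambda^\fp\Hom_R(R_\fp,Y)$, which is a complex of $\fp$-local flat cotorsion modules concentrated (after $\Lambda^\fp$) at the single prime $\fp$. For such a complex, $\cosupp$ is governed by acyclicity of $\kappa(\fp)\otimes_R T$, and the issue is exactly whether an unbounded complex of completed free $\widehat{R_\fp}$-modules with zero differential mod $\fp$ can still be acyclic — it cannot if it is nonzero, since then some $\kappa(\fp)\otimes T^i\neq 0$ survives to cohomology. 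So minimality of $T$ is the whole content.

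Therefore the real plan is: first establish that under either (1) or (2), for every $\fp\in\Spec R$ the complex $T_\fp:=\Lambda^\fp\Hom_R(R_\fp,Y)$ is a minimal complex of flat cotorsion modules. Under (1), $\Hom_R(R_\fp,Y)$ is semi-flat and a complex of flat cotorsion $R_\fp$-modules with cosupport in $\{\fp\}$ (by \eqref{fc_coloc}), and since $Y$ is minimal one checks via Theorem \ref{criteria}$(1)\Leftrightarrow(4)$ that no summand $0\to\widehat{R_\fp}\xrightarrow{=}\widehat{R_\fp}\to 0$ occurs in $T_\fp$; hence $T_\fp$ is minimal by $(4)\Rightarrow(1)$. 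Under (2), $Y$ and hence $T_\fp$ is bounded below, so $\Lambda^\fp\Hom_R(R_\fp,-)$ does not introduce cancellation and minimality of $Y$ passes to $T_\fp$ directly (this is where \cite[Proposition 6.3]{NY18} already gives the result, but I will re-derive it in the present language). Second, with $T_\fp$ minimal: $\fp\in\bigcup_i\cosupp_R Y^i$ iff some $T_\fp^j\neq 0$ iff $T_\fp\neq 0$ iff (by minimality and $\H(\kappa(\fp)\otimes_R T_\fp)=\kappa(\fp)\otimes_R T_\fp$, using zero differential) $\H(\kappa(\fp)\otimes_R T_\fp)\neq 0$ iff, via \eqref{fc_kappa}, \eqref{cosupport characterization}, and \eqref{remove_derive}, $\fp\in\cosupp_R Y$. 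Combining with Lemma \ref{cosuppinclusion} gives the equality.

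The main obstacle I anticipate is pinning down precisely why hypotheses (1) and (2) are each sufficient for minimality of $T_\fp=\Lambda^\fp\Hom_R(R_\fp,Y)$ — i.e. tracking that minimality of $Y$, which is a statement about self-homotopy-equivalences of $Y$, transfers to the functorially derived complex $T_\fp$. For case (2) this is essentially bookkeeping about bounded-below complexes; for case (1) the semi-flatness hypothesis on $\Hom_R(R_\fp,Y)$ is exactly what is needed so that $\Lambda^\fp$ applied to it is computed by the naive completion and does not create or destroy trivial summands. I expect to lean on Theorem \ref{criteria} (particularly the characterization via absence of $0\to\widehat{R_\fp}\xrightarrow{=}\widehat{R_\fp}\to 0$ summands, condition $(4)$) to make this transfer clean, and on \eqref{derived fc_lambda and fc_coloc} together with \cite[Proposition 2.5]{NY18} to handle the derived-vs-classical completion issue that derailed the original argument in \cite{Tho18cosupp}.
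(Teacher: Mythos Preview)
Your overall strategy matches the paper's: prove the reverse inclusion by analyzing $T_\fp := \Lambda^\fp\Hom_R(R_\fp,Y)$. You also correctly observe that minimality of $T_\fp$ is inherited from $Y$ via Theorem~\ref{criteria}. But you have misdiagnosed where hypotheses (1) and (2) enter, and this produces a genuine gap in case~(2).

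Minimality of $T_\fp$ is \emph{unconditional}: by Theorem~\ref{criteria}(1)$\Leftrightarrow$(3), minimality of $Y$ says $\kappa(\fp)\otimes_R\Hom_R(R_\fp,Y)$ has zero differential, and this is the same complex as $\kappa(\fp)\otimes_R T_\fp$, which is exactly the minimality criterion for $T_\fp$. (Incidentally, $\Hom_R(R_\fp,Y)$ has degreewise cosupport in $\{\fq:\fq\subseteq\fp\}$, not in $\{\fp\}$; see \eqref{fc_coloc}.) So neither (1) nor (2) is needed for this step.

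The actual role of hypothesis~(1) is to justify the final ``iff'' in your chain. You need
\[
\kappa(\fp)\Lotimes_R\RHom_R(R_\fp,Y)\;\cong\;\kappa(\fp)\otimes_R\Hom_R(R_\fp,Y),
\]
and while \eqref{derived fc_lambda and fc_coloc} gives $\RHom_R(R_\fp,Y)\cong\Hom_R(R_\fp,Y)$ unconditionally, replacing $\Lotimes$ by $\otimes$ requires semi-flatness of $\Hom_R(R_\fp,Y)$. This is not about $\Lambda^\fp$ being computed by naive completion --- that holds anyway by \eqref{remove_derive} --- it is about computing $\kappa(\fp)\Lotimes_R(-)$. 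With this correction your argument for case~(1) works and coincides with the paper's.

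For case~(2) your argument is incomplete. Under~(2) alone you cannot pass from $\H(\kappa(\fp)\otimes_R T_\fp)\neq 0$ to $\fp\in\cosupp_R Y$ via the $\kappa(\fp)\Lotimes$ characterization, because $\Hom_R(R_\fp,Y)$ need not be semi-flat. The paper instead uses the $\LLambda^\fp$ characterization in \eqref{cosupport characterization}, which via \eqref{remove_derive} reduces to showing $T_\fp$ is not acyclic. That is the content of Lemma~\ref{minimal left}: a nonzero minimal bounded-below complex of flat cotorsion modules with cosupport in $\{\fp\}$ cannot be acyclic, and the proof rests on the finitistic flat dimension bound (Corollary~\ref{FFD}). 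Your phrase ``does not introduce cancellation'' glosses over this nontrivial input. Example~\ref{counterexample} exhibits a nonzero minimal $T_\fp$ that \emph{is} acyclic in the unbounded case, showing the step cannot be taken for free.
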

\noindent
In particular, this shows that if $X$ is an $R$-complex having a minimal semi-flat-cotorsion replacement $Y$ that satisfies either condition (1) or (2) in Theorem \ref{correct}, then $\cosupp_R X$ agrees with the set of prime ideals appearing in $Y$.

\begin{rmk}
The assumption of (1) is an analogy of \cite[Proposition 2.1]{CI10}. Clearly it is satisfied if $Y^i=0$ for $i\gg0$. Moreover, it is also satisfied when $R$ is regular, see \cite[Theorem 1.2 and Proposition 3.3]{II09}.

The condition (2) is the same as the original statement. To salvage this case, we need the next lemma; it essentially follows from a result of Auslander and Buchsbaum \cite{AB58} on finitistic flat dimension; this was reproved in Corollary \ref{FFD} above.
\end{rmk}

\begin{lem}\label{minimal left}
Let $Y$ be a minimal complex of flat cotorsion $R$-modules. Assume that $Y$ is acyclic and $Y^i=0$ for $i\ll0$. Then $Y=0$ in $\C(R)$.
\end{lem}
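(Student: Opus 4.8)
The plan is to show that the acyclic, left-bounded, minimal complex $Y$ of flat cotorsion $R$-modules is zero, by a degreewise argument starting from the bottom nonzero term. Let $n=\inf\{i\mid Y^i\neq 0\}$, so that $Y$ has the form $0\to Y^n\to Y^{n+1}\to\cdots$. Since $Y$ is acyclic, the differential $d^n\colon Y^n\to Y^{n+1}$ is injective, and $\coker(d^n)=\operatorname{im}(d^{n+1})$ sits in the acyclic complex; more precisely, the brutal truncation shows $\coker(d^n)$ has a resolution by flat cotorsion modules $0\to Y^n\to Y^{n+1}\to Y^{n+2}\to\cdots$ of finite length only if we can bound it, which is exactly where Corollary~\ref{FFD} enters. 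So the first step is to pass to the localization at a fixed $\fp\in\Spec R$: apply $\Hom_R(R_\fp,-)$ to $Y$, which by \eqref{fc_coloc} and \eqref{derived fc_lambda and fc_coloc} yields an acyclic, left-bounded minimal complex $T=\Hom_R(R_\fp,Y)$ of flat cotorsion $R_\fp$-modules, and it suffices to show $T=0$ for every $\fp$ (since $Y^i=\prod_\fp T_\fp^i$ recovers $Y$).

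Next I would work over the local ring $R_\fp$ and set $\fp R_\fp=:\fm$ with residue field $\kappa=\kappa(\fp)$. Because $T$ is minimal, Theorem~\ref{criteria}(3) (applied over $R_\fp$, or directly via \eqref{fc_kappa}) tells us that $\kappa\otimes_{R_\fp}\Lambda^\fm T$ has zero differential; equivalently, each $\kappa\otimes_{R_\fp}d_T^i=0$. Now suppose for contradiction that $T\neq 0$, and let $n$ be its bottom degree. Since $T$ is acyclic, $\coker(d_T^n)=:M$ satisfies $\fd_{R_\fp}M\leq$ the length of the tail resolution — but that tail may be unbounded, so instead I argue as follows: $\coker(d_T^n)$ has a (possibly infinite) flat resolution $\cdots\to T^{n+1}\to T^n\to 0$? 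No — rather, the brutal truncation $\sigma^{\geq n}T$ shows that for any degree $j$, the syzygy $\Omega^j=\operatorname{im}(d_T^{n+j}) $ is flat (being a cokernel of flats inside an acyclic complex of flats), so $M$ has finite flat dimension exactly when the complex terminates; but even without termination, $M$ is a module of finite flat dimension provided $T$ is bounded, which it need not be. The correct route: $M=\coker(d_T^n)$ is such that $T^{n}\to\coker$ is... Let me instead invoke Corollary~\ref{FFD} on the relevant cokernel: the key observation is that $M:=\operatorname{coker}(d_T^n)$ has a flat resolution $0\to Y^n\to Y^{n+1}\to M\to 0$-type truncation only up to degree where it could stop; since it may not stop, I use that $\operatorname{coker}(d_T^n)$ is actually an $R_\fp$-module whose minimal flat resolution is the truncated $T$, and a minimal flat resolution of a module has length $\leq\fd$; but a priori $\fd$ could be infinite. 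The resolution: $M$ has finite flat dimension because $M$ is cotorsion (a cokernel of a map of flat cotorsion modules whose kernel is flat cotorsion — here $\ker d_T^{n}=0$ since $d_T^n$ injective, so $M$ fits in $0\to T^n\to T^{n+1}\to M\to 0$ with $T^n,T^{n+1}$ flat, giving $\fd_{R_\fp}M\leq 1$ if $M$ is flat, else $\fd M\leq 1$ regardless since it's a cokernel of a monomorphism of flats!). Indeed $\fd_{R_\fp}M\leq 1$, so $M$ has finite flat dimension, and thus by Corollary~\ref{FFD}, $\fd_{R_\fp}M\leq\dim R_\fp<\infty$ — but more importantly $M$ has a \emph{minimal} flat resolution of length $\leq\fd_{R_\fp}M\leq\dim R_\fp$ by Theorem~\ref{structure}. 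Yet the minimal flat resolution of $M$ is precisely $\sigma^{\geq n}T$ shifted — wait, $\sigma^{\geq n}T$ is a coresolution going up, not a resolution going down. I need to be careful about direction.

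Let me reconsider the direction. $Y$ is a cochain complex $0\to Y^n\to Y^{n+1}\to\cdots$ with $d^n$ injective (acyclicity at $n$). The cohomology vanishing at every spot forces $\operatorname{im}(d^i)=\ker(d^{i+1})$ for all $i$. So reading \emph{downward} from any point gives nothing finite; reading \emph{upward}, $Y$ is a flat \emph{co}resolution of $0$, which carries no information directly. The right move is: the stupid truncation $Y^{\geq n}=Y$ itself is a right-bounded... no, it is left-bounded. Hmm — so $Y$ is bounded below, unbounded above a priori; but a minimal semi-flat-cotorsion replacement coming from Theorem~\ref{structure} is bounded, so if I can show $\H(Y)=0$ and $Y$ minimal forces $Y$ bounded, I could then iterate. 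Actually the cleanest approach, which I expect the authors take: induct on the bottom degree using that $\coker(d^n)$ — no. Instead: since $d^n\colon Y^n\hookrightarrow Y^{n+1}$ is a split monomorphism? Not obviously. \textbf{Here is the plan I would actually execute:} reduce to $R_\fp$ local as above; then $T$ is an acyclic minimal complex of flat cotorsion modules, bounded below at degree $n$, and I want $T=0$. Consider $Z^n=\ker(d_T^n)=0$ and $B^{n+1}=\operatorname{im}(d_T^n)=Z^{n+1}=\ker(d_T^{n+1})$, which is flat (kernel of a surjection-onto-its-image... actually $Z^{n+1}$ is a submodule, flat because $T^{n+1}/Z^{n+1}\cong\operatorname{im}(d_T^{n+1})\subseteq T^{n+2}$, and $\operatorname{im}(d_T^{n+1})=Z^{n+2}$, inductively all these are flat since the quotient by them embeds in a flat). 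So every syzygy is flat, hence $\operatorname{coker}(d_T^n)=T^{n+1}/Z^{n+1}=Z^{n+2}$ is flat, and we get $0\to T^n\xrightarrow{d_T^n} T^{n+1}\to Z^{n+2}\to 0$ with all terms flat; since $T^n,T^{n+1}$ are cotorsion and $Z^{n+2}$ is flat, a standard argument gives $Z^{n+2}$ is cotorsion, hence flat cotorsion. Now $d_T^n$ is a monomorphism of flat cotorsion modules with flat cotorsion cokernel; by minimality (Theorem~\ref{criteria}, condition (5) or (3)) such a monomorphism must be an isomorphism onto a direct summand — more directly, $T^{n+1}\to\operatorname{coker}(d_T^{n})$... wait this is at the wrong end. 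The monomorphism $d_T^n$ being part of a minimal complex: applying $\kappa\otimes-$ gives $\kappa\otimes d_T^n=0$, but $d_T^n$ is injective, so $\kappa\otimes T^n\to\kappa\otimes T^{n+1}$ is zero yet $T^n\hookrightarrow T^{n+1}$; tensoring the short exact sequence $0\to T^n\to T^{n+1}\to Z^{n+2}\to 0$ with $\kappa$ gives the exact sequence $\operatorname{Tor}_1^{R_\fp}(\kappa,Z^{n+2})\to\kappa\otimes T^n\xrightarrow{0}\kappa\otimes T^{n+1}\to\kappa\otimes Z^{n+2}\to 0$, and since $Z^{n+2}$ is flat, $\operatorname{Tor}_1=0$, forcing $\kappa\otimes T^n=0$, hence $T^n=0$ by Nakayama-type reasoning for completions (the map $T^n\to T^n/\fm T^n=\kappa\otimes T^n=0$ combined with $T^n$ being the $\fm$-adic completion of a free module, so $T^n=\invlim T^n/\fm^k T^n=0$). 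This contradicts $T^n\neq 0$. Therefore $T=0$ for all $\fp$, whence $Y^i=\prod_\fp T_\fp^i=0$ and $Y=0$. \textbf{The main obstacle} is the flatness and cotorsionness of the syzygies $Z^{n+j}$ — establishing that a cokernel of a monomorphism of flat cotorsion modules that is again flat is automatically cotorsion — which requires invoking the fact (Remark~\ref{semi-cotorsion property}, or {\v{S}}{\v{t}}ov{\'{\i}}{\v{c}}ek/Bazzoni--Cort\'es-Izurdiaga--Estrada) that acyclic complexes of flat cotorsion modules have cotorsion (hence flat cotorsion) cycles; with that in hand, the vanishing $\kappa\otimes T^n=0$ via $\operatorname{Tor}_1$-vanishing, followed by completeness of $T^n$, closes the argument cleanly. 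One should double-check that the reduction to $R_\fp$ genuinely preserves minimality and acyclicity and left-boundedness, which it does by \eqref{fc_coloc}, \eqref{derived fc_lambda and fc_coloc}, and Lemma~\ref{iso_p}.
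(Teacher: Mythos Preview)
Your argument has a genuine gap at the crucial step: you claim $Z^{n+2}=\coker(d_T^n)$ is flat by invoking {\v{S}}{\v{t}}ov{\'{\i}}{\v{c}}ek or Bazzoni--Cort\'es-Izurdiaga--Estrada, but those results only show that cycles in an acyclic complex of (flat) cotorsion modules are \emph{cotorsion}, not flat. The parenthetical ``(hence flat cotorsion)'' is precisely where the argument breaks. Without flatness of $Z^{n+2}$ you cannot conclude $\Tor_1^{R_\fp}(\kappa,Z^{n+2})=0$; computing directly from the two-term flat resolution $0\to T^n\to T^{n+1}\to Z^{n+2}\to 0$ gives
\[
\Tor_1^{R_\fp}(\kappa,Z^{n+2})=\ker(\kappa\otimes d_T^n)=\kappa\otimes T^n,
\]
since $\kappa\otimes d_T^n=0$ by minimality --- so the argument is circular.

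The repair is essentially the paper's proof. Rather than stopping at $Z^{n+2}$, go further up the complex: for any $m$, the cycle $C=\coker(d_T^{n+m-1})$ has the length-$m$ flat resolution $0\to T^n\to\cdots\to T^{n+m}\to C\to 0$, so $\fd_{R_\fp}C<\infty$. Choosing $m>\dim R_\fp$ and applying Corollary~\ref{FFD} forces $\fd_{R_\fp}C\leq\dim R_\fp<m$, whence $\Tor_m^{R_\fp}(\kappa,C)=0$; but minimality computes this $\Tor$ group as $\kappa\otimes T^n$, giving the desired vanishing. Note also that for the final implication $\kappa\otimes T^n=0\Rightarrow T^n=0$ you need $T^n$ to have cosupport in $\{\fp\}$, so the reduction should be to $\Lambda^\fp\Hom_R(R_\fp,Y)$ as the paper does, not merely to $\Hom_R(R_\fp,Y)$ (your $T^n=\prod_{\fq\subseteq\fp}T_\fq^n$ may be nonzero even when $\kappa(\fp)\otimes T^n=0$).
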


\begin{proof}
Suppose that $Y\neq 0$ in $\C(R)$ and deduce a contradiction. We can take a prime ideal $\fp\in \bigcup_{i\in\mathbb{Z}}\cosupp_RY^i$. Since $Y$ is acyclic, so is $\Lambda^\fp\Hom_R(R_\fp,Y)$ by (\ref{remove_derive}); this complex satisfies the same condition as $Y$, and so we may replace $Y$ by $\Lambda^{\fp}\Hom_R(R_\fp,Y)$. Thus we may assume $Y$ is a complex consisting of flat cotorsion modules with cosupport in $\{\fp\}$; in particular, $Y$ is an $R_\fp$-complex. Moreover, without loss of generality, we may assume that $Y^i=0$ for $i<0$ and $Y^0\neq 0$. 

Now, fix an integer $n>\dim R_\fp$ and consider the truncation
\[
Y'=(\xymatrix{\cdots \ar[r] & 0\ar[r] &  Y^0\ar[r] & \cdots\ar[r] &  Y^{n}\ar[r] & 0\ar[r] & \cdots}).
\]
Since $Y$ is acyclic, $Y'$ can be regarded as a flat resolution of $C=\coker(d_Y^{n-1})$ over $R_\fp$.  Minimality of $Y$ implies that $\kappa(\fp) \otimes_{R_\fp}Y$ has zero differential by Theorem \ref{criteria}, and hence $\kappa(\fp) \otimes_{R_\fp} Y'$ has zero differential as well.  Thus $\Tor_{n}^{R_\fp}(\kappa(\fp),C)\cong \kappa(\fp) \otimes_{R_\fp} Y^0\neq 0$.  This implies that $\dim R_\fp<n= \fd_{R_\fp} C<\infty$, contradicting that the finitistic flat dimension of $R_\fp$ is at most $n$; see for example Corollary \ref{FFD}.
\end{proof}

\begin{proof}[Proof of Theorem \ref{correct}]
Let $\fp\in \Spec R$, and assume that $\fp\notin \cosupp_RY$. By Lemma \ref{cosuppinclusion}, we only have to show that $\fp\notin \bigcup_{i\in \mathbb{Z}} \cosupp_R Y^i$.

Suppose that condition (1) holds. We then have by \eqref{derived fc_lambda and fc_coloc} that
$$\kappa(\fp)\Lotimes_R\RHom_R(R_\fp, Y)\cong \kappa(\fp)\Lotimes_R\Hom_R(R_\fp, Y)\cong \kappa(\fp)\otimes_R\Hom_R(R_\fp, Y).$$
Minimality of $Y$ implies that $\kappa(\fp)\otimes_R\Hom_R(R_\fp, Y)$ has zero differential by Theorem \ref{criteria}. 
In addition, $\kappa(\fp)\otimes_R\Hom_R(R_\fp, Y)$ is acyclic by (\ref{cosupport characterization}) since $\fp\notin \cosupp_RY$.
It follows that $\kappa(\fp)\otimes_R\Hom_R(R_\fp, Y)=0$ in $\C(R)$. Hence $\fp\notin \bigcup_{i\in \mathbb{Z}} \cosupp_R Y^i$ by (\ref{fc_kappa}).

Next suppose that condition (2) holds.  Together, (\ref{cosupport characterization}) and (\ref{remove_derive}) yield that the complex $\Lambda^{\fp}\Hom_R(R_\fp, Y)$ is an acyclic complex of flat cotorsion modules. Further, as $Y^i=0$ for $i\ll0$, we also have $(\Lambda^{\fp}\Hom_R(R_\fp, Y))^i=0$ for $i\ll0$. Minimality of $Y$ implies $\Lambda^\fp\Hom_R(R_\fp,Y)$ is minimal, by Theorem \ref{criteria}, hence Lemma \ref{minimal left} yields $\Lambda^{\fp}\Hom_R(R_\fp, Y)=0$ in $\C(R)$, that is, $\fp\notin \bigcup_{i\in \mathbb{Z}} \cosupp_R Y^i$. 
\end{proof}

\begin{rmk}
Using finitistic injective dimension (see \cite[Theorem 2.4]{AB58} and \cite[Theorem 1]{Mat59}) it can also be shown that if $I$ is a minimal $R$-complex of injective modules with $I^i=0$ for $i\gg0$, then there is an equality $\supp_R I = \bigcup_{i\in \mathbb{Z}} \supp_R I^i$. Compare this with \cite[Proposition 2.1]{CI10}.
\end{rmk}

Our next task is to give an example of an $R$-complex $X$ whose minimal semi-flat-cotorsion replacement $Y$ satisfies $\cosupp_R X \subsetneq \bigcup_{i\in\mathbb{Z}}\cosupp_RY^i$. Although our example is analogous to \cite[Proposition 2.7]{CI10}, a key role is played by Construction \ref{Dim1orCountable} and the following result which gives a condition for this construction to yield a minimal complex.

\begin{lem}\label{const_min}
Assume $\dim R\leq 1$ or $R$ is countable. Let $P$ be a complex of projective $R$-modules such that $R/\fp\otimes_R P$ has zero differential for every minimal prime $\fp$. The complex $Y_P$ in Construction \ref{Dim1orCountable} is minimal.
\end{lem}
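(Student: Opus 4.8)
The plan is to verify minimality of $Y_P$ using the criterion $(1)\Leftrightarrow(3)$ of Theorem \ref{criteria}, i.e.\ by showing that $\kappa(\fp)\otimes_R\Hom_R(R_\fp,Y_P)$ has zero differential for every $\fp\in\Spec R$. The first step is to upgrade the hypothesis. Since $P^i$ and $P^{i+1}$ are projective, the complex $R/\fq\otimes_R P$ has zero differential precisely when $d_P^i(P^i)\subseteq\fq P^{i+1}$ for all $i$; imposing this for every minimal prime $\fq$ forces $d_P^i(P^i)\subseteq\bigcap_{\fq\text{ minimal}}\fq P^{i+1}=\nilrad(R)\cdot P^{i+1}$, and as $\nilrad(R)$ lies in every prime it follows that $\kappa(\fp)\otimes_R P$ has zero differential for \emph{every} $\fp\in\Spec R$. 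Next, recall from Construction \ref{Dim1orCountable} (where the hypothesis on $R$ is used) that $Y_P$ is the total complex of the two-column double complex $A\xrightarrow{q}B$, with $A=\prod_{\fm}\Lambda^\fm P$ (the product over maximal ideals $\fm$), $B=A/P$, and $q$ the canonical surjection. As $\Hom_R(R_\fp,-)$ and $\kappa(\fp)\otimes_R-$ are additive and commute with the finite direct sums occurring in this totalization, it suffices to show, for each $\fp$, that both $\kappa(\fp)\otimes_R\Hom_R(R_\fp,A)$ and $\kappa(\fp)\otimes_R\Hom_R(R_\fp,B)$ have zero differential and that the map between them induced by $q$ vanishes.

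For the term $A$: by \eqref{fc_coloc}, $\Hom_R(R_\fp,\Lambda^{\fm}P)$ equals $\Lambda^{\fm}P$ when $\fm=\fp$ (so $\fp$ is maximal) and vanishes otherwise, so (using that $\kappa(\fp)$ is finitely presented) $\kappa(\fp)\otimes_R\Hom_R(R_\fp,A)=0$ when $\fp$ is not maximal, while for $\fp=\fm$ maximal it is $\kappa(\fm)\otimes_R\Lambda^{\fm}P\cong\kappa(\fm)\otimes_R P$ via Remark \ref{classic_iso}; in either case this complex has zero differential by the first step. For the maximal case of the term $B$, I would first show $\kappa(\fm)\otimes_R B^i=0$ for every maximal $\fm$: tensoring $0\to P^i\to A^i\to B^i\to 0$ with $R/\fm$ and using that $R/\fm\otimes_R\Lambda^{\fm'}P^i=0$ for $\fm'\neq\fm$ (an element of $\fm\setminus\fm'$ acts invertibly on $\Lambda^{\fm'}P^i$) together with $R/\fm\otimes_R\Lambda^{\fm}P^i\cong P^i/\fm P^i$ (Remark \ref{classic_iso}), the map $P^i/\fm P^i\to A^i/\fm A^i$ induced by the completion $P^i\to\Lambda^{\fm}P^i$ is an isomorphism, forcing $B^i/\fm B^i=0$. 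Since $\Hom_R(R_\fm,B^i)$ is a direct summand of $B^i$ by \eqref{fc_coloc}, we get $\kappa(\fm)\otimes_R\Hom_R(R_\fm,B)=0$, which settles the maximal case (both the $B$-contribution and the connecting map vanish there).

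It remains to treat a non-maximal prime $\fp$. Here $\Hom_R(R_\fp,A)=0$ by \eqref{fc_coloc}, and $\Ext^1_R(R_\fp,A^i)=0$ since $A^i$ is cotorsion and $R_\fp$ is flat; thus the long exact sequence attached to $0\to P\to A\to B\to 0$ yields a natural isomorphism of complexes $\Hom_R(R_\fp,B)\cong\Ext^1_R(R_\fp,P)$, under which the differential becomes $\Ext^1_R(R_\fp,d_P^\bullet)$. By the first step, $d_P^i(P^i)\subseteq\fp P^{i+1}$; since $P^i$ is projective and $\fp=(a_1,\dots,a_r)$ is finitely generated, $d_P^i$ factors through the surjection $(P^{i+1})^{r}\twoheadrightarrow\fp P^{i+1}$ and so may be written as $d_P^i=\sum_{k}a_k g_k$ with $g_k\in\Hom_R(P^i,P^{i+1})$. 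As $\Ext^1_R(R_\fp,-)$ is $R$-linear, the image of $\Ext^1_R(R_\fp,d_P^i)$ lies in $\fp\cdot\Ext^1_R(R_\fp,P^{i+1})$, hence becomes zero after applying $\kappa(\fp)\otimes_R-$; therefore $\kappa(\fp)\otimes_R\Hom_R(R_\fp,B)$ has zero differential. Combining all cases, $\kappa(\fp)\otimes_R\Hom_R(R_\fp,Y_P)$ has zero differential for every $\fp\in\Spec R$, so $Y_P$ is minimal by Theorem \ref{criteria}.

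The step I expect to be most delicate is the non-maximal case: establishing the differential-compatible isomorphism $\Hom_R(R_\fp,B)\cong\Ext^1_R(R_\fp,P)$ and then controlling $\Ext^1_R(R_\fp,d_P^i)$. The factorization $d_P^i=\sum_k a_k g_k$ with $a_k\in\fp$, available because $P^i$ is projective, is the crux; it is worth being careful that $\Ext^1_R(R_\fp,-)$ being $R$-linear really does push the image of this map into $\fp\cdot\Ext^1_R(R_\fp,P^{i+1})$. A secondary point needing care is the bookkeeping with the structure of flat cotorsion modules required to see that $B$ has no maximal-ideal components, which is what makes the maximal case collapse.
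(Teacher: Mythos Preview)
Your proof is correct. Both you and the paper reduce to Theorem \ref{criteria}, upgrade the hypothesis on minimal primes to all primes (you do this at the outset via $\bigcap_{\fp\text{ min}}\fp\,P^{i+1}=\nilrad(R)\,P^{i+1}$, using flatness of $P^{i+1}$ and that $R$ is noetherian; the paper does it on the fly by writing $R/\fq\cong R/\fq\otimes_R R/\fp$ for a minimal $\fp\subseteq\fq$), and observe that for each prime one of the two columns of $Y_P$ vanishes after applying $\kappa(\fp)\otimes_R\Hom_R(R_\fp,-)$.

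The genuine divergence is in how you handle the surviving column $B=C$ for a non-maximal prime $\fp$. The paper stays on the ``tensor'' side: it applies $R/\fq\otimes_R-$ to the surjection $\prod_\fm\Lambda^\fm P\twoheadrightarrow C$, notes the source has zero differential (since $R/\fq\otimes_R P$ does), hence so does the quotient $R/\fq\otimes_R C$; then it invokes the commutation isomorphism \eqref{fc_tensor} to rewrite $\kappa(\fq)\otimes_R\Hom_R(R_\fq,C)\cong\Hom_R(R_\fq,R/\fq\otimes_R C)$. Your route is homological: from $\Hom_R(R_\fp,A^i)=0=\Ext^1_R(R_\fp,A^i)$ you extract a natural isomorphism $\Hom_R(R_\fp,B)\cong\Ext^1_R(R_\fp,P)$ carrying the differential to $\Ext^1_R(R_\fp,d_P^\bullet)$, and then kill the latter mod $\fp$ by factoring $d_P^i=\sum_k a_k g_k$ with $a_k\in\fp$ (available because $R$ is noetherian and $P^i$ is projective). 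This avoids the structural formula \eqref{fc_tensor} entirely and replaces it with the long exact sequence and $R$-linearity of $\Ext^1_R(R_\fp,-)$; the price is a slightly longer case analysis and the need to check naturality of the connecting map. The paper's argument is shorter and more uniform across the two cases, while yours is more self-contained, relying only on standard homological algebra rather than the fine structure of flat cotorsion modules.
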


\begin{proof}
It is enough to show that 
$\kappa(\fq)\otimes_R\Hom_R(R_\fq, Y_P)$ has zero differential for every prime ideal $\fq$ of $R$, by Theorem \ref{criteria}. Denote by $W$ the set of maximal ideals of $R$.  For $\fn\in W$, application of $\Lambda^{\fn}$ to the exact sequence
\[\xymatrix{
0\ar[r] &P\ar[r]& \prod_{\fm\in W} \Lambda^{\fm}P \ar[r] & (\prod_{\fm\in W} \Lambda^{\fm}P)/P\ar[r] & 0
}\]
preserves exactness per \cite[\S 4, p. 69]{Lip02} and sends the map $P\to \prod_{\fm\in W}\Lambda^\fm P$ to an isomorphism, see \eqref{fc_lambda}. It follows that the complex $C=(\prod_{\fm\in W} \Lambda^{\fm}P)/P$ consists of flat cotorsion modules with cosupport in $(\Spec R)\setminus W$, so by \eqref{fc_kappa},
\begin{align}
\kappa(\fq)\otimes_R\Hom_R(R_\fq,Y_P)\cong \begin{cases} R/\fq\otimes_R\Hom_R(R_\fq,\Lambda^{\fq}P)\text{, if $\fq\in W$,} \\
R/\fq\otimes_R\Hom_R(R_\fq,C[-1])\text{, if $\fq\not\in W$}.\end{cases}\label{cases}
\end{align}

Now fix $\fq\in \Spec R$. Application of $R/\fq\otimes_R-$ to the canonical surjection ${\prod_{\fm\in W}\Lambda^{\fm}P \to C}$ yields a surjective chain map
\begin{align}\label{surjective chain map}
\xymatrix{R/\fq\otimes_R\textstyle{\prod_{\fm\in W}}\Lambda^{\fm}P \ar[r] & R/\fq\otimes_RC}.
\end{align}
Moreover, it holds that
$$R/\fq\otimes_R\textstyle{\prod_{\fm\in W}} \Lambda^{\fm}P\cong \textstyle{\prod_{\fm\in W}} (R/\fq\otimes_R\Lambda^{\fm}P)\cong \textstyle{\prod_{\fm\in W}} \Lambda^{\fm}(R/\fq\otimes_RP),$$
see \cite[Lemma 2.3]{NY18} for the second isomorphism.
Taking a minimal prime ideal $\fp$ with $\fp \subseteq \fq$, we have $R/\fq\otimes_RP\cong R/\fq \otimes_R R/\fp\otimes_RP$, therefore the assumption on $P$ implies that $\Lambda^{\fm}(R/\fq\otimes_RP)$ has zero differential, and so does $\prod_{\fm\in W} \Lambda^{\fm}(R/\fq\otimes_RP)$.
Thus both complexes appearing in \eqref{surjective chain map} have zero differential.

To complete the proof, apply $\Hom_R(R_\fq,- )$ to \eqref{surjective chain map}:
$$\Hom_R(R_\fq, R/\fq\otimes_R\textstyle{\prod_{\fm\in W}}\Lambda^{\fm}P) \to \Hom_R(R_\fq, R/\fq\otimes_RC),
$$
where the both complexes have zero differential.
Regarding this chain map as a double complex, we see from \eqref{fc_tensor} that its total complex  is nothing but the complex $\kappa(\fq)\otimes_R\Hom_R(R_\fq,Y_P)$.
Going back to \eqref{cases}, we conclude that $Y_P$ is minimal.
\end{proof}

We now give a counterexample to \cite[Theorem 2.7]{Tho18cosupp}.

\begin{exa}\label{counterexample}
Let $k$ be a field and $R=k[[x,y]]/(x^2)$.  Set $\fm=(x,y)$ and $\fp=(x)$. We construct an $R$-complex $X$ and a semi-flat-cotorsion replacement $Y_P$ of $X$ such that $\cosupp_R X=\{\fm\}$ and $\bigcup_{i\in \ZZ} \cosupp_R Y_P^i = \{\fp,\fm\}$. Let $M=R/(x)$. As $R$ is $\fm$-adically complete, $R$ is flat cotorsion hence the complex
$$F=(\xymatrix{\cdots \ar[r]^{x} & R\ar[r]^{x} & R\ar[r]^{x} & R\ar[r] & 0})$$
is a minimal resolution of $M$ by flat cotorsion $R$-modules.
Theorem \ref{correct} implies $\cosupp_RM=\{\fm\}$. 
We set $P=\bigoplus_{i\in \ZZ}F[i]$ and $X=\bigoplus_{i\in \ZZ}M[i]$. The quasi-isomorphism $F\xrightarrow{} M$ induces a quasi-isomorphism $P\xrightarrow{} X$. Furthermore, since each $F[i]$ is semi-flat, one obtains that $P$ is also semi-flat.

The differential of $P$ is given by multiplication by $x$, hence $R/\fp\otimes_R P$ has zero differential. Construction \ref{Dim1orCountable} applied to the complex $P$ yields a quasi-isomorphism $P\to Y_P$ with pure acyclic mapping cone, where $Y_P$ is semi-flat-cotorsion by construction and minimal by Lemma \ref{const_min}. 
Furthermore, for each $n\in \ZZ$, one has $P^n\cong \bigoplus_{\NN} R$. Since $\dim R>0$, the direct sum $\bigoplus_{\NN} R$ is not isomorphic to its $\fm$-adic completion $\Lambda^{\fm}(\bigoplus_{\NN} R)$. Thus the quotient module $\Lambda^{\fm}(\bigoplus_{\NN} R)/(\bigoplus_{\NN} R)$ is non-trivial, and we see from the proof of Lemma \ref{const_min} that this is a flat cotorsion module with cosupport in $\{\fp\}$. 
Therefore, the minimal semi-flat-cotorsion replacement $Y_P$ of $X$ contains non-trivial flat cotorsion modules with cosupport in both $\{\fm\}$ and $\{\fp\}$. In other words,  $\bigcup_{i\in \ZZ} \cosupp_RY^i_P=\{\fp, \fm\}$.

However, we claim $\cosupp_RX=\{\fm\}$. Indeed, there is an isomorphism of complexes $X\cong \prod_{i\in \ZZ}M[i]$, and hence $\cosupp_RX=\bigcup_{i\in \ZZ} \cosupp_R M[i]=\{\fm\}.$
Consequently we have 
$$\cosupp_RX=\cosupp_R Y_P\subsetneq \bigcup_{i\in \ZZ} \cosupp_RY^i_P.$$

We further point out that $\Hom_R(R_\fp,-)$ may not preserve semi-flatness for complexes of flat cotorsion modules. Indeed, although $Y_P$ is semi-flat, if $\Hom_R(R_\fp,Y_P)$ was also semi-flat then the above strict containment would contradict the conclusion of Theorem \ref{correct}.
\end{exa}

\appendix
\section{Semi-flat-cotorsion replacements for associative rings}\label{appendix_sfc}
\noindent
In this appendix, let $A$ be an associative ring with identity. Here, left $A$-modules and complexes of left $A$-modules are simply referred to as $A$-modules and $A$-complexes, respectively.
Let $\C(A)$ denote the category of $A$-complexes, $\K(A)$ the homotopy category of $A$-complexes, and $\D(A)$ the derived category over $A$.
We first recall what it means for an $A$-complex $X$ to be semi-projective, semi-injective, or semi-flat;\footnote{In the literature, the prefix ``DG-'' is also used in place of ``semi-''. We follow notation of \cite{AFH}.} these have assumptions on the components of $X$ in addition to being $K$-projective, $K$-injective, or $K$-flat in the sense of Spaltenstein \cite{Spa88}.
\begin{itemize}
\item $X$ is {\em semi-projective} if $\Hom_A(X,-)$ preserves acyclicity and $X^i$ is projective for every $i\in \ZZ$. 
\item $X$ is {\em semi-injective} if $\Hom_A(-,X)$ preserves acyclicity and $X^i$ is injective for every $i\in \ZZ$.
\item $X$ is {\em semi-flat} if $-\otimes_AX$ preserves acyclicity and $X^i$ is flat for every $i\in \ZZ$.  
\end{itemize}

An $A$-module $C$ is called \emph{cotorsion} if $\Ext_A^1(F,C)=0$ for all flat $A$-modules $F$. There is a natural corresponding notion---see Enochs and Garc\'{i}a Rozas \cite[Definition 3.3 and Proposition 3.4]{EGR98}---for a complex of cotorsion $A$-modules as well: an $A$-complex $X$ is {\em semi-cotorsion} if $\Hom_A(-,X)$ preserves acyclicity of pure acyclic complexes of flat $A$-modules and $X^i$ is cotorsion for every $i\in \ZZ$; recall that a complex of flat $A$-modules is pure acyclic if and only if it is acyclic and semi-flat. A standard argument shows that an $A$-complex $C$ such that $C^i$ is cotorsion for all $i\in \ZZ$ and $C^i=0$ for $i\ll0$ is semi-cotorsion.

For a semi-flat $A$-complex $F$ and semi-cotorsion $A$-complex $C$, we have an isomorphism in $\D(A)$:
\begin{align}\label{HomKD2}
\RHom_A(F,C)\cong\Hom_A(F,C);
\end{align}
this follows by noting that the mapping cone of a semi-projective resolution $P\to F$ is pure acyclic. It then follows from \eqref{HomKD2} that
\begin{align}\label{HomKD}
\Hom_{\D(A)}(F,C)\cong\Hom_{\K(A)}(F,C).
\end{align}
In particular, a morphism in $\D(A)$ between $A$-complexes that are both semi-flat and semi-cotorsion can be realized by a morphism in $\K(A)$. This naturally leads us to make the next definition:

\begin{dfn}\label{dfn_sfc}
An $A$-complex $X$ is \emph{semi-flat-cotorsion} if $-\otimes_AX$ preserves acyclicity, $\Hom_A(-,X)$ preserves acyclicity of pure acyclic complexes of flat modules, and $X^i$ is flat cotorsion for every $i\in \ZZ$.
\end{dfn}
\noindent 
In other words, an $A$-complex is semi-flat-cotorsion if and only if it is semi-flat and semi-cotorsion.  

Recall that an $A$-complex $X$ is said to be \emph{minimal} if every homotopy equivalence $X\to X$ is an isomorphism in $\C(A)$; see \cite{AM02}. The next lemma follows from \eqref{HomKD} and the definition of minimality.
\begin{lem}\label{sfc_unique}
Let $X$ and $Y$ be minimal semi-flat-cotorsion complexes that are isomorphic in $\D(A)$. Then $X\cong Y$ in $\C(A)$.
\end{lem}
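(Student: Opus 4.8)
The plan is to use the isomorphism \eqref{HomKD} to transfer the given isomorphism in $\D(A)$ into an honest homotopy equivalence in $\K(A)$, and then exploit minimality on both sides. First I would let $\varphi\colon X\to Y$ be an isomorphism in $\D(A)$; since $X$ is semi-flat and $Y$ is semi-cotorsion, \eqref{HomKD} gives a chain map $f\colon X\to Y$ representing $\varphi$, and since $Y$ is semi-flat and $X$ is semi-cotorsion, \eqref{HomKD} applied to $\varphi^{-1}$ gives a chain map $g\colon Y\to X$ representing it. The composites $gf$ and $fg$ represent the identities in $\D(A)$, hence, applying \eqref{HomKD} once more (with $X$ or $Y$ in both slots, which are simultaneously semi-flat and semi-cotorsion), $gf$ is homotopic to $\id_X$ and $fg$ is homotopic to $\id_Y$. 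Thus $f$ is a homotopy equivalence.

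Next I would invoke minimality: $gf\colon X\to X$ is a homotopy equivalence, so by minimality of $X$ it is an isomorphism in $\C(A)$; likewise $fg\colon Y\to Y$ is an isomorphism in $\C(A)$ by minimality of $Y$. From $gf$ being invertible, $f$ is a split monomorphism in $\C(A)$; from $fg$ being invertible, $f$ is a split epimorphism in $\C(A)$. A chain map that is both a split mono and a split epi is an isomorphism, so $f\colon X\to Y$ is an isomorphism in $\C(A)$, as claimed.

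I do not expect a genuine obstacle here: the only subtlety is making sure \eqref{HomKD} is applied in the three situations (to $f$, to $g$, and to the homotopy class of $gf$ or $fg$), each of which requires the source to be semi-flat and the target to be semi-cotorsion — all satisfied since $X$ and $Y$ are semi-flat-cotorsion. The passage from "$gf$ and $fg$ invertible in $\C(A)$" to "$f$ invertible in $\C(A)$" is the standard two-out-of-six style argument and needs no special hypotheses.
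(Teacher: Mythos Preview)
Your proposal is correct and is precisely the intended argument: the paper only records that the lemma ``follows from \eqref{HomKD} and the definition of minimality,'' and your write-up is the natural unpacking of that hint. The lift of $\varphi$ and $\varphi^{-1}$ to chain maps via \eqref{HomKD}, the resulting homotopy equivalence, and the split mono/split epi argument from minimality of $X$ and $Y$ are exactly what is needed.
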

\noindent
In particular, the zero complex is the only acyclic minimal semi-flat-cotorsion complex.

Although every complex has a semi-flat resolution, not every complex has a semi-flat-cotorsion resolution (see Example \ref{no_res}); instead, we consider the following natural notion:
\begin{dfn}\label{sfcreplacement}
A \emph{semi-flat-cotorsion replacement} of an $A$-complex $X$ is an isomorphism in $\D(A)$ between $X$ and a semi-flat-cotorsion $A$-complex.
\end{dfn}

The next result is due to Gillespie \cite{Gil04}.

\begin{thm}\label{Gillespie_exist}
Every $A$-complex $X$ has a semi-flat-cotorsion replacement. 
\end{thm}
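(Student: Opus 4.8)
The plan is to invoke Gillespie's cotorsion pair on chain complexes and extract the replacement from the corresponding complete cotorsion pair. By \cite{Gil04}, the class of pure acyclic complexes of flat $A$-modules and the class of semi-cotorsion complexes form a complete cotorsion pair in $\C(A)$; more precisely, Gillespie shows that the flat model structure on $\C(A)$ has as its cofibrant objects the semi-flat complexes, as its fibrant objects the (dg-)cotorsion complexes, and the acyclic objects in each of these classes are precisely the pure acyclic flat complexes and the semi-cotorsion acyclic complexes respectively. The key point is that the trivially-fibrant objects together with the cofibrant objects give a complete cotorsion pair whose left class consists of semi-flat complexes and whose right class contains the semi-cotorsion complexes.

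First I would start with an arbitrary $A$-complex $X$ and take a semi-flat resolution $F \xrightarrow{\simeq} X$, which exists over any associative ring by Spaltenstein \cite{Spa88}. This reduces the problem to producing, for a semi-flat complex $F$, a quasi-isomorphism $F \to Y$ with $Y$ semi-flat-cotorsion; composing with $F \xrightarrow{\simeq} X$ then yields the desired isomorphism in $\D(A)$. Second, I would apply completeness of the cotorsion pair (pure acyclic flat complexes, semi-cotorsion complexes) to $F$: there is a short exact sequence $0 \to F \to Y \to P \to 0$ in $\C(A)$ with $Y$ semi-cotorsion and $P$ a pure acyclic complex of flat modules. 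Since $P$ is in particular acyclic, the map $F \to Y$ is a quasi-isomorphism. It remains to check that $Y$ is also semi-flat; this follows because $F$ is semi-flat, $P$ is semi-flat (pure acyclic complexes of flat modules are semi-flat), and semi-flat complexes are closed under extensions in $\C(A)$. Moreover each $Y^i$ sits in a short exact sequence $0 \to F^i \to Y^i \to P^i \to 0$ of flat modules with $Y^i$ cotorsion, so $Y^i$ is flat cotorsion. Hence $Y$ is semi-flat and semi-cotorsion, i.e.\ semi-flat-cotorsion per Definition \ref{dfn_sfc}, and $F \to Y$ is the required quasi-isomorphism.

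The main obstacle is simply citing Gillespie's result in the precise form needed: one must confirm that the relevant cotorsion pair in $\C(A)$ is \emph{complete} (so that the approximating short exact sequence $0 \to F \to Y \to P \to 0$ actually exists) and that its right-hand class, the complexes right Ext-orthogonal to pure acyclic flat complexes, coincides with (or is contained in) the semi-cotorsion complexes whose components are cotorsion. Gillespie establishes exactly this via the theory of hereditary abelian model structures induced by a cotorsion pair on the module category; the flat cotorsion pair $(\Flat{A}, \mathsf{Cot}(A))$ on $\Mod A$ is complete by Bican--El Bashir--Enochs \cite{BEBE01}, and Gillespie lifts it to a complete cotorsion pair on $\C(A)$. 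Once this is in hand, the argument above is routine, using only closure of semi-flat complexes under extensions and the two-out-of-three property for quasi-isomorphisms.
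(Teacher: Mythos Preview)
Your proposal is correct and follows essentially the same route as the paper: take a semi-flat resolution $F\to X$, apply completeness of Gillespie's cotorsion pair (pure acyclic flat complexes, semi-cotorsion complexes) to obtain $0\to F\to Y\to P\to 0$ with $Y$ semi-cotorsion and $P$ pure acyclic flat, and conclude $Y$ is semi-flat since $F$ and $P$ are. The paper's proof is slightly terser but the argument is the same.
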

\begin{proof}
Let $F\xrightarrow{} X$ be a semi-flat resolution. By \cite[Corollary 4.10]{Gil04}\footnote{Although the result is stated for commutative rings, it is well-known that Gillespie's argument holds without this assumption; see also \cite{YL11}.}, the pair of pure acyclic complexes of flat modules and semi-cotorsion complexes forms a complete cotorsion pair on the category of $A$-complexes; in particular, this implies there is an exact sequence of $A$-complexes,
$$0\to F\to Y\to P\to 0,$$
where $Y$ is semi-cotorsion and $P$ is a pure acyclic complex of flat modules. The complex $P$ is semi-flat, hence $Y$ is semi-flat as well. It now follows that $Y$ is a semi-flat-cotorsion replacement of $X$.
\end{proof}

Analogous to the roles of semi-projective complexes and semi-injective complexes, \eqref{HomKD} and Theorem \ref{Gillespie_exist} show that semi-flat-cotorsion complexes also describe the derived category:
\begin{cor}
The homotopy category of semi-flat-cotorsion $A$-complexes is equivalent to $\D(A)$.
\end{cor}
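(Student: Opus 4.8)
The plan is to introduce the full subcategory $\K_{\mathrm{sfc}}(A)$ of the homotopy category $\K(A)$ whose objects are the semi-flat-cotorsion $A$-complexes, and to verify that the functor
\[
\K_{\mathrm{sfc}}(A)\longrightarrow \D(A)
\]
obtained by composing the inclusion $\K_{\mathrm{sfc}}(A)\hookrightarrow \K(A)$ with the canonical localization functor $\K(A)\to \D(A)$ is an equivalence of categories. It suffices to check that this functor is essentially surjective and fully faithful, mirroring the standard arguments for semi-projective or semi-injective complexes.

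Essential surjectivity is immediate from Theorem \ref{Gillespie_exist}: each $A$-complex $X$ admits an isomorphism $X\cong Y$ in $\D(A)$ with $Y$ semi-flat-cotorsion, so every object of $\D(A)$ lies in the essential image of the functor.

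For full faithfulness, fix semi-flat-cotorsion $A$-complexes $F$ and $C$. Since $F$ is in particular semi-flat and $C$ is in particular semi-cotorsion, \eqref{HomKD} asserts that the comparison map
\[
\Hom_{\K(A)}(F,C)\longrightarrow \Hom_{\D(A)}(F,C)
\]
induced by the localization functor is bijective; restricted to $\K_{\mathrm{sfc}}(A)$, this is precisely the map on morphism sets induced by the functor above. Hence the functor is fully faithful, and combined with essential surjectivity it is an equivalence.

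The argument is essentially formal once \eqref{HomKD} and Theorem \ref{Gillespie_exist} are in hand, so there is no genuine obstacle; the only point meriting a word of care is that the isomorphism in \eqref{HomKD} is realized by the canonical comparison map, which is built into its derivation from \eqref{HomKD2}---namely, for a semi-projective resolution $P\to F$ the mapping cone is pure acyclic, so $\Hom_A(F,C)\to \Hom_A(P,C)$ is a quasi-isomorphism computing $\RHom_A(F,C)$ compatibly with passage to $\D(A)$.
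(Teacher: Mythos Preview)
Your proof is correct and follows exactly the approach the paper indicates: the paper does not spell out a proof but simply notes that the corollary follows from \eqref{HomKD} (full faithfulness) and Theorem \ref{Gillespie_exist} (essential surjectivity), which is precisely what you have unpacked.
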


{\v{S}}{\v{t}}ov{\'{\i}}{\v{c}}ek \cite[Theorem 5.4]{Sto14} shows that every complex of flat cotorsion $A$-modules is semi-cotorsion; in fact, a recent result of Bazzoni, Cort\'{e}s-Izurdiaga, and Estrada shows that every complex of cotorsion $A$-modules is semi-cotorsion \cite[Theorem 1.3]{BIE19}.  We now have the following characterization: 

\begin{lem}\label{semiflat_cot}
An $A$-complex $X$ is semi-flat-cotorsion if and only if $X$ is semi-flat and $X^i$ is cotorsion for every $i\in \ZZ$.
\end{lem}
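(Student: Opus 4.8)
The plan is to prove the equivalence by establishing that the only nontrivial direction is that a semi-flat complex $X$ whose components $X^i$ are cotorsion (a priori only cotorsion, not necessarily flat) actually has flat cotorsion components, and then invoke the results of {\v{S}}{\v{t}}ov{\'{\i}}{\v{c}}ek and of Bazzoni--Cort\'{e}s-Izurdiaga--Estrada for the semi-cotorsion conclusion. The forward implication is immediate from Definition \ref{dfn_sfc}: a semi-flat-cotorsion complex is in particular semi-flat with flat cotorsion---hence cotorsion---components. For the reverse, suppose $X$ is semi-flat and each $X^i$ is cotorsion; since $X$ is semi-flat, each $X^i$ is flat by definition, and therefore each $X^i$ is flat cotorsion. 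Thus it remains only to see that $X$ is semi-cotorsion, which is exactly the content of \cite[Theorem 5.4]{Sto14} (or \cite[Theorem 1.3]{BIE19}): every complex of cotorsion $A$-modules---in particular every complex of flat cotorsion $A$-modules---is semi-cotorsion. Then $X$ is both semi-flat and semi-cotorsion, so $X$ is semi-flat-cotorsion.

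The only subtlety worth spelling out is the interplay between the two notions of ``cotorsion components'': the hypothesis says $X^i$ is cotorsion, while Definition \ref{dfn_sfc} requires $X^i$ to be flat cotorsion. This gap is closed precisely because semi-flatness of $X$ already forces each $X^i$ to be flat, so no additional argument is needed beyond unwinding the definitions. I would state this explicitly so that the reader sees why the weaker-looking hypothesis suffices.

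I do not anticipate a genuine obstacle here; the lemma is essentially a repackaging of the definition of semi-flat-cotorsion together with the cited theorems that ``complex of cotorsion modules $\Rightarrow$ semi-cotorsion''. If one wanted a proof not relying on those deep results, one could only do so under extra hypotheses on $A$ (e.g. finite Krull dimension in the commutative noetherian case, as in Remark \ref{semi-cotorsion property}), but as stated the lemma is meant to hold for arbitrary associative $A$, so the appeal to \cite{Sto14} or \cite{BIE19} is the natural and intended route.

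\begin{proof}
If $X$ is semi-flat-cotorsion, then by Definition \ref{dfn_sfc} the complex $X$ is semi-flat and each $X^i$ is flat cotorsion, in particular cotorsion. Conversely, assume $X$ is semi-flat and $X^i$ is cotorsion for every $i\in \ZZ$. Since $X$ is semi-flat, each $X^i$ is flat; hence each $X^i$ is flat cotorsion. In particular, $X$ is a complex of cotorsion $A$-modules, so $X$ is semi-cotorsion by \cite[Theorem 5.4]{Sto14} (or \cite[Theorem 1.3]{BIE19}). Being both semi-flat and semi-cotorsion, $X$ is semi-flat-cotorsion.
\end{proof}
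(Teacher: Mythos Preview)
Your proof is correct and follows essentially the same approach as the paper: the forward direction is immediate from the definition, and the converse reduces to the cited results of {\v{S}}{\v{t}}ov{\'{\i}}{\v{c}}ek and Bazzoni--Cort\'{e}s-Izurdiaga--Estrada. Your added remark that semi-flatness forces each $X^i$ to be flat (so the hypothesis ``$X^i$ cotorsion'' upgrades automatically to ``$X^i$ flat cotorsion'') makes explicit a point the paper leaves implicit, but the argument is the same.
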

\begin{proof}
The forward implication is trivial. The converse is by \cite[Theorem 5.4]{Sto14} or \cite[Theorem 1.3]{BIE19}.
\end{proof}

As a consequence of this lemma, a complex $X$ of flat cotorsion $A$-modules such that $X^i=0$ for $i\gg0$ is semi-flat-cotorsion.

\begin{rmk}
Another important role is played by complexes of flat cotorsion modules; they describe the pure derived category of flat modules, as defined by Murfet and Salarian \cite{MS11}, whose work was motivated by Neeman \cite{Nee08}. The pure derived category is defined as the Verdier quotient of the homotopy category of complexes of flat $A$-modules by the subcategory of pure acyclic complexes. Gillespie's result \cite[Corollary 4.10]{Gil04} implies that the pure derived category may be identified with a subcategory of the homotopy category of complexes of flat cotorsion $A$-modules, see also \cite[Lemma 5.1 and Theorem 6.6]{Gil16}. If any flat $A$-module has finite projective dimension, as over a commutative noetherian ring of finite Krull dimension, then it is not hard to see (without using Lemma \ref{semiflat_cot}) that any pure acyclic complex of flat cotorsion $A$-modules is contractible; this assumption implies that the pure derived category coincides with the homotopy category of complexes of flat cotorsion $A$-modules. Indeed, these two categories are equivalent over any ring by \cite[Corollary 5.8]{Sto14} or \cite[Theorem 1.3]{BIE19}.
\end{rmk}

By Corollary \ref{mindecomposition}, over a commutative noetherian ring of finite Krull dimension, each complex of flat cotorsion modules decomposes as a direct sum of a minimal one and a contractible one. However, we do not know whether this holds in general.

\begin{qst}\label{appendix question0}
Let $Y$ be a complex of flat cotorsion $A$-modules. Does $Y$ decompose as a direct sum $Y'\oplus Y''$, where $Y'$ is minimal and $Y''$ is contractible?  
\end{qst}

For special complexes, this question has an affirmative answer.

\begin{prp}\label{specialcase}
Let $F$ be a complex of flat $A$-modules and assume the canonical surjections $F^{i}\to \im(d_F^{i})$ and $F^{i+1}\to \coker(d_F^{i})$ are flat precovers for all $i\in \ZZ$. Then $F=F'\oplus F''$, where $F'$ is minimal and $F''$ is contractible. 

In particular, if $Y$ is an acyclic complex of flat cotorsion $A$-modules, then we have $Y=Y'\oplus Y''$, where $Y'$ is minimal and $Y''$ is contractible.
\end{prp}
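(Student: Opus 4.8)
The plan is to break the statement into two parts: first, peel off a contractible direct summand so that the remaining complex has \emph{honest} flat covers where the hypotheses only supply flat precovers; second, show that a complex of flat modules all of whose canonical surjections onto images and onto cokernels are flat covers is minimal. The basic device for the first part is that, since flat covers exist over any ring \cite{BEBE01}, any flat precover $\pi\colon P\twoheadrightarrow M$ splits as $P=C\oplus P_{0}$ with $\pi|_{C}$ a flat cover and $\pi|_{P_{0}}=0$; one proves this by factoring the flat cover through $\pi$ and $\pi$ through the flat cover and invoking the defining superfluity of covers, which is the same mechanism as in Lemma \ref{nakayama}(2).

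First I would apply this device to each surjection $F^{i+1}\to\coker(d_{F}^{i})$, writing $F^{i+1}=P^{i+1}\oplus Q^{i+1}$ with $P^{i+1}\to\coker(d_{F}^{i})$ a flat cover and $Q^{i+1}\subseteq\im(d_{F}^{i})$; then, using that $F^{i}\to\im(d_{F}^{i})$ is a flat precover, I would lift the inclusion $Q^{i+1}\hookrightarrow\im(d_{F}^{i})$ to a split monomorphism $j^{i+1}\colon Q^{i+1}\to F^{i}$ splitting $d_{F}^{i}$ over $Q^{i+1}$, so that the submodule $j^{i+1}(Q^{i+1})$ in degree $i$ together with $Q^{i+1}$ in degree $i+1$ spans a contractible subcomplex $0\to Q^{i+1}\xrightarrow{\ \cong\ }Q^{i+1}\to 0$ of $F$. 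The main obstacle is to organize all of these pieces into a single contractible direct summand $F''$ together with a complementary subcomplex $F'$: in each degree $i$ the two submodules $j^{i+1}(Q^{i+1})$ and $Q^{i}$ must be shown to span a direct summand of $F^{i}$ jointly and compatibly with the differential. They at least meet trivially, since $j^{i+1}(Q^{i+1})\cap\ker(d_{F}^{i})=0$ while $Q^{i}\subseteq\im(d_{F}^{i-1})\subseteq\ker(d_{F}^{i})$. I would handle the bookkeeping by first splitting off all the summands $Q^{i+1}$ simultaneously, reducing to the case where $F^{i+1}\to\coker(d_{F}^{i})$ is a flat cover for every $i$ (which only modifies the differentials), and then absorbing the residual redundancy in the image direction one degree at a time, passing to a colimit when $F$ is unbounded; making this precise, so that $F''$ is genuinely a direct summand and not merely a subcomplex with a graded complement, is the heart of the argument.

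Once $F=F'\oplus F''$ with $F''$ contractible and every canonical surjection from $F'$ in degree $i$ onto $\im(d_{F'}^{i})$ and onto $\coker(d_{F'}^{i})$ an honest flat cover, I would deduce that $F'$ is minimal: given a self-homotopy-equivalence $h$ of $F'$ with homotopy inverse $h'$, the identity $h'h=\id_{F'}+d_{F'}s+sd_{F'}$ becomes, after reduction modulo $\im(d_{F'}^{i-1})$, a relation compatible with the canonical flat cover of $\coker(d_{F'}^{i-1})$, since the term $d_{F'}^{i-1}s^{i}$ dies there; a standard argument with the superfluity of flat covers then forces $h$ to be an isomorphism in every degree, i.e., an isomorphism in $\C(A)$ — this is the principle already underlying \cite[Theorem 4.1]{Tho19min} and \cite[Proposition 8.5.26]{EJ00}; compare Theorem \ref{criteria}. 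Finally, the ``in particular'' clause is the special case of an acyclic complex $Y$ of flat cotorsion modules: there one has $\coker(d_{Y}^{i})\cong\im(d_{Y}^{i+1})$, and the cycle modules $\ker(d_{Y}^{i})$ are cotorsion because every complex of flat cotorsion modules is semi-cotorsion \cite[Theorem 5.4]{Sto14} (or \cite[Theorem 1.3]{BIE19}); hence the exact sequences $0\to\ker(d_{Y}^{i})\to Y^{i}\to\im(d_{Y}^{i})\to 0$ exhibit $Y^{i}\to\im(d_{Y}^{i})$ as a flat precover, so $Y$ meets the hypotheses of the proposition.
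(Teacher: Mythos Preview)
Your approach is essentially the paper's: split each flat precover $F^{i+1}\to\coker(d_F^i)$ as a flat cover plus a redundant summand $Q^{i+1}\subseteq\im(d_F^i)$, lift $Q^{i+1}$ into $F^i$ via the precover $F^i\to\im(d_F^i)$ to obtain contractible pieces $Q(i)$, set $F''=\bigoplus_i Q(i)$, and invoke \cite[Theorem 4.1]{Tho19min} for minimality of the quotient; the ``in particular'' clause is handled identically via \cite{Sto14} or \cite{BIE19}.

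The one place you overcomplicate matters is the bookkeeping you call ``the heart of the argument.'' No colimit or two-stage reduction is needed. Your own observations already yield degreewise splitness: since $d_F^i\circ j^{i+1}$ is the split inclusion $Q^{i+1}\hookrightarrow F^{i+1}$ and $d_F^i$ annihilates $Q^i\subseteq\ker d_F^i$, the composite $Q^{i+1}\xrightarrow{j^{i+1}}F^i\xrightarrow{d_F^i}F^{i+1}\twoheadrightarrow Q^{i+1}$ is the identity, so $(j^{i+1},\iota)\colon Q^{i+1}\oplus Q^i\to F^i$ is split injective. A degreewise split short exact sequence $0\to C\to F\to F'\to 0$ with $C$ contractible then splits in $\C(A)$, because its extension class lies in $\Hom_{\K(A)}(F',C[1])=0$. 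This is precisely what the paper's ``one can further check'' abbreviates, and the resulting $F'$ has $(F')^{i+1}\to\coker(d_{F'}^i)$ a flat cover by construction --- note that only the cokernel covers are needed for \cite[Theorem 4.1]{Tho19min}, not the image covers.
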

\begin{proof}
For each $i\in \ZZ$, consider the exact sequence 
\[\xymatrix{
0\ar[r] & \im(d_F^{i}) \ar[r] & F^{i+1} \ar[r] & \coker(d_F^{i}) \ar[r] & 0,
}\]
and take a flat cover $P^{i+1}\to \coker(d_F^{i})$. The flat precover $F^{i+1}\to \coker(d_F^{i})$ factors through this flat cover via some split surjection $F^{i+1}\to P^{i+1}$, hence we may write $F^{i+1}=P^{i+1}\oplus Q^{i+1}$.
Observe from the exact sequence that the injection $Q^{i+1}\to F^{i+1}$ lifts to an injection $Q^{i+1}\to \im(d_F^{i})$, which also splits. Using the flat precover $F^{i}\to \im(d_F^{i})$ and the exact sequence, we can give a split injection from the contractible complex $Q(i)=(0\to Q^{i+1}\xrightarrow{=}Q^{i+1}\to 0)$ into $F$, where $Q(i)$ is concentrated in degrees $i$ and $i+1$. 
The split injections $Q(i)\to F$ for each $i\in \mathbb{Z}$ induce a map $\bigoplus_{i\in \ZZ} Q(i)\to F$, which one can further check is a split injection, and whose quotient $F'$ has the property that the canonical surjection $(F')^{i+1}\to \coker(d^{i}_{F'})$ is a flat cover for each $i\in \mathbb{Z}$ by construction. Thus $F=F'\oplus F''$ where $F''=\bigoplus_{i\in \ZZ} Q(i)$ is evidently contractible and $F'$ is minimal by \cite[Theorem 4.1]{Tho19min}\footnote{Although \cite[Theorem 4.1]{Tho19min} implicitly assumes the ring is commutative noetherian, this assumption is not needed in the proof.}.

The second assertion follows from the first along with \cite[Theorem 5.4]{Sto14} or \cite[Theorem 1.3]{BIE19}.
\end{proof}

When $A$ is a left perfect ring, the flat modules are the projective modules, hence every surjection from a flat module is a flat precover.  In this case, Proposition \ref{specialcase} provides an affirmative answer to Question \ref{appendix question0}; its proof is modelled on an argument dual to \cite[Appendix B]{Kra05}.

Complementary to the last statement of Proposition \ref{specialcase}, one may also consider a restriction of Question \ref{appendix question0} to the case of semi-flat-cotorsion complexes. Solving the restricted question is thus equivalent to showing the existence of minimal semi-flat-cotorsion replacements for all complexes, by \eqref{HomKD} and the definition of minimality.

\section*{Acknowledgments}
\noindent
This work was finished during the second author's visit to the University of Verona, with support provided by the Norwegian University of Science and Technology.  The first author was supported by the Program Ricerca di Base 2015 of the University of Verona. The authors would like to thank Sergio Estrada, Srikanth Iyengar, Jan {\v{S}}{\v{t}}ov{\'{\i}}{\v{c}}ek, Ryo Takahashi, and Siamak Yassemi for helpful comments and conversations regarding the topics of this paper.


\end{document}